\newcommand{\quiversize}{11pt}
\newtheorem{thm}{Theorem}[section]
\newtheorem{prop}[thm]{Proposition}
\newtheorem{lemma}[thm]{Lemma}
\newtheorem{corollary}[thm]{Corollary}
\newtheorem*{thm*}{Theorem}
\theoremstyle{definition}
\newtheorem{definition}[thm]{Definition}
\newtheorem{construction}[thm]{Construction}
\theoremstyle{remark}
\newtheorem{remark}[thm]{Remark}
\numberwithin{equation}{section}
\newcommand{\calc}{\mathcal{C}}
\newcommand{\cals}{\mathcal{S}}
\newcommand{\cald}{\mathcal{D}}
\newcommand{\za}{\alpha}
\newcommand{\zb}{\beta}
\newcommand{\zd}{\delta}
\newcommand{\zD}{\Delta}
\newcommand{\ze}{\epsilon}
\newcommand{\zg}{\gamma}
\newcommand{\zG}{\Gamma}
\newcommand{\zl}{\lambda}
\newcommand{\zs}{\sigma}
\newcommand{\zO}{\Omega}
\newcommand{\kb}{\mathbbm{k}}
\newcommand{\ot}{\leftarrow}
\newcommand{\zebar}{\overline{\epsilon}}
\newcommand{\Hom}{\textup{Hom}}
\newcommand{\add}{\textup{add}}
\newcommand{\rad}{\textup{rad}\,}
\newcommand{\zabar}{\overline{\alpha}}
\newcommand{\zbbar}{\overline{\beta}}
\newcommand{\zgbar}{\overline{\gamma}}
\newcommand{\zdbar}{\overline{\delta}}
\newcommand{\zsbar}{\overline{\sigma}}
\newcommand{\Ext}{\textup{Ext}}
\newcommand{\End}{\textup{End}}
\newcommand{\cmp}{\textup{CMP}}
\newcommand{\scmp}{\underline{\cmp}}
\newcommand{\diag}{\textup{Diag}(\cals)}
\newcommand{\diags}{\textup{Diag}(S)}
\newcommand{\coker}{\textup{coker}}
\newcommand{\im}{\textup{im}}
\newcommand{\wt}{\textup{w}}
\newcommand{\cwt}{\overline{\textup{w}}}
\begin{document}

\title{A geometric model for syzygies over 2-Calabi-Yau tilted algebras II} 
\author{Ralf Schiffler}
\thanks{}
\address{Department of Mathematics, University of Connecticut, Storrs, CT 06269-1009, USA}
\email{schiffler@math.uconn.edu}

\author{Khrystyna Serhiyenko}
\thanks{} 
\address{Department of Mathematics, University of Kentucky, Lexington, KY 40506-0027, USA }
\email{khrystyna.serhiyenko@uky.edu}
\subjclass[2010]{Primary  16G20, 
Secondary 13F60} 

\begin{abstract}
In this article, we continue the study of a certain family of 2-Calabi-Yau tilted algebras, called dimer tree algebras. The terminology comes from the fact that these algebras can also be realized as quotients of dimer algebras on a disc.  They are defined by a quiver with potential whose dual graph is a tree, and they are generally of wild representation type.  Given such an algebra $B$, we construct a polygon $\cals$ with a checkerboard pattern in its interior, that defines a category $\diag$.  The indecomposable objects of $\diag$ are the 2-diagonals in $\cals$, and its morphisms are certain pivoting moves between the 2-diagonals. We prove that the category $\diag$ is equivalent to the stable syzygy category of the algebra $B$.  This result was conjectured by the authors in an earlier paper, where it was proved in the special case where every chordless cycle is of length three.  

As a consequence, we conclude that the number of indecomposable syzygies is finite, and moreover the syzygy category is equivalent to the 2-cluster category of type $\mathbb{A}$.  In addition, we obtain an explicit description of the projective resolutions, which are periodic.  Finally, the number of vertices of the polygon $\cals$ is a derived invariant and a singular invariant for dimer tree algebras, which can be easily computed form the quiver. 
\end{abstract}

\maketitle

\begin{figure}[h]
\begin{center}
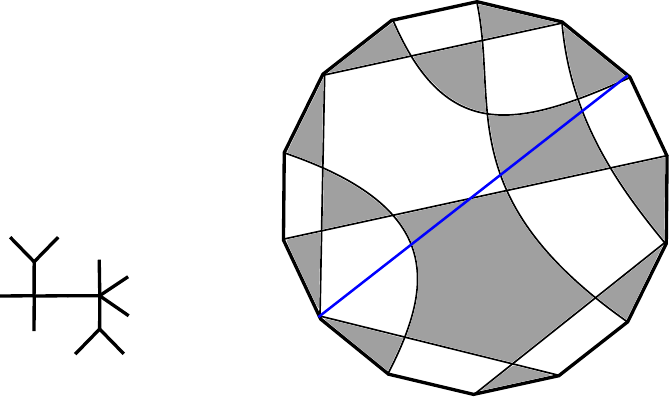
\caption{A quiver $Q$ together with its dual graph on the left, and the corresponding checkerboard polygon on the right. The module $M_{\zg}$ determined by the 2-diagonal $\zg$ is the cokernel of the map $f_{\zg}: P(5)\oplus P(6)\to P(3)\oplus P(4)$, determined by the crossing of $\zg$ with the radical lines 3,4,5, and 6.}
\label{fig:dual_graph}
\end{center}
\end{figure}

\section{Introduction}

In this paper, we continue our study of syzygies over 2-Calabi-Yau tilted algebras initiated in \cite{SS}.  Our main result is the proof of the main conjecture of that article in full generality.

The syzygy modules over a ring or an algebra play a fundamental role in both commutative and non-commutative algebra.  By definition, they are the submodules of projective modules, and hence syzygies are the building blocks of projective (or free) resolutions. In particular, every module can be approximated by its syzygies.  Understanding the category of syzygies provides valuable information about the algebra, but in general this is a difficult problem.  The algebras we consider here are wild in general, and thus there is no hope for understanding their module category entirely.  However, this paper provides a complete description of the syzygy category.

Our algebras have the property that their syzygy category is equivalent to the category of maximal Cohen-Macauley modules and also to the singularity category.   Cohen-Macauley modules and rings are central in commutative algebra, in particular, in the McKay correspondence, matrix factorization, and resolutions of singularities \cite{E, LW, M, PV, Y}.  Buchweitz brought these ideas to the non-commutative setting of Iwanaga-Gorenstein rings, introducing the singularity category in \cite{Bu}, which was later rediscovered and generalized to the graded setting by Orlov \cite{O}.  An important problem is the classification of rings of finite Cohen-Macauley type, which is solved for hypersurface singularities and for normal Cohen-Macauley rings of Krull dimension two, in the commutative case.  For higher dimensions, as well as for non-commutative rings the problem is open. 

In this paper, we are interested in a special class of 2-Calabi-Yau tilted algebras.  The family of 2-Calabi-Yau tilted algebras arises from the categorification of cluster algebras \cite{Amiot, BMRRT} and are a generalization of cluster-tilted algebras and of Jacobian algebras of quivers with potentials.  Keller and Reiten showed that every 2-Calabi-Yau tilted algebra is Iwanaga-Gorenstein of Gorenstein dimension one and that their stable syzygy category is 3-Calabi-Yau \cite{KR}. 

Here we study a special family of 2-Calabi-Yau tilted algebras which are characterized by the condition that the dual graph of their quiver is a tree, see Definition~\ref{def Q}.  The potential is given by the alternating sum of the chordless cycles.  These algebras can be realized as quotients of dimer algebras on a disc which implies that the boundary arrows in our quiver also induce relations on the algebra. These are zero relations and guarantee that the algebra is finite-dimensional and schurian.  Because of this similarity we call our algebras \emph{dimer tree algebras}.  
For example, algebras arising from the coordinate rings of the Grassmannians $\textup{Gr}(3,n)$ are dimer tree algebras. Dimer algebras have been studied extensively, see \cite{HK, Po,  JKS, BKM, Pr} and the references therein; for their connection to homological mirror symmetry, see \cite{Bocklandt}. 

\subsubsection*{Main result} 
Let $B$ be a dimer tree algebra with quiver $Q$. Let $\text{mod}\,B$ denote the category of finitely generated right $B$-modules and $\scmp\,B$ the category of non-projective syzygies.  Also, let $\Omega$ denote the syzygy functor.  To every boundary arrow $\alpha$ of $Q$, we associate a weight $\wt(\alpha)$ which equals either 1 or 2, depending on the parity of the length of its cycle path (or zigzag path), see Definition~\ref{def cycle path 2}.  The \emph{total weight} of $B$, which is the sum of weights of all boundary arrows will be denoted by $2N$.  We use a polygon $\mathcal{S}$ with $2N$ vertices to provide a geometric model for the syzygy category $\scmp\,B$ of $B$.  

Let $\diags$ be the category of 2-diagonals in $\mathcal{S}$, whose morphisms are given by pivoting moves between the 2-diagonals.  This is a triangulated category whose shift functor is given by the clockwise rotation $R$ by $\pi/N$.   Our polygon $\mathcal{S}$ is also quipped with a checkerboard pattern that is defined by a set of radical lines $\rho(i)$ associated to the vertices $i$ of $Q$. Then each 2-diagonal $\zg$ in $\mathcal{S}$ corresponds to an indecomposable syzygy $M_{\zg}$ and its intersections with the checkerboard pattern determines the projective presentation of $M_{\zg}$.   Moreover, the 2-diagonal $\zg$ will be oriented and hence its crossings with the lines $\rho(i)$ of the checkerboard pattern come with an orientation as well.  We define projective modules $P_0(\zg)=\oplus_i P(i)$ and $P_1(\zg)=\oplus_j P(j)$, where the first sum is over all $i$ such that $\rho(i)$ crosses $\zg$ from right to left and the second sum is over all $j$ such that $\rho(j)$ crosses $\zg$ from left to right. 

We are now ready to state our main result. 

\begin{thm}\label{thm main}
Let $B$ be a dimer tree algebra of total weight $2N$ and $\mathcal{S}$ the associated checkerboard polygon. 
For each 2-diagonal $\zg$ in $\cals$ there exists a morphism $f_\zg\colon P_1(\zg)\to P_0(\zg)$ such that the mapping $\zg\mapsto \coker f_\zg$ induces an equivalence of categories 
\[F\colon\diag \to \scmp \, B.\]
 Under this equivalence, the radical line $\rho(i)$ corresponds to the radical of the indecomposable projective $P(i)$ for all $i\in Q_0$. The clockwise rotation $R$ of $\cals$ corresponds to the shift $\zO$ in $\scmp\,B$ and $R^2$ corresponds to the  inverse Auslander-Reiten translation $\tau^{-1}=\zO^2$. 
Thus
\[ F(\rho(i))=\rad P(i)\]
\[ F(R(\zg))= \zO\,F(\zg)\]
\[ F(R^2(\zg))= \tau^{-1}\,F(\zg)\]
Furthermore, $F$ maps the 2-pivots in $\diag$ to the irreducible morphisms in $\scmp\, B$, and the  meshes in $\diag$ to the Auslander-Reiten triangles in $\scmp\,B$.
\end{thm}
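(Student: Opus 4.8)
We sketch the strategy. The plan is to build the functor $F$ in stages: first on objects, then on the $2$-pivots, and finally to check that the resulting functor is a triangle equivalence with the stated compatibilities.

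\textbf{The object map.} First I would make $f_\zg\colon P_1(\zg)\to P_0(\zg)$ explicit. Following the oriented $2$-diagonal $\zg$ and recording the ordered sequence of its crossings with the radical lines $\rho(i)$ produces a walk; the right-to-left crossings index the indecomposable summands of $P_0(\zg)$, the left-to-right crossings index those of $P_1(\zg)$, and each entry of $f_\zg$ is the residue in $B$ of the path read off by following $\zg$ through the checkerboard cell between two consecutive radical lines. Using that $B$ is schurian and, by \cite{KR}, Iwanaga--Gorenstein of Gorenstein dimension one, I would show that $M_\zg=\coker f_\zg$ is indecomposable, that the presentation $P_1(\zg)\xrightarrow{f_\zg}P_0(\zg)\to M_\zg\to 0$ is minimal, and hence that $M_\zg$ is a non-projective syzygy with $\zO M_\zg=\im f_\zg$. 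The support and dimension vector of $M_\zg$ can be read directly from the cells that $\zg$ traverses, and distinct $2$-diagonals give distinct data, so $\zg\mapsto M_\zg$ is injective on isomorphism classes. A direct computation then gives $F(\rho(i))=\rad P(i)$: viewed as a degenerate $2$-diagonal, $\rho(i)$ meets only the cells adjacent to it, and the corresponding cokernel is $\rad P(i)=\zO S(i)$.

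\textbf{Compatibility with the shift.} The technical core, and the step I expect to be the \emph{main obstacle}, is the isomorphism $\zO\,M_\zg\cong M_{R\zg}$. Here one must compute a minimal projective presentation of $\zO M_\zg=\im f_\zg$ combinatorially: the crossing sequence of $R\zg$ is that of $\zg$ after one clockwise rotation by $\pi/N$, which on the checkerboard slides each crossing to the next radical line, and one has to verify that the presentation matrix obtained is exactly $f_{R\zg}$. This requires a careful, cell-by-cell local analysis of how the potential relations and the boundary zero relations of a dimer tree algebra behave under such a rotation, and it is here that the tree condition (Definition~\ref{def Q}) and the weights $\wt(\za)$ (Definition~\ref{def cycle path 2}) are used; this is also the part that genuinely goes beyond the length-three case of \cite{SS}. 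Granting $R\leftrightarrow\zO$, the relation $R^2\leftrightarrow\tau^{-1}$ follows formally once one knows $\tau^{-1}\cong\zO^2$ on $\scmp\,B$, which holds because this category is $3$-Calabi--Yau \cite{KR}.

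\textbf{Equivalence and the triangulated structure.} Injectivity on objects reduces density of $F$ to matching cardinalities: the number of $2$-diagonals in $\cals$ equals the number of indecomposable non-projective syzygies of $B$. I would prove this by a counting argument, or structurally by showing that every indecomposable syzygy is obtained from the modules $\rad P(i)=F(\rho(i))$ by iterating $\zO$ and forming the extensions coming from meshes, matched against the corresponding statement for $2$-diagonals and $2$-pivots in $\diag$. For the functor itself I would send each $2$-pivot to a generator of the space of irreducible morphisms between the corresponding indecomposable syzygies (one-dimensional since $B$ is schurian) and check that the mesh relations defining $\diag$ are sent to the Auslander--Reiten relations of $\scmp\,B$, so that $F$ is well defined. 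Full faithfulness can then be verified either by comparing $\dim_\kb\Hom_{\diag}(\zg,\zg')$, computed from reduced pivoting paths modulo meshes, with $\dim_\kb\underline{\Hom}_B(M_\zg,M_{\zg'})$, computed via Serre duality in the $3$-Calabi--Yau category $\scmp\,B$ together with the presentations above, and noting that $F$ is nonzero on each hom-space; or, using that $\diag$ is the $2$-cluster category of type $\mathbb A$ and hence the mesh category of its Auslander--Reiten quiver, by showing that $\scmp\,B$ is also representation finite and standard. Finally, identifying the standard triangles on the two sides (syzygy triangles versus mesh triangles) shows $F$ is exact; an exact equivalence of Krull--Schmidt categories automatically carries irreducible morphisms to irreducible morphisms and almost split triangles to almost split triangles, which together with the description of $\diag$ gives the last two assertions.
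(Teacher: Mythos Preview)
Your strategy is a direct attack: construct $f_\zg$ explicitly from the checkerboard, verify $\zO M_\zg\cong M_{R\zg}$ by a local cell-by-cell analysis, and then establish the equivalence by counting and matching mesh relations. This is essentially the method of \cite{SS}, which succeeded only when every chordless cycle has length three. The paper takes a completely different, indirect route. It first proves abstractly (Theorem~\ref{thm algo}) that $\scmp\,B\cong\diags$ for a polygon $S$ of the correct size, by an induction on the number of chordless cycles: a sequence of explicit mutations (Lemma~\ref{lem der}) and one-point (co)extensions (Corollary~\ref{cor Chen 1}, Lemma~\ref{lem sing}, Proposition~\ref{prop 3cycle}) gives derived and singular equivalences that preserve the total weight and reduce $Q$ to a single cycle. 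Only \emph{after} the equivalence is in hand does the paper identify the radical lines (Theorem~\ref{thm poly}) and then deduce the projective presentation of each $M_\zg$ from the characterization $\Ext^1_{\scmp\,B}(M,\rad P(x))\ne 0\Leftrightarrow P(x)\in\add P_0$ (Proposition~\ref{prop:ext}) together with Lemma~\ref{lem left to right}. In particular, $f_\zg$ is not constructed at all; only its domain and codomain are pinned down, and the paper explicitly notes that an explicit description ``would be quite involved.''

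The step you flag as the main obstacle---proving $\zO M_\zg\cong M_{R\zg}$ by direct computation for cycles of arbitrary length---is precisely the obstruction that confined \cite{SS} to triangles, and your sketch gives no new mechanism for overcoming it; ``careful, cell-by-cell local analysis'' is a restatement of the problem, not a solution. Your density argument is also circular as stated: matching cardinalities requires knowing in advance that $B$ has exactly $N(N-2)$ indecomposable syzygies, but in the paper this is a \emph{consequence} of the theorem, not an input. The payoff of the paper's approach is that both of these difficulties disappear: the equivalence, the shift compatibility, and the finiteness all come for free from the reduction to a single cycle, and the only work on the $B$ side is the extension computation of Proposition~\ref{prop:ext}. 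What your approach would buy, if it could be made to work, is an explicit formula for $f_\zg$---something the authors say they would like but do not have.
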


An example is given in Figure~\ref{fig:dual_graph}.

Theorem~\ref{thm main} was conjectured in \cite{SS}, where it was proved in the special case where every chordless cycle in $Q$ has length 3.  Now that this conjecture is proved, we also have Corollaries 1.3-1.8 of \cite{SS} in full generality, some of which we recall now.   To begin with, the category $\scmp\,B$ is equivalent to the 2-cluster category of type $\mathbb{A}_{N-2}$ and the number of indecomposable syzygies is $N(N-2)$.  In particular, dimer tree algebras have finite Cohen-Macauley type. Furthermore, the projective resolutions of syzygies are completely determined by the checkerboard polygon $\mathcal{S}$ and are periodic of period $N$ or $2N$.  Additionally, the indecomposable syzygies are rigid, meaning they have no self-extensions in $\text{mod}\,B$.  We conjecture that they are also $\tau$-rigid.  Moreover, the total weight of $B$ is a derived invariant, even a singular invariant, for the dimer tree algebra $B$, and it can be easily read off the quiver.  Finally, the same checkerboard polygon also provides a geometric model for the stable cosyzygy category $\underline{\text{CMI}}\,B$, and we have a commutative diagram of equivalences
\[\xymatrix{
\scmp\,B \ar@<.5ex>[rr]^{\tau} && \underline{\text{CMI}}\,B \ar@<.5ex>[ll]^{\tau^{-1}}\\
&\diags\ar[ur]_{\text{ker}\,\nu f_{\zg}}\ar[ul]^{\text{coker}\,f_{\zg}}
}
\]
where $\tau, \tau^{-1}$ are the Auslander Reiten translations and $\nu$ is the Nakayama functor in $\text{mod}\,B$.

\subsubsection*{A few words about the proof}
In the proof, we establish certain derived equivalences and singular equivalences and use them to reduce the problem to the case when $B$ has only one chordless cycle, see Section~\ref{sect 1}. Some of these equivalences are given by mutation while others are given by removal or addition of vertices to the quiver. This process builds on earlier results by Bastian-Holm-Ladkani \cite{BHL}, Lu \cite{Lu}, and Chen \cite{Chen}.  In Section~\ref{sect 2}, we then give a new construction of the checkerboard pattern and show that it is equivalent to the one in \cite{SS}. The main result is proved in Section~\ref{sect 3}. 

One difference from the approach in \cite{SS} is that we show the existence of the morphism $f_{\zg}$ in the main theorem, but we do not have an explicit construction.  It would be useful to have such a description, but, judging from \cite{SS}, such a description would be quite involved.

\subsubsection*{Related work}
For the very special class of cluster-tilted algebras of finite representation type, the syzygy categories were studied before by  Chen, Geng and Lu in \cite{CGL}, where they gave a classification of the syzygy categories of these algebras. In particular, they show that the components of $\scmp \,B$ are equivalent to the stable categories of certain self-injective algebras. Their procedure involves a case by case analysis that uses a  classification of the derived  equivalence classes  of cluster-tilted algebras of Dynkin type in \cite{BHL,BHL2}. 
 Later Lu extended these results to simple polygon-tree algebras \cite{L}. One of the ingredients of the proof is successive mutation at vertices of the exterior cycles and reduction to a cluster-tilted algebra of Dynkin type $\mathbb{D}$, and we generalize this method in the first step of our proof in section~\ref{sect 1}.
 These algebras are special cases of dimer tree algebras. 
 The above results determine only the type of the syzygy category but do not describe the objects or the morphisms.

Garcia-Elsener and the first author have described the syzygy category of cluster-tilted algebras of type $\mathbb{D}$ in terms of arcs in a once-punctured polygon in \cite{GES}. 

For gentle algebras, the singularity categories have been described by Kalck in \cite{K} using $m$-cluster categories of type $\mathbb{A}_1$.  In our setting the algebra is gentle if and only if the quiver has a unique chordless cycle. This has been extended to skew-gentle algebras by Chen and Lu in \cite{CL}.
For further results on singularity categories of finite dimensional algebras see \cite{C,C2, CDZ, LZ, Sh}.

\subsubsection*{Future directions} In a work in progress, we describe the connection to dimer algebras and show how to embed our checkerboard polygon
in an alternating strand diagram of the dimer model. For an illustration, we show in Figure~\ref{fig:dual_graph2}  an alternating strand diagram that contains the checkerboard polygon of Figure~\ref{fig:dual_graph}. The orientation of the strands is such that the shaded regions are oriented, while the white regions are alternating. The corresponding dimer algebra on the disc is given by the quiver on the right in the same figure.
  Each vertex represents a white region in the alternating strand diagram and two regions are connected by an arrow if they share a crossing point.
 The full subquiver on the vertices 1, 2, \dots, 9 is equal to the twisted quiver $\widetilde Q$ of the quiver $Q$ from  Figure~\ref{fig:dual_graph} in the sense of Bocklandt \cite{Bocklandt}. The vertices 10, 11, \dots 23 are frozen vertices.  Note that both the checkerboard polygon and the alternating strand diagram have 14 boundary vertices.

 In another direction, it will be interesting to see if we can relax the conditions on the quiver such as allowing the dual graph to be disconnected or to contain cycles.   Other future projects include the behavior of the checkerboard polygon under mutations, a description of the syzygies in terms of their composition factors, and the question of $\tau$-rigidity of the indecomposable syzygies.

\subsection*{Acknowledgements:}  We thank  Eleonore Faber,  Alastair King and Matthew Pressland for interesting discussions.

The first author was supported by the NSF grants  DMS-1800860,  DMS-2054561, and by the University of Connecticut. The second author was supported by the NSF grant DMS-2054255. This work was partially
supported by a grant from the Simons Foundation.  The authors would like to thank the Isaac Newton Institute for Mathematical Sciences for support and hospitality during the programme Cluster Algebras and Representation Theory when work on this paper was undertaken. This work was supported by: EPSRC Grant Number EP/R014604/1.

\begin{figure}
\begin{minipage}{.5\textwidth}
\begin{center}
\scalebox{.7}{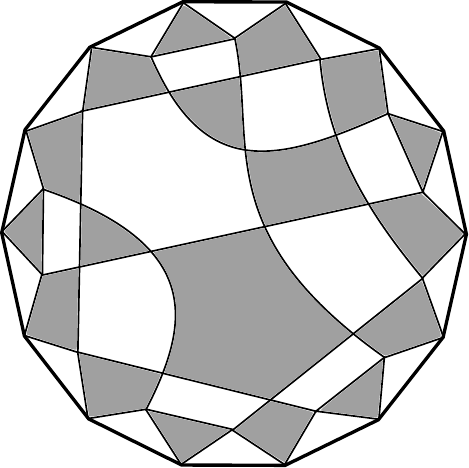}
\end{center}
\end{minipage}%
\begin{minipage}{.5\textwidth}
\[\xymatrix@R=6pt@C=6pt{
&&{\color{blue} 23}\ar[dr] && {\color{blue} 10}\ar[ll]\ar[dr]\\
&{\color{blue} 22}\ar[dl]\ar[ur]&&1\ar[dd]\ar[ur]&&{\color{blue} 11} \ar[dl]\\
{\color{blue} 21}\ar[dr]&&&&3\ar[dr]\ar[ul] && {\color{blue} 12} \ar[ul]\ar[dd]\\
&9\ar[rr]\ar[dl]&&2\ar[dl]\ar[ur] \ar[uull]&& 5\ar[dl]\ar[ur]\\
{\color{blue} 20}\ar[d]\ar[uu]&&8\ar[d]\ar[ul]&&4\ar[ul] \ar[dr]&& {\color{blue} 13} \ar[ul]\\
{\color{blue} 19}\ar[urr]&&7\ar[r] \ar[dl]&6\ar[ur] \ar[dd]&& {\color{blue} 14} \ar[ur] \ar[dl]\\
&{\color{blue} 18}\ar[dr]\ar[ul]&&&{\color{blue} 15} \ar[ul]\\
&&{\color{blue} 17}\ar[uu]&{\color{blue} 16} \ar[ur]\ar[l]
}
\]
\end{minipage}
\caption{An alternating strand diagram  that contains the checkerboard polygon of Figure~\ref{fig:dual_graph}. The corresponding quiver is shown on the right. Its mutable part is the twist of the quiver of the checkerboard polygon.}
\label{fig:dual_graph2}
\end{figure}

\section{Preliminaries} 
Let $\kb$ be an algebraically closed field. If $A$ is a finite-dimensional $\kb$-algebra, we denote by $\textup{mod}\,A$ the category of finitely generated right $A$-modules. Let $D$ denote the standard duality $D=\Hom(-,\kb)$. If $Q_A$ is the ordinary quiver of the algebra $A$, and $i$ is a vertex of $Q_A$,  we denote by $P(i),I(i),S(i)$ the corresponding indecomposable projective, injective, simple $A$-module, respectively. 

Let $\rad\, A$ denote the Jacobson radical of $A$. If $M\in \textup{mod}\,A$ its radical is defined as $\rad \,M= M (\rad\,A)$ and its top
as $\textup{top}\,M= M/\rad\, M$. Thus in particular $\textup{top}\,P(i) =S(i)$.
Given a module $M$, we denote by $\add \,M$ 
the full subcategory of $\textup{mod}\,A$ whose objects are direct sums of summands of $M$.

For further information about representation theory and quivers we refer to \cite{ASS,S2}.

\subsection{Cohen-Macauley modules over 2-Calabi-Yau tilted algebras}\label{sect CM}
From now on, let $B$ be a 2-Calabi-Yau tilted algebra. Thus $B$ is the endomorphism algebra of a cluster-tilting object in a 2-Calabi-Yau category. 
A $B$-module $M$ is said to be projectively Cohen-Macauley if $\Ext^i_B(M,B)=0$ for all $i>0$. In other words, $M$ has no extensions with projective modules.

We denote by $\cmp\,B$ the full subcategory of $\textup{mod}\,B$ whose objects are the projectively Cohen-Macauley modules. This  is a Frobenius category. The projective-injective objects in $\cmp\,B$ are are precisely the projective $B$-modules. The corresponding stable category $\scmp\,B$ is triangulated, and its inverse shift is given  by the syzygy operator $\zO$ in $\textup{mod}\,B$.

Moreover, by Buchweitz's theorem \cite[Theorem 4.4.1]{Bu}, there exists a triangle equivalence between $\scmp\,B$ and the singularity category $\cald^b(B)/\cald^b_{perf} (B)$ of $B$.
Keller and Reiten showed in \cite{KR} that the category $\scmp \,B$ is 3-Calabi-Yau.

It was shown in  \cite{GES} that if $M\in \text{mod}\,B$ is indecomposable then the following are equivalent. 
 \begin{itemize}
\item [(a)] $M$ is a non-projective syzygy;
\item [(b)] $M \in \textup{ind}\,\scmp\,B$; 
\item [(c)] $\zO^2_B \tau_B M \cong M$.
\end{itemize}

We may therefore use the terminology ``syzygy'' and ``Cohen-Macauley module'' interchangeably.

Two algebras are said to be \emph{derived equivalent} if there exists a triangle equivalence between their bounded derived categories.
Two algebras are said to be \emph{singular equivalent} if there exists a triangle equivalence between their singularity categories.

\subsection{Quivers with potentials}\label{sect QP}
A quiver $Q=(Q_0,Q_1,s,t)$ consists of a finite set of vertices $Q$, a finite set of arrows $Q_1$ and two maps $s,t\colon Q_1\to Q_0$, where $s$ is the source and $t$ is the target of the arrow. Thus if $\za\in Q_1$ then $\za\colon s(\za)\to t(\za)$. 

 A \emph{potential} $W$ on a quiver $Q$ is a  linear combination of non-constant  cyclic paths. For every arrow $\za\in Q_1$, the cyclic derivative $\partial _\za$ is defined on a cyclic path $\za_1\za_2\dots\za_t$ as 
\[ \partial_\za(\za_1\za_2\dots\za_t)=\sum_{p\colon \za_p=\za} \za_{p+1}\dots\za_t\za_1\dots\za_{p-1}\]
and extended linearly to the potential $W$.

The \emph{Jacobian algebra} $\textup{Jac}(Q,W)$ of the quiver with potential is the quotient of the (completed) path algebra $\kb Q$ by  (the closure of) the 2-sided ideal generated by all partial derivatives $\partial_\za W$, with $\za\in Q_1$. 
Two parallel paths in the quiver are called \emph{equivalent} if they are equal in $\textup{Jac}(Q,W)$.

If $Q$ has no oriented 2-cycles then $\textup{Jac}(Q,W)$ is 2-Calabi-Yau tilted by \cite{Amiot}.

Let $(Q,W)$ be a quiver with potential and $k$ a vertex of $Q$ that does not belong to an oriented 2-cycle. Using a cyclic shift if necessary, we may assume without loss of generality that none of the cyclic paths in $W$ starts at $k$. 
The mutation of $(Q,W)$ at $k$ is the quiver with potential $\mu_k(Q,W)=(Q',W')$, where

(i) $Q'$ is the quiver obtained from $Q$ by the following local transformations
\begin{enumerate}
\item[-] For every path of length two $\xymatrix@C15pt{i\ar[r]^\za&k\ar[r]^\zb&j} $,  introduce a new arrow  $\xymatrix{i\ar[r]^{[\za\zb]} &j}$. 
\item[-] Replace every arrow $\za\colon i\to k$ ending at $k$ by its opposite $\zabar\colon i\ot k$
and every arrow $\zb\colon k\to j$ starting at $k$ by its opposite  $\zbbar\colon k\ot j.$
\end{enumerate}

(ii) $W'=[W]+\zD_k$, where 
\[\zD_k=\sum_{\za,\zb\in Q_1\colon t(\za)=s(\zb)=k} [\za\zb]\,\zbbar\,\zabar\]
and 
$[W]$ is obtained from $W$ by substituting $[\za\zb]$ for each factor $\za\zb$ with $t(\za)=s(\zb)=k$.

A potential that does not contain any 2-cycles is called \emph{reduced}. It may happen that the mutation of a reduced potential is no longer reduced. However, often it is possible to replace $W'$ by an equivalent potential that is reduced. In that case, we also may remove all 2-cycles from the quiver, and then we obtain the quiver $Q'=\mu_k Q$ given by ordinary quiver mutation at $k$.

\subsection{Translation quivers and mesh categories} We review here the notions of translation quiver and mesh category from \cite{Ri, H}. These notions are often used in order to define a category from combinatorial data. Examples of such constructions are the combinatorial constructions of cluster categories of finite type in \cite{BM, BM2, CCS, S}. 

A {\em  translation
 quiver} $(\zG,\tau)$ is a quiver $\zG=(\zG_0,\zG_1)$ without loops
 together with an injective map $\tau\colon \zG_0'\to\zG_0$  (the {\em translation}) from a subset $\zG_0'$ of $\zG_0$ to $\zG_0$ such that, for all vertices $x\in\zG_0'$, $y\in \zG_0$, 
the number of arrows from $y \to x$ is equal to the number of arrows
 from $\tau x\to y$. 
Given a  translation quiver $(\zG,\tau)$, a \emph{polarization of} $\zG$ is
 an injective map $\sigma:\zG_1'\to\zG_1$, where $\zG_1'$ is the set of all arrows $\za\colon y\to x$ 
 with $x \in \zG_0'$, such that 
$\sigma(\za)\colon \tau x\to y$  for every arrow $\za\colon y\to x\in \zG_1$.
From now on we assume that $\zG$ has no multiple arrows. In that case, there is a unique polarization of $\zG$.

The {\em path category } of a translation quiver $(\zG,\tau)$ is the category whose  objects are
the vertices $\zG_0$ of $\zG$, and, given $x,y\in\zG_0$, the $\kb$-vector space of
morphisms from $x$ to $y$ is given by the $\kb$-vector space with basis
the set of all paths from $x $ to $y$. The composition of morphisms is
induced from the usual composition of  paths.
The {\em mesh ideal} in the path category of $\zG$ is the ideal
generated by the {\em mesh relations}
\begin{equation}\nonumber
m_x =\sum_{\za:y\to x} \sigma(\za) \za 
\end{equation}  
for all $x \in \zG_0'$.

The {\em  mesh category } of the translation quiver $(\zG,\tau)$ is the
quotient of its path 
category by the mesh ideal.

\subsection{The category of 2-diagonals of a polygon} \label{sect 23} In this subsection, we recall a geometric model for the 2-cluster categories of type $\mathbb{A}$ obtained by Baur and Marsh in \cite{BM}.

 Let $S$ be a regular polygon with  an even number of vertices, say $2N$. Let $R$ be the automorphism of $S$  given by a clockwise rotation about $180/N $ degrees. Thus $R^{2N}$ is the identity. 

Following \cite{BM}, we define the category $\textup{Diag}(S)$ of 2-diagonals of $S$ as follows. 
The indecomposable objects of $\textup{Diag}(S)$ are the 2-diagonals in $S$. Recall that a 2-diagonal is a diagonal of $S$ connecting two vertices such that the two polygons obtained by cutting $S$ along the diagonal both have an even number of vertices and at least 4. In particular, boundary segments are not 2-diagonals.

The irreducible morphisms of $S$ are given by 2-pivots. We recall the definition below. An illustration is given in Figure \ref{fig:S} on the left. 
\begin{definition}\label{def 2pivot}
 Let $\zg$ be a 2-diagonal in the polygon $S$ and denote its endpoints by $a$ and $x$. Denote by $b$ the clockwise neighbor of $a$, and by $c$ the clockwise neighbor of $b$ on the the boundary of $S$. At the other end, denote by $y$ the clockwise neighbor of $x$, and by $z$ the clockwise neighbor of $y$ on the boundary of $S$. 
 
Unless $a$ and $z$ are neighbors on the boundary, the 2-diagonal $\zg'$  connecting $a$ and $z$ is called the {\em 2-pivot of $\zg$ fixing the endpoint $a$}.

Unless $c$ and $x$ are neighbors on the boundary, the 2-diagonal $\zg''$  connecting $c$ and $x$  is called the {\em 2-pivot of $\zg$ fixing the endpoint $x$}.
\end{definition}
\begin{figure}
\begin{center}
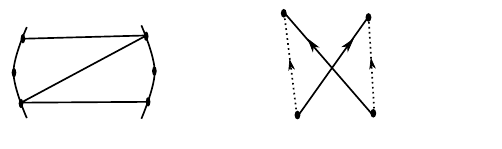
\caption{ The arc $\zg$ and its 2-pivots $\zg', \zg''$ are shown on the left, and the crossing between $\zg$ and $\zg'$ on the right gives rise to an extension in $\text{Ext}^1(M_{\zg}, M_{\zg'})$.}
\label{fig:S}
\end{center}
\end{figure}

Let $\zG$ be the quiver whose vertices are the 2-diagonals in $S$, and there is an arrow from the 2-diagonal $\zg$ to the 2-diagonal $\zg'$ precisely if $\zg'$ is obtained from $\zg$ by a 2-pivot. Then the  pair  $(\zG, R^{-2})$ is a translation quiver.

\begin{definition}\cite{BM}
 The category $\textup{Diag}(S)$    of 2-diagonals in the  polygon $S$ is the mesh category of the translation quiver $(\zG,R^{-2})$.
\end{definition}

Next, we recall that the category of diagonals is equivalent to the 2-cluster category.  
Let  $H$ be the path algebra of a Dynkin quiver of type $\mathbb{A}_r$. Let $\calc^2$ denote the 2-cluster category of type $\mathbb{A}_{r}$. This category is defined as the orbit category of the bounded derived category $\cald^b(\textup{mod}\,H)$ by the functor $\tau_{\cald}^{-1}[2]$. 
Here $\tau_{\cald}$ is the Auslander-Reiten translation and $[2]=[1]\circ [1]$ is the second power of the shift functor in the derived category.  Thus
\begin{equation*}
\calc^2=\cald^b(\textup{mod}\,H)/\tau_{\cald}^{-1}[2].\end{equation*}
This category was introduced in \cite{K,T}, and was studied in \cite{BRT,IY, Tor}.

\begin{thm}\label{thm BM}
 \cite{BM} Let $S$ be a polygon with $2N$ vertices. Then the category $\textup{Diag}(S)$ is equivalent to the 2-cluster category of type $\mathbb{A}_{N-2}$.
\end{thm}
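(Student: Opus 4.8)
The plan is to realise both categories as the mesh category of one and the same translation quiver. Since $\textup{Diag}(S)$ is by construction the mesh category of $(\zG,R^{-2})$, it suffices to prove: (a) $(\zG,R^{-2})$ is isomorphic, as a translation quiver, to the Auslander--Reiten quiver of the $2$-cluster category $\calc^2:=\calc^2(\mathbb{A}_{N-2})$; and (b) $\calc^2$ is \emph{standard}, i.e.\ equivalent to the mesh category of its own AR quiver. Granting (a) and (b), $\textup{Diag}(S)$ is the mesh category of $(\zG,R^{-2})$, hence of the AR quiver of $\calc^2$, hence equivalent to $\calc^2$; the identification in (a) sends $R^{-2}$ to $\tau$, and a check of where a fixed object goes sends $R$ to the inverse suspension, so the equivalence is triangulated.

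\emph{Step (a).} The AR quiver of $\cald^b(\textup{mod}\,\kb\mathbb{A}_{N-2})$ is $\mathbb{Z}\mathbb{A}_{N-2}$ with translation $\tau_\cald$, and there the suspension acts as the pure translation $[2]=\tau_\cald^{-(N-1)}$ (the exponent being the Coxeter number of $\mathbb{A}_{N-2}$); hence $F:=\tau_\cald^{-1}[2]$ acts as $\tau_\cald^{-N}$, and by Keller's theorem the orbit category $\calc^2=\cald^b(\textup{mod}\,\kb\mathbb{A}_{N-2})/F$ is triangulated with AR quiver the cylinder $\mathbb{Z}\mathbb{A}_{N-2}/\langle\tau_\cald^{-N}\rangle$, a finite stable translation quiver with $N(N-2)$ vertices. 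On the polygon side, each $2$-diagonal of $S$ joins an even vertex to an odd vertex; writing these as $2a$ and $2b+1$ with $a,b\in\mathbb{Z}/N$ and putting $w=(b-a\bmod N)\in\{1,\dots,N-2\}$, the assignment $\zg\mapsto(a,w)$ is a bijection from $2$-diagonals to $\mathbb{Z}/N\times\{1,\dots,N-2\}$. I would then check that a $2$-pivot sends $(a,w)$ to $(a,w+1)$ or to $(a+1,w-1)$, failing to exist exactly when the target would leave the strip $1\le w\le N-2$, so that the two $2$-pivots out of $\zg$ are precisely the (at most two) arrows out of the corresponding vertex of the cylinder; and that $R^{-2}$ sends $(a,w)$ to $(a-1,w)$, i.e.\ acts as $\tau$. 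This identifies $(\zG,R^{-2})$ with $\mathbb{Z}\mathbb{A}_{N-2}/\langle\tau_\cald^{-N}\rangle$; as the meshes on either side are determined purely by arrows and translation, the vertex bijection is automatically an isomorphism of translation quivers.

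\emph{Step (b).} By Happel's theorem, $\cald^b(\textup{mod}\,\kb\mathbb{A}_{N-2})$ is standard, being the mesh category of $\mathbb{Z}\mathbb{A}_{N-2}$. To descend standardness to the orbit category, I would use covering theory (Riedtmann; Bongartz--Gabriel): since $N\ge3$, the group $\langle\tau_\cald^{-N}\rangle$ acts freely and \emph{admissibly} on $\mathbb{Z}\mathbb{A}_{N-2}$ — any two vertices of one orbit lie in columns at distance $\ge N$, so never in a common mesh — and hence the projection $\mathbb{Z}\mathbb{A}_{N-2}\to\mathbb{Z}\mathbb{A}_{N-2}/\langle\tau_\cald^{-N}\rangle$ is a covering of translation quivers compatible with the mesh relations. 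Therefore the mesh category of the quotient is the $\langle F\rangle$-orbit category of the mesh category of $\mathbb{Z}\mathbb{A}_{N-2}$, which is $\calc^2$ itself. Combined with (a), this gives $\textup{Diag}(S)\simeq\calc^2(\mathbb{A}_{N-2})$, the assertion of \cite{BM}.

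The step I expect to be the main obstacle is the descent of standardness in (b): making precise, via the covering functor, that passage to the orbit neither destroys nor creates mesh relations, and in particular dealing by hand with the narrowest cases $N=3,4$, where $\mathbb{Z}\mathbb{A}_{N-2}$ has only one or two rows. A secondary, purely combinatorial difficulty in (a) is the orientation bookkeeping needed to line up "clockwise" on $S$ with the direction of $\tau_\cald$, so that $R^{-2}$ — and not $R^{2}$ — comes out as the translation. As an alternative to the $\mathbb{Z}\mathbb{A}_{N-2}$--picture in (a), one can instead match $2$-diagonals directly with the standard parametrisation of $\textup{ind}\,\calc^2(\mathbb{A}_{N-2})$ by the shifted interval modules in a fundamental domain of $F$, and transport the triangulated structure along that bijection.
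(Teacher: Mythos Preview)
The paper does not give a proof of this statement at all: Theorem~\ref{thm BM} is quoted verbatim from \cite{BM} and used as a black box. There is therefore no ``paper's own proof'' against which to compare your proposal.

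That said, your outline is essentially the strategy of the original Baur--Marsh paper. A few remarks on the details. Your computation in Step~(a) that $[2]=\tau_\cald^{-(N-1)}$ on $\mathbb{Z}\mathbb{A}_{N-2}$, whence $F=\tau_\cald^{-N}$, is correct, as is the count $N(N-2)$ of $2$-diagonals and the parametrisation by $(a,w)\in\mathbb{Z}/N\times\{1,\dots,N-2\}$. The bookkeeping you flag for the $2$-pivots and for the direction of $R^{\pm2}$ versus $\tau^{\pm1}$ is genuinely the only delicate point here, and it is purely a matter of fixing consistent labelling conventions; once done, the translation-quiver isomorphism is immediate.

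Your Step~(b) is where one must be a little careful. The cleanest way to see that $\calc^2$ is standard is not to push Happel's standardness of $\cald^b$ through the quotient by covering theory, but rather to argue directly: $\calc^2(\mathbb{A}_{N-2})$ is a Hom-finite, Krull--Schmidt, $\kb$-linear category with finitely many indecomposables and no loops or $2$-cycles in its AR quiver, and one checks (using the explicit fundamental domain in $\mathbb{Z}\mathbb{A}_{N-2}$) that $\Hom$-spaces are at most one-dimensional. Under these hypotheses the additive category is automatically the mesh category of its AR quiver. Your covering-theory route also works, but the admissibility verification you worry about is exactly the content of this Hom-dimension check, so you do not really save work. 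The boundary cases $N=3,4$ present no special difficulty: for $N=3$ one has $\mathbb{A}_1$ and $\calc^2$ has three objects and no nontrivial morphisms; for $N=4$ the quiver $\mathbb{Z}\mathbb{A}_2/\tau^{-4}$ is a single $8$-cycle and the mesh relations are vacuous on the Hom side.
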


Under this equivalence each 2-diagonal $\zg$ of $S$ corresponds to an indecomposable object $M_{\zg}$ in $\calc^2$. Moreover, there exists a nontrivial extension between two indecomposable objects in $\calc^2$ if and only if
the corresponding 2-diagonals cross. 

Next, we introduce orientations on the diagonals to determine the direction of the extension between the two objects in $\calc^2$ whose corresponding 2-diagonals cross.  Label the boundary vertices of $S$ from 1 to $2N$ in clockwise order.  Then the two endpoints of any 2-diagonal have different parity, and we orient a 2-diagonal from its odd labeled endpoint to its even labeled endpoint.    Given 2-diagonals $\zg, \zg'$ that cross, we say that {\it $\zg'$ crosses $\zg$ from right to left} (respectively left to right) in the situations shown in Figure~\ref{fig crossdirection}. Using this notation we obtain the following result.  
\begin{figure}
\begin{center}
\scalebox{0.7}{\Large
\begingroup%
  \makeatletter%
  \providecommand\color[2][]{%
    \errmessage{(Inkscape) Color is used for the text in Inkscape, but the package 'color.sty' is not loaded}%
    \renewcommand\color[2][]{}%
  }%
  \providecommand\transparent[1]{%
    \errmessage{(Inkscape) Transparency is used (non-zero) for the text in Inkscape, but the package 'transparent.sty' is not loaded}%
    \renewcommand\transparent[1]{}%
  }%
  \providecommand\rotatebox[2]{#2}%
  \newcommand*\fsize{\dimexpr\f@size pt\relax}%
  \newcommand*\lineheight[1]{\fontsize{\fsize}{#1\fsize}\selectfont}%
  \ifx\svgwidth\undefined%
    \setlength{\unitlength}{295.41875859bp}%
    \ifx\svgscale\undefined%
      \relax%
    \else%
      \setlength{\unitlength}{\unitlength * \real{\svgscale}}%
    \fi%
  \else%
    \setlength{\unitlength}{\svgwidth}%
  \fi%
  \global\let\svgwidth\undefined%
  \global\let\svgscale\undefined%
  \makeatother%
  \begin{picture}(1,0.26119572)%
    \lineheight{1}%
    \setlength\tabcolsep{0pt}%
    \put(0,0){\includegraphics[width=\unitlength,page=1]{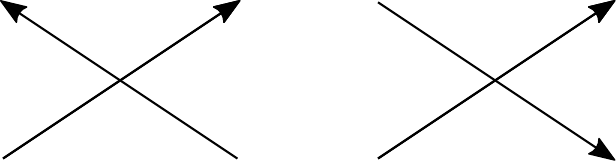}}%
    \put(0.07602553,0.07977283){\makebox(0,0)[lt]{\lineheight{1.25}\smash{\begin{tabular}[t]{l}$\zg$\end{tabular}}}}%
    \put(0.2842046,0.07977283){\makebox(0,0)[lt]{\lineheight{1.25}\smash{\begin{tabular}[t]{l}$\zg'$\end{tabular}}}}%
    \put(0.68533009,0.07977283){\makebox(0,0)[lt]{\lineheight{1.25}\smash{\begin{tabular}[t]{l}$\zg$\end{tabular}}}}%
    \put(0.89350891,0.07977283){\makebox(0,0)[lt]{\lineheight{1.25}\smash{\begin{tabular}[t]{l}$\zg'$\end{tabular}}}}%
  \end{picture}%
\endgroup%
}
\caption{Crossings of oriented 2-diagonal. In the left picture $\zg'$ crosses $\zg$ from right to left, and in the right picture $\zg'$ crosses $\zg$ from left to right.}
\label{fig crossdirection}
\end{center}
\end{figure}

\begin{lemma}\label{lem left to right}
Let $\zg, \zg'$ be oriented 2-diagonals in $\textup{Diag}(S)$ that cross.  
\begin{itemize}
\item[(a)] $\zg'$ crosses $\zg$ from right to left if and only if $\textup{Ext}^1(M_\zg, M_{\zg'})\not=0$.
\item[(b)] $\zg'$ crosses $\zg$ from left to right if and only if $\textup{Ext}^1(M_{\zg'}, M_\zg)\not=0$.
\end{itemize}
\end{lemma}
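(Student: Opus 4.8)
The plan is to reduce Lemma~\ref{lem left to right} to the already-cited correspondence between crossings of 2-diagonals and nonvanishing $\Ext^1$ in the 2-cluster category $\calc^2$ (the paragraph following Theorem~\ref{thm BM}), and then to pin down the \emph{direction} of the extension by a careful orientation bookkeeping using the parity labeling of the boundary vertices. So the content of the lemma is entirely the orientation statement: which of $\Ext^1(M_\zg,M_{\zg'})$, $\Ext^1(M_{\zg'},M_\zg)$ is the nonzero one, given the geometric picture in Figure~\ref{fig crossdirection}.

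First I would recall that in $\calc^2\cong\cald^b(\textup{mod}\,H)/\tau_\cald^{-1}[2]$ for $H$ of type $\mathbb{A}_{N-2}$, the indecomposable objects are (up to shift) indecomposable $H$-modules, i.e.\ intervals $[p,q]$ in the $\mathbb{A}_{N-2}$ quiver, and under the Baur–Marsh equivalence these correspond to 2-diagonals whose two endpoints on the $2N$-gon are determined by $p$ and $q$ (one endpoint odd, one even, as noted in the excerpt). The key structural fact is that a 2-pivot corresponds to an irreducible map, and a single crossing of $\zg$ by $\zg'$ corresponds (via the mesh/AR-structure of $\calc^2$, which is a cluster category hence has AR-triangles with $\tau=$ shift$^{-1}$ in the relevant sense) to an almost-split-type extension; concretely, by the $2$-Calabi–Yau-type property one has $\Ext^1(M_\zg,M_{\zg'})\cong D\Ext^1(M_{\zg'},\tau M_\zg)$ or the analogous formula for $\calc^2$, so exactly one of the two $\Ext$ spaces is nonzero unless the two diagonals are related by the shift. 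I would make precise which crossing pattern (Figure~\ref{fig crossdirection}, left vs.\ right) corresponds to the ``$M_{\zg'}$ sits above $M_\zg$ in a non-split short exact sequence'' picture by tracking a single 2-pivot: if $\zg'$ is obtained from $\zg$ by a 2-pivot, then $\zg'$ crosses $\zg$, and one reads off from the definition of 2-pivot (Definition~\ref{def 2pivot}) and the chosen odd$\to$even orientation whether $\zg'$ crosses from right to left or left to right; this is the base case.

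Then I would extend from the 2-pivot base case to an arbitrary crossing. The standard argument: if $\zg,\zg'$ cross, then iterating 2-pivots on $\zg'$ while keeping a crossing with $\zg$ connects $\zg'$ through a sequence of 2-pivots to a 2-diagonal that is a single 2-pivot away from $\zg$, and along such a path the side of the crossing (right-to-left vs.\ left-to-right, as defined via the orientation) is preserved — this is a purely planar observation about the $2N$-gon with the fixed clockwise labeling, using that a 2-pivot moves an endpoint by two boundary positions in a controlled direction and therefore cannot flip the orientation-determined side of an existing crossing. Correspondingly, on the algebra side, composing irreducible maps / walking along the AR-quiver of $\calc^2$ preserves which of the two $\Ext$-directions is nonzero, because $\Ext^1(M_\zg,-)$ and $\Ext^1(-,M_\zg)$ behave consistently under the mesh relations. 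Combining the base case with this propagation gives both (a) and (b); note (b) is formally equivalent to (a) applied with the roles of $\zg,\zg'$ swapped, together with the observation that ``$\zg'$ crosses $\zg$ from left to right'' is the same planar configuration as ``$\zg$ crosses $\zg'$ from right to left'', so strictly I only need to prove (a) and then invoke this symmetry of the defining pictures in Figure~\ref{fig crossdirection}.

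The main obstacle I anticipate is purely one of careful case-checking of the orientation conventions: making sure that the odd$\to$even orientation of 2-diagonals, the clockwise labeling $1,\dots,2N$, the clockwise rotation $R$ as shift, and the left/right convention of Figure~\ref{fig crossdirection} are all mutually consistent, so that the ``base case'' 2-pivot really does land on the asserted side. A clean way to do this, and the route I would actually take in the write-up, is to verify the statement in one small explicit example (say the hexagon $N=3$, or the octagon $N=4$, where $\scmp$ or $\calc^2$ is small enough to list all indecomposables and all extensions by hand), fixing the sign of the convention, and then argue that both sides of the claimed equivalence transform the same way under $R$ and under 2-pivots, so that a single verified instance propagates to all crossings by the transitivity of the 2-pivot action on crossing pairs. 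Everything else — the existence of the $\Ext$ and its one-dimensionality, the AR-structure of $\calc^2$ — is inherited from Theorem~\ref{thm BM} and the cited literature on 2-cluster categories.
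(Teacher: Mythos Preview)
Your base case is wrong: if $\zg'$ is obtained from $\zg$ by a 2-pivot, then $\zg$ and $\zg'$ share an endpoint (Definition~\ref{def 2pivot} fixes one endpoint), so they do \emph{not} cross. A 2-pivot corresponds to an irreducible morphism, hence to a nonzero $\Hom$, not to a nonzero $\Ext^1$. Consequently the whole propagation scheme collapses: you propose to walk from an arbitrary crossing pair down to ``a 2-diagonal that is a single 2-pivot away from $\zg$'', but that endpoint of the walk is precisely a pair that does not cross, so there is nothing to anchor the induction. The vaguer claims (that the crossing side is preserved under 2-pivots, that the $\Ext$-direction is preserved ``because of mesh relations'') would also need real arguments, but they are moot once the base case fails.

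The paper's argument avoids induction entirely and is much shorter. Given a crossing of $\zg$ (endpoints $a,x$) and $\zg'$ (endpoints $a',x'$), the nonzero $\Ext^1(M_\zg,M_{\zg'})$ is witnessed by a triangle
\[
M_{\zg'}\longrightarrow M_\epsilon\oplus M_{\epsilon'}\longrightarrow M_\zg\longrightarrow M_{\zg'}[1],
\]
whose middle terms are the two ``non-crossing'' diagonals of the quadrilateral on $a,a',x,x'$; each map is a composite of 2-pivots fixing one endpoint. The point is that $\epsilon,\epsilon'$ must themselves be 2-diagonals, i.e.\ have endpoints of opposite parity. This parity constraint singles out \emph{which} pair of opposite sides of the quadrilateral is $\{\epsilon,\epsilon'\}$, and reading off the induced orientations on $\zg,\zg'$ one sees immediately that $\zg'$ crosses $\zg$ from right to left. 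The converse is the same picture run backwards, and (b) follows from (a) by the symmetry you already noted. The orientation bookkeeping you were worried about is thus handled in one stroke by the parity of the middle terms, with no need for a small-example calibration or a propagation argument.
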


\begin{proof}
Let $\zg$ have endpoints $a,x$ and $\zg'$ have endpoints $a',x'$ as in Figure~\ref{fig:S} on the right.   

By Theorem~\ref{thm BM}, the crossing of $\zg, \zg'$ corresponds to an extension between $M_\zg, M_{\zg'}$ in the 2-cluster category. Moreover, $\textup{Ext}^1(M_\zg, M_{\zg'})\not=0$ if and only if there is a triangle in the 2-cluster category as follows, where at most one of $M_\ze,M_{\ze'}$ may be zero.  
\begin{equation}\label{eqS}
\xymatrix{
M_{\zg'}\ar[r]^-{\begin{bsmallmatrix}f\\f'\end{bsmallmatrix}} & M_{\epsilon}\oplus M_{\epsilon'} \ar[r]^-{\begin{bsmallmatrix}g & g'\end{bsmallmatrix}} & M_\zg \ar[r]& M_{\zg'}[1]}
\end{equation}
Here each $f, f'$ is a sequence of irreducible morphisms corresponding to a sequence of 2-pivots such that each 2-pivot in $f$  fixes the endpoint $a'$ of $\zg'$  and takes $\zg'$ to $\epsilon$ and each 2-pivot in $f'$  fixes the endpoint $ x'$ of $\zg'$  and takes $\zg'$ to $\epsilon'$.  Similarly, $g,g'$ each correspond to a sequence of 2-pivots such that each 2-pivot in $g$ fixes the endpoint $x$ of $\epsilon$ and takes $\epsilon$ to $\zg$, and  each 2-pivot in $g'$ fixes the endpoint $a$ of $\epsilon'$ and takes $\epsilon'$ to $\zg$.  In particular, $\epsilon$ and $ \epsilon'$ are 2-diagonals.

Without loss of generality we may assume that $\epsilon$ is oriented from $a'$ to $x$, since these are 2-diagonals.  This means that $\zg'$ is oriented from $a'$ to $x'$ and $\zg$ is oriented from $a$ to $x$, see Figure~\ref{fig:S} on the right.  Hence, we see that $\zg'$ crosses $\zg$ from right to left if $\textup{Ext}^1(M_\zg, M_{\zg'})\not=0$.  Note that if instead we were to suppose that $\epsilon$ is oriented from $x$ to $a'$, we would still conclude that $\zg'$ crosses $\zg$ from right to left, because both $\zg, \zg'$ would change orientation.  Conversely, if $\zg'$ crosses $\zg$ from right to left, then we can construct the triangle in \eqref{eqS} with the desired properties.  Note that the line segments with endpoints $x,x'$ and $a,a'$ respectively are not 2-diagonals, because of the orientation on $\zg$ and $\zg'$, and hence $\epsilon, \epsilon'$ correspond to the middle terms of the triangle.  
This shows part (a), and part (b) follows directly from (a).
\end{proof}


\section{Dimer tree algebras}
In this section, we define a  class of Jacobian algebras that are the subject of this paper. An example is given in the introduction.

 A \emph{chordless cycle} in a quiver $Q$ is a cyclic path $C=x_0\to x_1 \to\dots\to x_t\to x_0$ such that $x_i\ne x_j$ if $i\ne j$ and the full subquiver on vertices $x_0,x_1,\dots, x_t$ is equal to $C$. 
The arrows that lie in exactly one chordless cycle will be called {\em boundary arrows} and those that lie in two or more chordless cycles {\em interior arrows} of $Q$. 

\begin{definition}
  The \emph{dual graph} $G$ of $Q$ is defined as follows. The set of vertices $G_0$ is the union of 
the set of chordless cycles of $Q$ and the set of boundary arrows of $Q$. 
The set of edges $G_1$ is the union of two sets
called the set of \emph{trunk edges} and the set of \emph{leaf branches}.  A trunk edge $\xymatrix{C\ar@{-}[r]^\za&C'}$ is drawn between any pair of chordless cycles  $(C,C')$ that share an arrow $\za$. A leaf branch $\xymatrix{C\ar@{-}[r]^\za &\za}$ is drawn between any pair $(C,\za)$ where $C$ is a chordless cycle and $\za$ is a boundary arrow such that $\za $ is contained in $C$.
\end{definition}

\begin{definition}[The quiver]
\label{def Q}  Throughout the paper,  we let $Q$ be a finite connected quiver without loops and 2-cycles satisfying the following conditions. 
\begin{itemize}
\item [(Q1)] Every arrow of $Q$ lies in at least one chordless cycle.
\item[(Q2)]\label{tree} The dual graph of $Q$ is a tree. 
\item[(Q3)] The boundary arrows of $Q$ form a simple (non-oriented) cycle.
\end{itemize}
\end{definition}

The following properties follow easily from the definition.
\begin{prop}\label{prop Q}\cite[Proposition 3.4]{SS}  Let $Q$ be a quiver satisfying Definition~\ref{def Q}.
 \begin{enumerate}
\item $Q$ has no parallel arrows.
\item $Q$ is planar.
\item For all arrows $\za $ of $Q$,
 \begin{enumerate}
\item[(a)] either $\za$ lies in exactly one chordless cycle, 
\item[(b)] or $\za$ lies in exactly two chordless cycles.\end{enumerate}
\item Any two chordless cycles in $Q$ share at most one arrow.
\end{enumerate}
\end{prop}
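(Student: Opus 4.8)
The plan is to derive all four statements directly from the two defining conditions (Q1) and (Q2), treating them as consequences of the tree structure of the dual graph $G$ together with planarity considerations forced by that structure. I would organize the argument so that statements (3) and (4) are proved first, since (1) and (2) will follow from them once the local picture at each arrow is understood.

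First I would prove (3). Condition (Q1) says every arrow lies in at least one chordless cycle. Suppose an arrow $\za$ lies in three or more chordless cycles $C_1,C_2,C_3$. In the dual graph $G$, each pair $(C_i,C_j)$ shares the arrow $\za$, so there would be trunk edges $C_1\!-\!C_2$, $C_2\!-\!C_3$, and $C_1\!-\!C_3$, producing a triangle in $G$. This contradicts (Q2), since a tree has no cycles. (One should also observe that distinct chordless cycles through $\za$ give distinct vertices of $G$, which is immediate from the definition.) Hence each arrow lies in exactly one or exactly two chordless cycles, giving cases (a) and (b); the dichotomy also yields the stated definition of boundary versus interior arrows.

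Next, for statement (4), suppose two distinct chordless cycles $C$ and $C'$ share two different arrows $\za$ and $\zb$. By the definition of the dual graph there would then be two distinct trunk edges between the vertices $C$ and $C'$ of $G$, i.e. a multiple edge, which creates a cycle of length two in $G$, again contradicting that $G$ is a tree. (Here I would note that $G$ is taken to be a genuine graph/tree, so multiple edges are not allowed; alternatively, two trunk edges between the same pair already violate the tree condition.) Thus any two chordless cycles share at most one arrow.

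Finally I would dispatch (1) and (2). For (1), if $Q$ had two parallel arrows $\za,\zb\colon i\to j$, then by (Q1) $\za$ lies in a chordless cycle $C$; but then $C$ together with $\zb$ would either violate chordlessness (the subquiver on the vertices of $C$ would properly contain $C$, since it also contains $\zb$) or force a $2$-cycle $\za\zb^{-1}$-type configuration — in either case contradicting the hypotheses on $Q$ (no loops, no $2$-cycles, and the chordless-cycle condition). For (2), planarity, I would argue by induction on the number of chordless cycles, building $Q$ up along the tree $G$: start with a single chordless cycle, which is a planar polygon, and each time a new vertex of $G$ is attached (a new chordless cycle glued along a single shared arrow, by statement (4), or a new boundary arrow), the gluing is along one arrow of an already-drawn face, so the new cycle can be drawn in the complementary region without introducing crossings. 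Since $G$ is a tree, this process terminates and produces a planar embedding of $Q$.

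The main obstacle I anticipate is statement (2), planarity: the inductive gluing argument needs the fact that when a new chordless cycle is attached along a single arrow $\za$ of $Q$, that arrow $\za$ is on the boundary of a face in the current planar embedding into which the new cycle can be inserted — this requires keeping track of the embedding as an invariant in the induction and checking that attaching along an interior arrow (one already shared by two cycles) cannot happen, which is where statement (3)(b) is used to bound the multiplicity. Statements (1), (3), (4) are essentially immediate from the no-cycles property of the tree $G$ and are straightforward; it is only the careful bookkeeping of the planar embedding that requires attention, and since this is cited from \cite[Proposition 3.4]{SS}, I would keep the argument brief and refer there for the full details.
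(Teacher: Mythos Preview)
The paper does not actually prove this proposition; it is merely stated with the remark ``The following properties follow easily from the definition'' and a citation to \cite[Proposition 3.4]{SS}. So there is no proof in the present paper to compare against, and your proposal supplies exactly the kind of short argument one would expect from that remark.

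Your arguments for (3) and (4) are clean and correct: three chordless cycles through one arrow give a triangle in $G$, and two shared arrows give a double edge, both contradicting the tree hypothesis. The inductive planarity argument for (2) is the natural one and works because, by (3) and (4), each new chordless cycle is glued along a single arrow that is currently a boundary arrow of the planar piece built so far. The only place your write-up is slightly loose is (1): the ``force a 2-cycle'' alternative is spurious since parallel arrows point the same way; the real argument is just the first one you give, namely that if $\za,\zb\colon i\to j$ are parallel and $\za$ lies in a chordless cycle $C$, then $i,j$ are consecutive on $C$ and the full subquiver on the vertices of $C$ contains $\zb$ as well, so it strictly contains $C$, contradicting chordlessness --- hence $\za$ lies in no chordless cycle, violating (Q1). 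With that tightened, your proof is complete and matches in spirit the ``follows easily'' claim in the paper.
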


Since the dual graph $G$ of $Q$  is a tree, the quiver contains a chordless cycle $C_0$ that contains exactly one interior arrow. If $C$ is any chordless cycle in $Q$ we define the distance $d(C)$ of $C$ from $C_0$ to be  the length of the unique path from $C_0$ to $C$ in $G$.

\begin{definition}\label{def B} 
Let $Q$ be a quiver that satisfies Definition~\ref{def Q} and let $W=\sum_{C} (-1)^{d(C)} C$ be its potential, where the sum is taken over all chordless cycles of $Q$. The the Jacobian algebra $B=\textup{Jac(Q,W)}$ is called a \emph{dimer tree algebra}.
\end{definition}
In other words, the chordless cycle $C$ appears with a positive sign in the potential if and only if the path from $C_0$ to $C$ is of even length.

\begin{remark}
 \label{rem dimer tree}
 A dimer tree algebra is not strictly speaking a dimer algebra, since the boundary arrows in a dimer tree algebra also induce relations. Still there are many similarities, in particular, the zigzag paths in a dimer algebra correspond to our cycle paths defined below.
\end{remark}

An algebra is called \emph{schurian} if $\dim\Hom(P(i),P(j))\le 1$ for all vertices $i,j$ of $Q$.
\begin{prop}\cite[Corollary 3.31]{SS}
 \label{prop schurian}
 Every dimer tree algebra is a schurian algebra. In particular, any two nonzero parallel paths are equal, and any non-constant cyclic path is zero.
\end{prop}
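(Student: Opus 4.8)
The plan is to spell out the Jacobian relations and then, exploiting the tree shape of the dual graph, to establish two facts: (i) any two parallel paths that are nonzero in $B$ have the same image in $B$, and (ii) $\dim_{\kb}e_iBe_j\le 1$ for all vertices $i,j$, i.e.\ $B$ is schurian. The third assertion of the proposition then follows formally: $\mathrm{rad}\,B$ is the image of the arrow ideal of $\kb Q$ (since $B/\mathrm{rad}\,B\cong\kb^{Q_0}$) and contains no nonzero idempotent, so a nonconstant cyclic path at $i$ has image in $e_iBe_i\cap\mathrm{rad}\,B=\kb e_i\cap\mathrm{rad}\,B=0$. For the relations themselves: the two chordless cycles $C,C'$ through an interior arrow $\alpha$ are adjacent in the dual graph (by the definition of trunk edges) and so enter $W=\sum_C(-1)^{d(C)}C$ with opposite signs; writing a chordless cycle through $\alpha$ as $C=\alpha\,p^C_\alpha$, the relation $\partial_\alpha W=0$ reads $p^C_\alpha=p^{C'}_\alpha$, with coefficient $1$, when $\alpha$ is interior, and $p^C_\alpha=0$ when $\alpha$ is a boundary arrow; in particular any path making a full turn $p^C_\gamma$ around a chordless cycle is $0$ in $B$.

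I would run the argument through a rewriting system on the paths of $Q$: keep every zero relation (the path $p^C_\gamma$ reduces to $0$) and orient every commutation relation so that $p^C_\alpha$ gets replaced by $p^{C'}_\alpha$ whenever this moves the path toward the distinguished cycle $C_0$ in the dual tree. One first checks that the system terminates, via a monovariant built from the dual-graph distances of the arrows a path uses (here one uses that $Q$ has no parallel arrows and that its bounded faces are precisely the chordless cycles), and then that it is locally confluent: since two chordless cycles share at most one arrow (Proposition~\ref{prop Q}(4)) and the dual graph is a tree, any overlap between two of the rules occurs inside a single chordless cycle or across a single shared arrow, so there are only finitely many overlap configurations, each of which one resolves to a common reduction. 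By the diamond lemma the reduction system is then confluent, so every path has a well-defined normal form and the normal-form paths form a $\kb$-basis of $B$; finally one shows, again from the tree structure, that for each ordered pair $(i,j)$ there is at most one normal-form path from $i$ to $j$, because a second one would have to differ from the first by a full turn around some face, which no normal form contains. Facts (i) and (ii) drop out, and with them the proposition.

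The principal difficulty is the confluence check — organising the overlap configurations of zero relations among themselves, of commutation relations among themselves, and of the two kinds with each other, and verifying that each resolves. This is precisely where the hypothesis that the dual graph is a \emph{tree}, and not merely that $Q$ is planar, does the work: a tree has no ``cycle of cycles'', so a path cannot wind nontrivially around a region of $Q$, and the only obstructions to uniqueness are the local full turns already dealt with by the zero relations. A more hands-on alternative, which I would use if the overlap analysis turns out to be unwieldy, is an induction on the number of chordless cycles: peel off a leaf chordless cycle $C$, which meets the rest of $Q$ in a single interior arrow $\beta$; writing $C=\beta\,q$, the vertices interior to the ``pendant'' path $q$ lie on no other chordless cycle, so any path visiting the pendant interior must traverse all of $q$, and rewriting each such path via $q=q'$ (where $C'=\beta\,q'$ is the neighbouring cycle) reduces the computation of $B$ to that for a quiver with one fewer chordless cycle of essentially the same type, to which the inductive hypothesis applies. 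Either way the combinatorial core is the same: controlling how paths can wrap around the chordless cycles of $Q$.
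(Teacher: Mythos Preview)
The paper does not supply its own proof of this proposition: it is quoted verbatim from \cite[Corollary~3.31]{SS} and used as a black box. So there is no in-paper argument to compare against, and the honest comparison is between your sketch and the (unstated) proof in \cite{SS}.

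On its own terms, your proposal is a reasonable outline but not yet a proof; the two places where it is thin are exactly the ones you flag. First, termination of your rewriting system is asserted via ``a monovariant built from the dual-graph distances of the arrows a path uses'', but no such monovariant is exhibited, and the obvious candidates do not obviously work: replacing $p^{C}_{\alpha}$ by $p^{C'}_{\alpha}$ can change both the length of the path and the set of chordless cycles its arrows touch (arrows of $C'$ may be interior arrows leading into further branches of the dual tree), so you must actually produce a well-founded quantity and check it drops under every rule. Second, and more seriously, the step ``for each ordered pair $(i,j)$ there is at most one normal-form path from $i$ to $j$, because a second one would have to differ from the first by a full turn around some face'' is not justified. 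Confluence only gives you that each \emph{element} of $B$ has a unique normal form; it does not by itself bound the number of distinct nonzero normal forms with given source and target. Two normal-form paths $p,q$ from $i$ to $j$ bound, by planarity, a union of bounded faces, but nothing you have proved prevents both $p$ and $q$ from skirting that region without either containing a full turn $p^{C}_{\gamma}$ around a single face. You need an additional argument here (for instance, an induction on the number of faces enclosed, pushing one path across an extremal face using a commutation relation and invoking confluence to stay in normal form, or the leaf-peeling induction you mention at the end), and that argument is the real content of the proposition.

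Your alternative induction on the number of chordless cycles is likely the cleaner route and is closer in spirit to how results of this type are usually established for dimer-type algebras; if you pursue it, be careful that after peeling a leaf cycle the remaining algebra is again a dimer tree algebra (the shared arrow $\beta$ becomes a boundary arrow and acquires the zero relation $p^{C'}_{\beta}=0$), and that every path in the original algebra really can be rewritten, up to the relations, to avoid the pendant vertices.
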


\subsection{Cycle paths, weight, and total weight}
We review some definitions from \cite{SS} relating to the quiver $Q$.  Recall that an arrow in $Q$ is called a boundary arrow if it lies in exactly one chordless cycle.

\begin{definition}
 \label{def cycle path 2}
 Let $\za$ be a  boundary arrow in $Q$.  
 \begin{itemize}
\item[(a)] The 
 \emph{cycle path} of $\za$ is the unique path 
 $\mathfrak{c}(\za)=\za_1\za_2\cdots\za_{\ell(\za)}$ such that 
 \begin{itemize}
\item [(i)] $\za_1=\za$ and $\za_{\ell(\za)}$  are boundary arrows, and 
 $\za_2,\ldots,\za_{\ell(\za)-1}$ are interior arrows,
\item [(ii)] every subpath of length two $\za_i\za_{i+1}$, 
 is a subpath of a chordless cycle $C_i$, and $C_i\ne C_j$ if $i\ne j$. 
\end{itemize}

\item[(b)] The \emph{weight} $\text{w}(\za)$ of $\za$ is defined as
\[\textup{w}(\za) =\left\{ 
\begin{array}
 {ll} 1&\textup{if the length of  $\mathfrak{c}(\za)$ is odd;}\\
 2&\textup{if the length of  $\mathfrak{c}(\za)$ is even.}\\
\end{array} \right.\]

\item[(c)] Dually, the path $\mathfrak{c}(\za)$ is uniquely determined by the last arrow $\za_{\ell(\za)}$, and it is called the \emph{cocycle path} of $\za_{\ell(\za)}$.  Define the \emph{coweight} $\overline{\text{w}}(\za_{\ell(\za)})$ of $\za_{\ell(\za)}$ to be equal to $\text{w}(\za)$.

\end{itemize}
\end{definition}

Define 
\[\sum_\za \textup{w}(\za) = \sum_\za \overline{\textup{w}}(\za),\]
where the sum is over all boundary arrows of $Q$, to be the \emph{total weight} of $Q$.  The total weight is an important statistic of the quiver because of the following result.

\begin{prop}\cite[Corollary 3.17]{SS}
 \label{cor size of S}
The number of boundary edges in the checkerboard polygon $\cals$ is equal to the total weight of $Q$. 
\end{prop}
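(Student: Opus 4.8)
The plan is to reduce the statement to one local claim about the construction of $\cals$ from \cite{SS}: the boundary of $\cals$ is cut into consecutive arcs $A(\alpha)$, one for each boundary arrow $\alpha$ of $Q$, listed in the cyclic order in which the boundary arrows occur around the (planar) quiver $Q$, and the arc $A(\alpha)$ consists of exactly $\wt(\alpha)$ boundary edges. Granting this, the number of boundary edges of $\cals$ is $\sum_\alpha\wt(\alpha)$, the sum over all boundary arrows, which is the total weight of $Q$ by definition, so the proposition follows.

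To establish the arc claim I would induct on the number $r$ of chordless cycles of $Q$. In the base case $r=1$ the quiver is a single oriented $\ell$-cycle, all of whose arrows are boundary arrows; the successor of a boundary arrow inside the cycle is again a boundary arrow, so every cycle path has length $2$ and every weight equals $2$, giving $\sum_\alpha\wt(\alpha)=2\ell$, and $\cals$ is the $2\ell$-gon whose boundary edges group into the size-$2$ arcs $A(\alpha)$. For $r>1$, pick a chordless cycle $C$ that shares an arrow with exactly one other chordless cycle, so that $C$ meets the rest of $Q$ in a single interior arrow $\beta$ while all other arrows of $C$ are boundary arrows of $Q$; deleting $C$ yields a dimer tree quiver $Q'$ with $r-1$ chordless cycles in which $\beta$ has become a boundary arrow, and $\cals$ is obtained from the checkerboard polygon $\cals'$ of $Q'$ by gluing the polygon associated to $C$ along the arc $A'(\beta)$. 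The inductive hypothesis gives the arc claim for $(Q',\cals')$.

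It then remains to compare $(Q,\cals)$ with $(Q',\cals')$. On the quiver side I would verify that the only cycle paths whose length, hence weight, is affected by deleting $C$ are the short cycle paths internal to $C$ (which can be listed explicitly) and the essentially unique cycle path of $Q$ that runs into $C$ through $\beta$, which in $Q'$ is truncated precisely at $\beta$. On the polygon side, gluing in the polygon of $C$ replaces $A'(\beta)$ by the arcs of the boundary arrows of $C$, subdivided according to a parity condition built into the construction of $\cals$. The main obstacle, and the heart of the argument, is to identify this parity condition with the parity of the number of arrows of a cycle path traversed before it reaches $\beta$ — the very parity that defines $\wt$; this forces $\#A(\beta)$ to be $1$ or $2$ in agreement with the truncated cycle path and makes the two sides of the comparison agree edge for edge. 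Once the recursive description of $\cals$ from \cite{SS} is made explicit, what remains is a finite local check; conceptually, the recursion is designed exactly so that each boundary arrow contributes $\wt(\alpha)$ edges to the boundary of $\cals$.
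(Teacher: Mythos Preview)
This proposition is not proved in the present paper at all: it is quoted verbatim as \cite[Corollary~3.17]{SS} and no argument is supplied here. There is therefore nothing in this paper against which to compare your attempt.

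That said, a few remarks on your outline. The base case is fine and matches what the paper records in its appendix: for a single $\ell$-cycle every cycle path has length $2$, every weight equals $2$, and the associated polygon is the $2\ell$-gon. The inductive strategy---removing a leaf chordless cycle $C$ and tracking how weights and boundary edges change---is the natural one and is indeed how the checkerboard polygon is assembled recursively in \cite{SS}. However, your write-up is not yet a proof: the crucial step (``identify this parity condition with the parity of the number of arrows of a cycle path traversed before it reaches $\beta$'') is asserted rather than established, and it presupposes a recursive description of $\cals$ that you have not stated. You also need to be careful about \emph{which} cycle paths change when $C$ is removed: it is not just one path that runs into $C$ through $\beta$, but potentially two (the cycle path entering $C$ and the cocycle path leaving $C$), and the bookkeeping of how their weights in $Q$ relate to $\wt_{Q'}(\beta)$ and $\cwt_{Q'}(\beta)$ has to be done explicitly. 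Until those computations are written out against the actual construction in \cite{SS}, what you have is a plausible plan rather than a complete argument.
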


In the example of Figure~\ref{fig:dual_graph}, we list the cycle paths and their corresponding weights below.   In particular, we see that the total weight equals 14, which is the same as the size of the corresponding checkerboard polygon $\mathcal{S}$. 

\[
\begin{matrix}
\underline{\text{cycle path}} && \underline{\text{weight}}\\
1\to 2\to 3\to 4\to 6\to 9 && 1\\
3\to 1\to 2 && 2\\
8\to 3\to 4 \to 5 && 1\\
7\to 8 \to 3 && 2\\
6\to 7 \to 8 && 2\\
6\to 9 \to 4 && 2\\
9\to 4 \to 6 \to 7 && 1\\
4\to 5 \to 2 && 2\\
5\to 2\to 3\to 1 && 1
\end{matrix}
\]

\begin{corollary}
The total weight of a dimer tree algebra is a derived invariant and a singular invariant of the algebra. 
\end{corollary}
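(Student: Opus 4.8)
The plan is to deduce this from Theorem~\ref{thm main} and the elementary observation that the integer $N$ is recovered from the number of indecomposable objects of $\diag$.

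First I reduce the derived invariance to the singular invariance. If $B$ and $B'$ are derived equivalent, a triangle equivalence $\cald^b(\textup{mod}\,B)\xrightarrow{\sim}\cald^b(\textup{mod}\,B')$ automatically restricts to a triangle equivalence between the subcategories of perfect complexes: an object $X$ of $\cald^b(\textup{mod}\,A)$ lies in $\cald^b_{perf}(A)$ if and only if, for every object $Y$, one has $\Hom(X,Y[n])=0$ for all but finitely many $n\in\mathbb Z$, and this condition is intrinsic to the triangulated category. Passing to Verdier quotients, $B$ and $B'$ are then singularly equivalent. So it suffices to show that the total weight is preserved under singular equivalence.

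Now suppose $B$ and $B'$ are singularly equivalent dimer tree algebras of total weights $2N$ and $2N'$. By Buchweitz's theorem \cite{Bu}, the singularity category of each of them is triangle equivalent to its stable Cohen--Macaulay category, so $\scmp\,B$ and $\scmp\,B'$ are triangle equivalent, in particular equivalent as $\kb$-linear categories. By Theorem~\ref{thm main}, $\scmp\,B\simeq\diag$ where $\cals$ is a $2N$-gon, and likewise $\scmp\,B'\simeq\textup{Diag}(\cals')$ with $\cals'$ a $2N'$-gon. Hence $\diag\simeq\textup{Diag}(\cals')$, and in particular these categories have the same number of isomorphism classes of indecomposable objects. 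By Theorem~\ref{thm BM} (equivalently, by counting $2$-diagonals, as recalled in the introduction), this number equals $N(N-2)$ for a $2N$-gon. Since $t\mapsto t(t-2)=(t-1)^2-1$ is strictly increasing for $t\ge 1$, the equality $N(N-2)=N'(N'-2)$ forces $N=N'$, hence $2N=2N'$; and $2N$ is read off the quiver of $B$ via Definition~\ref{def cycle path 2} and Proposition~\ref{cor size of S}.

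The only substantive input is Theorem~\ref{thm main} itself, which bridges the (generally wild) algebra $B$ and the combinatorial category $\diag$; the remaining ingredients are the standard passage from derived to singular equivalence and the injectivity of $N\mapsto N(N-2)$. So the genuine difficulty has already been overcome by the time this corollary is stated, and the main point to be careful about is simply that the counting argument only uses an equivalence of additive (Krull--Schmidt) categories, which is provided for free by the triangle equivalence of singularity categories.
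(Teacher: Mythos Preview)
Your proof is correct and follows the intended line: the paper does not supply a separate proof of this corollary, treating it as an immediate consequence of the main results together with the count of indecomposables. One minor observation: you invoke Theorem~\ref{thm main}, but the weaker Theorem~\ref{thm algo} already furnishes the triangle equivalence $\scmp\,B\cong\diags$ with $S$ a $2N$-gon, which is all the counting argument needs; the additional checkerboard structure in Theorem~\ref{thm main} plays no role here.
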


\begin{remark}
In the dimer terminology, a cycle path is usually called a zigzag path.  In the checkerboard polygon the cycle paths are realized by going clockwise around the white regions. 
\end{remark}

\section{A triangle equivalence $\scmp \,B\cong \diags$}\label{sect 1}
In this section, we generalize a result of \cite{Lu} which will show that the category $\scmp \,B$ of non-projective syzygies over $B$ is triangle equivalent to the category $\diags$ of 2-diagonals of a polygon $S$, see Theorem~\ref{thm algo}.

\subsection{Recollections} We start by recalling several results from the literature.
The following result was proved by Chen.
\begin{thm}
 \label{thm chen} \cite[Proposition 3.1]{Chen}
 Let $A$ be a finite dimensional $\kb$-algebra, ${}_AM$ a left $A$-module and $N_A$ a right $A$-module. Let $\phi\colon M\otimes_{\kb} N\to A$ be a monomorphism. Then $\im \,\phi$ is a two-sided ideal in $A$. Assume further that $(\im\, \phi) M=0$ and $N(\im\, \phi)=0$. Define the matrix algebra
 \[\zG=\begin{pmatrix} A&M\\N&\kb\end{pmatrix},\]
 whose multiplication is given by the formula
 \[ 
\begin{pmatrix}
 a&m\\n&\zl 
\end{pmatrix}
\begin{pmatrix}
 a'&m'\\n'&\zl' 
\end{pmatrix}
=
\begin{pmatrix}
 aa'+\phi(m\otimes n')&am'+\zl'm\\
 na'+\zl n'&\zl\zl' 
\end{pmatrix}.
\]
 Then there is a triangle equivalence between the singularity categories of $\zG$ and $A/\im \,\phi$. 
\end{thm}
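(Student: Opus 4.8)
The plan is to prove Theorem~\ref{thm chen} by an explicit analysis of the module categories of $\zG$ and $A/\im\,\phi$, following the strategy of comparing their singularity categories through an intermediate recollement. First I would set $I=\im\,\phi$, which by hypothesis is a two-sided ideal of $A$ with $IM=0$ and $NI=0$, and observe that the idempotent $e=\begin{psmallmatrix}1&0\\0&0\end{psmallmatrix}$ together with $f=\begin{psmallmatrix}0&0\\0&1\end{psmallmatrix}$ decomposes $\zG$. The key structural observation is that $\zG/\zG e\zG \cong \kb$ and $e\zG e\cong A$, so there is a standard recollement relating $\cald^b(\zG)$, $\cald^b(A)$, and $\cald^b(\kb)$; since $\kb$ is semisimple, $\cald^b(\kb)=\cald^b_{perf}(\kb)$, which should let me deduce a singular equivalence $\cald_{sg}(\zG)\cong\cald_{sg}(A)$ provided one checks that the relevant functors preserve perfect complexes. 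This would be the easy half if $I$ were zero; the role of $\phi$ is precisely to twist the multiplication so that the \emph{other} comparison —between $\zG$ and $A/I$ rather than $A$— becomes the natural one.

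Next I would make the reduction to $A/I$ precise. The point is that the $\zG$-module $\Mbar = M$, viewed via the matrix structure, and the $\zG$-module $N$ are "annihilated" in a way that on the level of the stable/singularity category kills the difference between $A$ and $A/I$. Concretely I would analyze the projective $\zG$-modules: the indecomposable projectives are $e_i\zG$ for $i\in Q_0$ (built from $e\zG e = A$ together with the $M$-component) and the extra projective $f\zG = \begin{psmallmatrix}0&0\\N&\kb\end{psmallmatrix}$. Computing syzygies of simple $\zG$-modules, one sees that the simple at the new vertex has syzygy built from $N$, and because $N I = 0$, resolving further only ever sees $A/I$-module structure; symmetrically the condition $IM=0$ controls the $M$-side. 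I would then exhibit a pair of exact functors between $\cmp\,\zG$ and $\cmp\,(A/I)$ — restriction/corestriction along the algebra surjection and a suitable induction — and check they are mutually inverse up to projectives, i.e. descend to inverse triangle functors on the stable categories. Using Buchweitz's theorem (already recalled in the excerpt) to identify $\scmp$ with $\cald_{sg}$ for Iwanaga–Gorenstein algebras, or alternatively working directly with $\cald_{sg}$, this yields the claimed triangle equivalence $\cald_{sg}(\zG)\simeq \cald_{sg}(A/I)$.

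The main obstacle I expect is verifying that the comparison functors genuinely induce \emph{triangle} equivalences and not merely equivalences of underlying categories: one must track the shift functor (which in $\cald_{sg}$ is $\zO^{-1}$) and the triangulated structure through the recollement, and confirm that the extra vertex contributes nothing to the singularity category — that is, that $f\zG$ and the complexes supported at the new vertex lie in $\cald^b_{perf}(\zG)$. This is where the hypotheses $IM=0$ and $NI=0$ do the real work: they guarantee that the syzygy of the simple module at the new vertex stabilizes after finitely many steps into $\add$ of modules coming from $A/I$, so that the only non-perfect part of $\cald^b(\zG)$ is equivalent to the non-perfect part of $\cald^b(A/I)$. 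I would organize this as: (1) set up the recollement from the idempotent $e$; (2) identify $e\zG e$, $\zG/\zG e\zG$, and the endofunctors, and check $\phi$ being a monomorphism ensures $I\cong M\otimes_\kb N$ as bimodules so the arithmetic is consistent; (3) compute projective resolutions of $\zG$-simples and isolate the perfect part; (4) deduce the singular equivalence and then transport it through Buchweitz to the statement about $\scmp$. The cleanest route, which I would ultimately follow, is simply to cite that this is \cite[Proposition 3.1]{Chen} and invoke it as a black box in the subsequent argument, since reproving it in full would duplicate Chen's work; but the sketch above is the mechanism underlying it.
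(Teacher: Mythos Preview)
The paper does not prove this statement; it is quoted as \cite[Proposition 3.1]{Chen} and used as a black box, with no argument given beyond the introductory sentence ``The following result was proved by Chen.'' Your final recommendation---to cite Chen and move on---is therefore exactly what the paper does.

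Regarding your sketch: the instinct to use an idempotent recollement is correct, but the natural idempotent here is $f=\begin{psmallmatrix}0&0\\0&1\end{psmallmatrix}$ rather than $e$. One computes $\zG f\zG=\begin{psmallmatrix}I&M\\N&\kb\end{psmallmatrix}$, so that $\zG/\zG f\zG\cong A/I$ directly, while $f\zG f\cong\kb$ has global dimension zero. Chen's actual argument proceeds by showing that the surjection $\zG\to A/I$ is a homological epimorphism whose kernel has finite projective dimension as a one-sided module (this is where the hypotheses $\phi$ mono, $IM=0$, $NI=0$ enter), and then invoking his general criterion that such maps induce triangle equivalences on singularity categories. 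Your route through $e$ would compare $\zG$ to $e\zG e=A$ rather than $A/I$, which, as you noticed, is not quite the target; the pivot you describe to a hands-on syzygy computation could in principle be made to work but is more laborious than Chen's homological-epimorphism machinery. None of this affects the present paper, which only needs the statement.
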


We will need the following  corollary about the special case of one-point extensions and co-extensions.
With the notation above, we say that \begin{enumerate}
\item $\zG$ is a \emph{one-point extension} of $A$ if $M=0$ and $N$ is a projective module.
\item $\zG$ is a \emph{one-point co-extension} of $A$ if $N=0$ and (the dual of) $M$ is an injective module. 
\end{enumerate}
In both cases the tensor product $M\otimes_\kb N$ is trivial and $\phi$ is the zero morphism.
\begin{corollary}
 \label{cor Chen 1}
 If $\zG$ is a one-point extension or a one-point co-extension of $A$ then $\zG$ and $A$ are singular equivalent.
\end{corollary}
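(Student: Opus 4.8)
The plan is to apply Theorem~\ref{thm chen} directly, verifying that the one-point extension and one-point co-extension constructions are instances of the matrix algebra $\zG$ with a vanishing ideal, so that the conclusion $\cald_{sg}(\zG)\simeq \cald_{sg}(A/\im\,\phi) = \cald_{sg}(A)$ follows immediately.

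First I would treat the one-point extension case. By definition, $M=0$ and $N=N_A$ is a projective right $A$-module, so the matrix algebra is
\[
\zG=\begin{pmatrix} A&0\\ N&\kb\end{pmatrix},
\]
with multiplication obtained from the formula in Theorem~\ref{thm chen} by setting all $m$-entries to zero. Since $M\otimes_\kb N = 0$, the map $\phi\colon M\otimes_\kb N\to A$ is the zero morphism, which is trivially a monomorphism (its domain is the zero module). The hypotheses $(\im\,\phi)M=0$ and $N(\im\,\phi)=0$ hold vacuously because $\im\,\phi=0$. Theorem~\ref{thm chen} then gives a triangle equivalence between the singularity categories of $\zG$ and $A/\im\,\phi = A/0 = A$, which is exactly the assertion that $\zG$ and $A$ are singular equivalent.

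The one-point co-extension case is dual: here $N=0$ and $M={}_AM$ is an $A$-module whose $\kb$-dual $DM$ is injective, so
\[
\zG=\begin{pmatrix} A&M\\ 0&\kb\end{pmatrix}.
\]
Again $M\otimes_\kb N=0$, so $\phi$ is the zero monomorphism and $\im\,\phi=0$, whence $A/\im\,\phi=A$ and the conditions $(\im\,\phi)M=0$, $N(\im\,\phi)=0$ are automatic. Applying Theorem~\ref{thm chen} once more yields the triangle equivalence between the singularity categories of $\zG$ and $A$, proving the claim in this case as well.

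There is essentially no obstacle here: the corollary is a formal specialization of Theorem~\ref{thm chen} to the degenerate situation where one of the bimodules $M$, $N$ vanishes, forcing $\phi=0$ and collapsing all the technical hypotheses. The only point worth a sentence of care is that the projectivity of $N$ (resp.\ injectivity of $DM$) plays no role in the \emph{statement} we are proving — it is recorded in the definition of one-point (co-)extension so that $\zG$ genuinely is a one-point extension in the classical sense (the extra simple has the expected projective cover / injective envelope structure) — but it is not needed to invoke Theorem~\ref{thm chen}. So the proof is just: observe $M\otimes_\kb N = 0$, hence $\phi = 0$ and $\im\,\phi = 0$; apply Theorem~\ref{thm chen}.
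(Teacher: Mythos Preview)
Your proof is correct and matches the paper's own reasoning: the paper simply observes, immediately before stating the corollary, that in both cases $M\otimes_\kb N=0$ and $\phi$ is the zero morphism, which is exactly your argument. Your additional remark that the projectivity of $N$ (resp.\ injectivity of $DM$) is not actually needed to invoke Theorem~\ref{thm chen} is a fair observation.
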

\begin{remark}
If $M=\oplus P(i)$ then  the quiver of the one-point extension $\zG$ is obtained from the quiver of $A$ by adding one vertex $x$ and adding one arrow $x\to i$ for each indecomposable summand $P(i)$ of $M$. Moreover, none of the new arrows appears in a relation in $\zG$.
\end{remark}

The following result was obtained in \cite{L,BHL, Lu} for arbitrary schurian algebras. We reformulate it here in our setting. If $B=\textup{Jac}(Q,W)$ be a dimer tree algebra and $k$ is a vertex of $Q$, we define the mutation at $k$ as the Jacobian algebra of the quiver with potential obtained by mutation at $k$, see section~\ref{sect QP}. In other words
$\mu_k B=\textup{Jac} (\mu_k(Q,W))$. 
\begin{thm} \label{thm BHL}
\cite[Propositions 2.9 \& 2.15]{BHL}\cite[Proposition 2.17]{Lu}
With the above notation, the algebras $B$ and $\mu_k B$ are derived equivalent if 
\begin{enumerate}
\item [(i)] before the mutation, there is at most one arrow ending in $k$ and, if $v$ is a nonzero path in $B$ ending at $k$, then there exists an arrow $\za$ such that $v\za\ne 0$ in $B$.
\item [(ii)] after the mutation, there is at most one arrow starting at $k$ and, if $v$ is a nonzero path starting at $k$ in $\mu_k B$, then there exists an arrow $\za$ such that $\za v\ne 0$ in $\mu_k B$.
\end{enumerate}

\end{thm}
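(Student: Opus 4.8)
The statement is a reformulation, in the language of dimer tree algebras, of results already available for schurian algebras; since every dimer tree algebra is schurian by Proposition~\ref{prop schurian}, the plan is simply to verify that hypotheses (i) and (ii) are precisely the hypotheses required by \cite[Propositions 2.9 \& 2.15]{BHL} and \cite[Proposition 2.17]{Lu}, and then to quote those results. Concretely, I would read condition (i) as the condition under which the quiver mutation $\mu_k Q$ is a well-defined ``left mutation'' compatible with the relations of $B$: having at most one arrow into $k$, together with the requirement that every nonzero path ending at $k$ can be extended through $k$, means that no relation is created or destroyed by the replacement $\za\mapsto\zabar$ of arrows incident to $k$. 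Condition (ii) is the opposite statement read off from $\mu_k B$; the two conditions together guarantee that the reduced quiver with potential $\mu_k(Q,W)$ has no residual $2$-cycles and that its Jacobian algebra is exactly the algebra $\mu_k B$ described in Section~\ref{sect QP}.

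For completeness I would also recall the mechanism behind the cited results. Under (i) one forms the two-term complex $T_k$ given by a minimal left $\add\!\big(\bigoplus_{j\ne k}P(j)\big)$-approximation $P(k)\to P'$ of $P(k)$, placed in degrees $-1$ and $0$, and sets $T=\big(\bigoplus_{j\ne k}P(j)\big)\oplus T_k$, regarded as an object of the homotopy category $K^b(\textup{proj}\,B)$. The assumption that at most one arrow ends at $k$ makes $T_k$ have the expected shape, and the path-extension part of (i) makes the approximation map injective, so that $T_k$ is quasi-isomorphic to the corresponding cokernel shifted into degree $0$. One then checks that $T$ is self-orthogonal, i.e. $\Hom_{K^b(\textup{proj}\,B)}(T,T[n])=0$ for $n\ne 0$, and that $T$ generates $K^b(\textup{proj}\,B)$ as a triangulated category; hence $T$ is a tilting complex, and by Rickard's Morita theory for derived categories $B$ is derived equivalent to $\End_{K^b(\textup{proj}\,B)}(T)$.

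The crux is then the identification $\End_{K^b(\textup{proj}\,B)}(T)\cong\mu_k B$. Here one computes the quiver of the endomorphism algebra directly: its vertices are the $j\ne k$ together with the new vertex of $T_k$; arrows not incident to $k$ survive unchanged, arrows into and out of $k$ get reversed to $\zabar,\zbbar$, and every length-two path $\xymatrix@C=12pt{i\ar[r]^{\za}&k\ar[r]^{\zb}&j}$ contributes a new arrow $[\za\zb]\colon i\to j$, exactly as in the definition of $\mu_k Q$. Matching the relations is where the second halves of (i) and (ii) do the real work: they force any composite that would otherwise persist as a relation to factor through $k$ instead, so that the induced relations on $\End(T)$ are precisely the cyclic derivatives $\partial_\eta\big([W]+\zD_k\big)$, and condition (ii) ensures that passing to the reduced potential does not alter the algebra. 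I expect this endomorphism-algebra computation — the bookkeeping of relations through the mutation, and ruling out spurious $2$-cycles in the reduced potential — to be the only genuine obstacle; the remainder is formal tilting theory and, for the purposes of this paper, is in any case subsumed by the cited references, so the argument here reduces to checking that our hypotheses translate into theirs.
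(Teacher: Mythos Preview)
Your proposal is correct in spirit and in fact goes further than the paper does: the paper provides no proof of this theorem at all, since it is a result quoted from \cite{BHL} and \cite{Lu} for schurian algebras and merely restated in the present setting. Your identification of the key reduction (dimer tree algebras are schurian by Proposition~\ref{prop schurian}, so the cited results apply verbatim) is exactly the justification needed; the additional sketch of the tilting-complex mechanism behind the cited propositions is accurate and helpful background, but is not required for the paper's purposes.
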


%
%
\subsection{Extending nonzero paths}
Let $B$ be a dimer tree algebra. 
In this subsection, we use the weight of a boundary arrow to characterize when a nonzero path in $B$ can be extended by a boundary arrow.

A \emph{prefix} of a path $v$ in $Q$ is a subpath $u$ such that $v=uv'$ in $Q$. 
A \emph{suffix} of a path $v$ in $Q$ is a subpath $u$ such that $v=v'u$ in $Q$. 

\begin{lemma}
 \label{lem 1}
 Let $\zg\colon i\to j$ be a boundary arrow of $Q$. Then the following are equivalent.
 \begin{enumerate}
\item For all nonzero paths $v$ ending at $i$ the composition $v\zg$ is nonzero.
\item The weight $\wt(\zg)$ of $\zg$ is 1.
\end{enumerate}
\end{lemma}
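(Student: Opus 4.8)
The plan is to prove the equivalence in the form $\neg(1)\iff \ell(\zg)$ is even, since by definition $\wt(\zg)$ is $1$ or $2$ according to the parity of $\ell(\zg)$. Write $\mathfrak c(\zg)=\zg_1\zg_2\cdots\zg_\ell$ with $\zg_1=\zg$ and $\ell=\ell(\zg)\ge 2$, and for $1\le k\le\ell-1$ let $C_k$ be the chordless cycle containing the length-two subpath $\zg_k\zg_{k+1}$, written cyclically as $C_k=\zg_k\zg_{k+1}\zd_k$; thus $\zd_k$ is the proper subpath of $C_k$ from the head of $\zg_{k+1}$ to the tail of $\zg_k$, and, since $\zg_{k-1}\zg_k$ is a length-two subpath of $\mathfrak c(\zg)$ for every $k\ge 2$, the tail of $\zg_k$ equals the head of $\zg_{k-1}$. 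Because $W=\sum_C(-1)^{d(C)}C$, and $\zg_\ell$ is a boundary arrow lying only in $C_{\ell-1}$ while $\zg_{k+1}$ is an interior arrow lying in exactly $C_k$ and $C_{k+1}$ for $1\le k\le\ell-2$, the cyclic derivatives $\partial_{\zg_\ell}W$ and $\partial_{\zg_{k+1}}W$ give in $B$ the relations
\[\zd_{\ell-1}\zg_{\ell-1}=0\qquad\text{and}\qquad \zd_k\zg_k=\pm\,\zg_{k+2}\,\zd_{k+1}\quad(1\le k\le\ell-2).\]
We shall also use that $B$ is schurian (Proposition~\ref{prop schurian}), so that subpaths of nonzero paths are nonzero and proper subpaths of chordless cycles are nonzero, and that, by the structure of dimer tree algebras (cf.\ \cite{SS}), a path of $Q$ is zero in $B$ exactly when, after applying commutation relations, it contains as a subpath the complement of a boundary arrow in its chordless cycle.

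$(\Leftarrow)$ Suppose $\ell$ is even, and put $v=\zd_{\ell-1}\zd_{\ell-3}\cdots\zd_3\zd_1$, a concatenation of $\tfrac{\ell}{2}$ proper subpaths of chordless cycles (reading $v=\zd_1$ when $\ell=2$). Since the tail of $\zg_k$ equals the head of $\zg_{k-1}$, this is a genuine path of $Q$, and it ends at the tail of $\zg$, i.e.\ at $i$. It is nonzero: its factors $\zd_k$ lie in the pairwise distinct cycles $C_k$; two consecutive factors $\zd_k$ and $\zd_{k-2}$ lie in cycles that share no arrow (otherwise $C_{k-2},C_{k-1},C_k$ would form a triangle in the dual graph, contradicting that it is a tree --- here we use Proposition~\ref{prop Q}); and the complement of a boundary arrow occupies an entire chordless cycle except one arrow; hence no commutation relation can be applied to $v$, and $v$ contains no such complement. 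Finally $v\zg=0$: using the relations $\zd_1\zg_1=\pm\zg_3\zd_2$, then $\zd_3\zg_3=\pm\zg_5\zd_4$, and so on --- legitimate because $\zg_2,\zg_4,\dots,\zg_{\ell-2}$ are interior, with no such step occurring when $\ell=2$ --- the path $v\zg=\zd_{\ell-1}\cdots\zd_3\zd_1\zg_1$ rewrites in $B$, up to sign, as $\zd_{\ell-1}\zg_{\ell-1}\,\zd_{\ell-2}\zd_{\ell-4}\cdots\zd_2$, which contains the zero subpath $\zd_{\ell-1}\zg_{\ell-1}$. Hence $v\zg=0$, so $(1)$ fails.

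$(\Rightarrow)$ Now suppose a nonzero path $v$ ending at $i$ satisfies $v\zg=0$; one must deduce that $\ell$ is even. The idea is to trace how the identity $v\zg=0$ can arise. Since $\zg$ lies only in $C_1$, the last arrow $\zg$ of $v\zg$ can be moved only by the commutation $\zd_1\zg=\pm\zg_3\zd_2$ at $\zg_2$, which forces $v$ to have a $\zd_1$-suffix; one then analyzes the resulting vanishing condition one cycle further along $\mathfrak c(\zg)$, and continues in this manner. Proceeding along $\mathfrak c(\zg)$, a genuine \emph{zero} relation --- one of the $\partial_{\zg_{2j}}W$ with $\zg_{2j}$ a boundary arrow --- can be reached only when the terminal boundary arrow $\zg_\ell$ occurs at an even index along $\mathfrak c(\zg)$, that is only when $\ell$ is even; if $\ell$ is odd then every relation occurring in this chain is a commutation, no reduction of $v\zg$ to zero can be completed, and we contradict $v\zg=0$. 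I expect this converse implication to be the main obstacle: one must show that the cycle-path analysis is exhaustive --- that no other sequence of commutation and zero relations can collapse the product $v\zg$ --- and this requires the detailed combinatorial control of nonzero paths in dimer tree algebras from \cite{SS}, together with the tree structure of the dual graph. The implication $(\Leftarrow)$, by contrast, is settled by the single explicit witness constructed above.
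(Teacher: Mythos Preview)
Your overall strategy matches the paper's proof exactly: the same explicit witness $v=\zd_{\ell-1}\zd_{\ell-3}\cdots\zd_1$ for the direction ``$\ell$ even $\Rightarrow$ $(1)$ fails'', and the same idea of pushing the relation along the cycle path for the converse. The construction in your $(\Leftarrow)$ direction is correct and essentially identical to the paper's argument (the paper also asserts without further detail that this $v$ is nonzero).

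The gap is in your $(\Rightarrow)$ direction, which you yourself flag as incomplete. What you have written there is a heuristic, not a proof, and the paper does \emph{not} rely on any external ``combinatorial control of nonzero paths from \cite{SS}'' to finish it --- the argument is self-contained and you should be able to carry it out. Concretely: suppose $v\zg=0$ with $v$ nonzero. Since $\zg=\zg_1$ is boundary, the only relation ending in $\zg_1$ is $\partial_{\zg_2}W$, which equals $\zd_1\zg_1$ if $\ell=2$ and $\pm(\zd_1\zg_1-\zg_3\zd_2)$ if $\ell>2$. In the latter case, after possibly replacing $v$ by an equivalent path, $\zd_1$ is a suffix of $v$, say $v=v_1\zd_1$, and then $v\zg=v_1\zd_1\zg_1=\pm v_1\zg_3\zd_2$. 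Now observe that $v_1$ cannot end in $\zg_2$, because $\zg_2\zd_1=\partial_{\zg_1}W=0$ would make $v$ zero. Hence the vanishing of $v_1\zg_3\zd_2$ forces a relation involving $\zg_3$; if $\ell=3$ then $\zg_3$ is boundary and the only such relation is $\partial_{\zg_2}W$, which requires $v_1$ to end in $\zg_2$ --- a contradiction. If $\ell>3$ one repeats: $\zd_3$ is a suffix of $v_1$, set $v_1=v_2\zd_3$, and $v_2$ cannot end in $\zg_4$ because $\zg_4\zd_3\zd_1=\zd_2\zg_2\zd_1=0$. Iterating, one sees that whenever $\ell$ is odd the process terminates in a contradiction, while for $\ell$ even it terminates with the genuine zero relation $\zd_{\ell-1}\zg_{\ell-1}=0$. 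This is precisely the induction you gestured at; the point is that at each step you must record why the shorter path $v_k$ cannot end in the ``wrong'' arrow $\zg_{2k}$, and this follows from $v$ being nonzero together with the relations already used. No further input is needed.
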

\begin{proof}
Let $\mathfrak{c}(\zg)=\zg\zd_0\zd_1\dots\zd_t$ be the cycle path of $\zg$. Thus $\wt(\zg)=1$ if $t$ is odd, and $\wt(\zg)=2$ if $t$ is even.
  Let $C_0,C_1,\dots ,C_t$ be the chordless cycles along $\mathfrak{c}(\zg)$ in order and define the path $u_i$ by $C_0=\zg\zd_0u_0$ and $C_i=\zd_{i-1}\zd_iu_i$ for $i=1,2,\ldots,t$, see Figure~\ref{fig lem 1}.   Note that $\zd_t$ is a boundary arrow and all $\zd_i$ with $i<t$ are interior arrows.
    \begin{figure}
\begin{center}
\[\xymatrix{
i\ar[d]_\zg \\
j \ar[r]^{\zd_0} &\cdot\ar[d]_{\zd_1} \ar@/_16pt/@{.>}[ul]_{u_0} \\
&\cdot \ar[r]^{\zd_2} \ar@/^16pt/@{.>}[ul]^{u_1}  &\cdot \ar@/_16pt/@{.>}[ul]_{u_2}\ar[d]_{\zd_3} \\
&&\cdot  \ar@/^16pt/@{.>}[ul]^{u_3} &\dots
}
\]
\caption{Proof of Lemma \ref{lem 1}. The cycle path $\mathfrak{c}(\zg)=\zg\zd_0\zd_1\dots\zd_t$ and the chordless cycles $C_i=\zd_{i-1}\zd_iu_i$.}
\label{fig lem 1}
\end{center}
\end{figure}

  (1) $\Rightarrow$ (2).  Suppose $\wt(\zg)=2$, so $t$ is even. Then the path $v=u_tu_{t-2}\dots u_2u_0$ is nonzero ending at $i$ and, using the relations $\partial_{\zd_{i}}W=\pm(u_i\zd_{i-1} - \zd_{i+1}u_{i+1})$, we get  
  \[
\begin{array}{ccccccccccccc}
v\zg =   u_tu_{t-2}\dots u_2u_0\zg 
   =     u_tu_{t-2}\dots u_2\zd_1 u_1
     =   u_tu_{t-2}\dots \zd_3u_3 u_1
    =   u_t\zd_{t-1} u_{t-1}\dots u_3 u_1\\
\end{array}
\]
which is zero, because $u_t\zd_{t-1}=\partial _{\zd_t}W=0$.

 (2) $\Rightarrow$ (1). 
 Suppose  $v$ is a nonzero path ending at $i$ such that $v\zg=0$. We want to show that $t$ is even. Since $v\zg=0$, there must be a relation that involves both a path equivalent to $v$ and the arrow $\zg$, and that relation must end with the arrow $\zg$. Since $\zg$ is a boundary arrow, the only relation that ends in $\zg$ is
\begin{equation}
\label{eq relation}
{\partial_{\zd_0} W=\left\{
\begin{array}
 {ll}
 u_0\zg &\textup{if $t=0$;}\\
\pm( u_0\zg -\zd_1u_1)&\textup{if $t>0$.}\qquad
\end{array}\right.
}
\end{equation}

If $t=0$ there is nothing to show.  Assume $t>0$. Then without loss of generality we may assume that the path $u_0$ is a suffix of $v$. Thus there is a path $v_1$ such that 
\[v=v_1 u_0.\]
Using the relation (\ref{eq relation}) we get
\begin{equation}
\label{eq lem1 1}
0= v\zg=v_1u_0\zg=v_1\zd_1u_1.
\end{equation}
Now, $v_1$ is a nonzero path, since it is a subpath of $v$. Furthermore, $v_1$ does not end with the arrow $\zd_0$, since $\zd_0u_0=0$. 
Thus equation~(\ref{eq lem1 1}) implies that there must be a relation involving a path equivalent to $v_1$ and the arrow $\zd_1$.

If $t=1$ then $\zd_1$ is a boundary arrow and  hence the only relations involving $\zd_1$ come from the chordless cycle $C_1=\zd_0\zd_1u_1$. Since $v$ cannot end in $\zd_0$, we see that $v\zg\ne 0$, a contradiction.  Now suppose $t>1$. This means that $\zd_1$ is an interior arrow and thus there exists another 
chordless cycle $C_2=\zd_1\zd_2u_2$ containing $\zd_1$.
Thus the relation in question must be
 \begin{equation}
\label{eq relation 2}
\partial_{\zd_2} W=\left\{
\begin{array}
 {ll}
 u_2\zd_1 &\textup{if $t=2$;}\\
 \pm(u_2\zd_1 -\zd_3u_3)&\textup{if $t>2$.}
\end{array}\right.
\end{equation}

If $t=2$ there is nothing to show, so we may assume $t>2$. Then, without loss of generality, the path $u_2$ is a suffix of $v_1$, thus $v_1=v_2u_2$ and
\begin{equation}
\label{eq lem 1 2}
v\zg=v_1\zd_1 u_1 =v_2 u_2\zd_1 u_1 = v_2\zd_3 u_3 u_1,
\end{equation}
where the last equality uses relation (\ref{eq relation 2}).
Again, $v_2$ is a nonzero path and it does not end with the arrow $\zd_2$, because otherwise, the path $v$  would contain the zero subpath $\zd_2 u_2 u_0 = u_1\zd_0u_0$.
If $t=3$ then $\zd_3 $ is a boundary arrow and  hence the only relations involving $\zd_3$ come from the chordless cycle $C_3=\zd_2\zd_3u_3$. Since $v_2$ cannot end in $\zd_2$, we see that $v\zg\ne 0$, a contradiction.   So we may assume $t>3.$

Continuing this way, we see that whenever $t=2s+1$ is odd then  $v\zg\ne 0$ and we obtain a contradiction. This process must stop, because the cycle path is a well-defined finite path. 
\end{proof}

We also have the dual statement of Lemma \ref{lem 1}.
\begin{lemma}
 \label{lem 1 dual}
 Let $\zg\colon i\to j$ be a boundary arrow of $Q$. Then the following are equivalent.
 \begin{enumerate}
\item For all nonzero paths $v$ starting at $j$ the composition $\zg v$ is nonzero.
\item The coweight $\cwt(\zg)$ of $\zg$ is 1.
\end{enumerate}
\end{lemma}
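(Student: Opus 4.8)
The plan is to deduce Lemma~\ref{lem 1 dual} from Lemma~\ref{lem 1} by a duality argument, rather than repeating the inductive analysis along the cycle path. Recall that for a boundary arrow $\zg\colon i\to j$, its cocycle path is by definition the cycle path $\mathfrak{c}(\za)=\za_1\cdots\za_{\ell(\za)}$ of the unique boundary arrow $\za$ with $\za_{\ell(\za)}=\zg$, and the coweight $\cwt(\zg)$ equals $\wt(\za)$ by Definition~\ref{def cycle path 2}(c). So the statement we must prove is: for all nonzero paths $v$ starting at $j$, the composition $\zg v$ is nonzero if and only if $\wt(\za)=1$, where $\za$ is the first arrow of the (unique) cocycle path ending in $\zg$.

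First I would pass to the opposite algebra. If $B=\mathrm{Jac}(Q,W)$ is a dimer tree algebra, then $B^{\mathrm{op}}=\mathrm{Jac}(Q^{\mathrm{op}},W^{\mathrm{op}})$, where $Q^{\mathrm{op}}$ is the opposite quiver and $W^{\mathrm{op}}$ the potential obtained by reversing all cyclic paths; one checks that $Q^{\mathrm{op}}$ again satisfies Definition~\ref{def Q} (reversing all arrows preserves chordless cycles, the dual graph, and planarity), and $W^{\mathrm{op}}$ is, up to the overall sign convention on $C_0$, again the alternating sum of chordless cycles, so $B^{\mathrm{op}}$ is a dimer tree algebra. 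Under reversal, a boundary arrow $\zg\colon i\to j$ of $Q$ becomes a boundary arrow $\zg^{\mathrm{op}}\colon j\to i$ of $Q^{\mathrm{op}}$, a nonzero path $v\colon j\to k$ in $B$ starting at $j$ becomes a nonzero path $v^{\mathrm{op}}\colon k\to j$ in $B^{\mathrm{op}}$ ending at $j$, and $\zg v\ne 0$ in $B$ if and only if $v^{\mathrm{op}}\zg^{\mathrm{op}}\ne 0$ in $B^{\mathrm{op}}$. Thus condition (1) of Lemma~\ref{lem 1 dual} for $\zg$ in $B$ is exactly condition (1) of Lemma~\ref{lem 1} applied to the boundary arrow $\zg^{\mathrm{op}}$ in $B^{\mathrm{op}}$.

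Next I would identify the combinatorics. Applying Lemma~\ref{lem 1} to $\zg^{\mathrm{op}}$ in $B^{\mathrm{op}}$, condition (1) is equivalent to $\wt_{B^{\mathrm{op}}}(\zg^{\mathrm{op}})=1$, i.e.\ to the cycle path $\mathfrak{c}(\zg^{\mathrm{op}})$ in $Q^{\mathrm{op}}$ having odd length. The key observation is that reversing all arrows turns the cycle path of $\za$ in $Q$ (which ends in $\zg$) into the cycle path of $\zg^{\mathrm{op}}$ in $Q^{\mathrm{op}}$ (read in the opposite order, starting at $\zg^{\mathrm{op}}$): indeed condition (i) of Definition~\ref{def cycle path 2}(a) is symmetric in the two endpoints and the chordless-cycle condition (ii) is preserved under reversal. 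Hence $\mathfrak{c}(\zg^{\mathrm{op}})$ in $Q^{\mathrm{op}}$ has the same length as $\mathfrak{c}(\za)$ in $Q$, so $\wt_{B^{\mathrm{op}}}(\zg^{\mathrm{op}})=\wt_B(\za)=\cwt_B(\zg)$. Chaining the equivalences gives: condition (1) of Lemma~\ref{lem 1 dual} $\iff$ condition (1) of Lemma~\ref{lem 1} for $\zg^{\mathrm{op}}$ $\iff$ $\wt_{B^{\mathrm{op}}}(\zg^{\mathrm{op}})=1$ $\iff$ $\cwt_B(\zg)=1$, which is the desired statement.

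The main obstacle is the bookkeeping in the reduction to the opposite algebra: one must verify carefully that $B^{\mathrm{op}}$ really is a dimer tree algebra in the sense of Definition~\ref{def B} (the only subtlety is the sign convention, which depends on a choice of a distinguished cycle $C_0$ with one interior arrow — but such a cycle still exists in $Q^{\mathrm{op}}$, and the overall sign of $W$ does not affect the ideal of partial derivatives, so $B^{\mathrm{op}}\cong\mathrm{Jac}(Q^{\mathrm{op}},W^{\mathrm{op}})$ up to rescaling arrows), and that the notions of boundary arrow, weight, and cycle path transform as claimed under reversal. If one prefers to avoid invoking that $B^{\mathrm{op}}$ is literally a dimer tree algebra, the alternative is to simply transcribe the proof of Lemma~\ref{lem 1}: run the identical inductive argument along the cocycle path $\mathfrak{c}(\za)=\za_1\cdots\za_{\ell}$ with $\za_\ell=\zg$, now peeling off the $u_i$ as \emph{prefixes} of $v$ instead of suffixes and using the relations $\partial_{\zd_i}W$ in the same way, with $t=\ell-1$; the parity count is unchanged and $\cwt(\zg)=1$ precisely when $t$ is odd. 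Either route works; I would present the duality argument for brevity.
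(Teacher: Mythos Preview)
Your proposal is correct and matches the paper's approach: the paper simply states that Lemma~\ref{lem 1 dual} is the dual of Lemma~\ref{lem 1} and gives no separate proof, so your duality argument via the opposite algebra (or the equivalent transcription along the cocycle path) is exactly what is intended.
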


\subsection{Derived equivalences given by mutation}
Let $B=\textup{Jac}(Q,W)$ be a dimer tree algebra. In this subsection, we describe two specific situations where the mutation of the quiver with potential at a vertex $k$ yields a derived equivalent algebra.
\begin{lemma}
 \label{lem der} 
If $Q$ contains one of the following two subquivers, then the mutation at the vertex $k$ is a derived equivalence that preserves the total weight of the quivers. Moreover, the mutated quiver with potential again defines a dimer tree algebra.

\textup{(a)} \[ \xymatrix{&k\ar[rd]^\zb\\
\cdot\ar[ru]^\za&&\cdot\ar@/_0pt/@{.>} [ll]_u}\] where $\za,\zb$ are boundary arrows with $\cwt(\za)=1$ and $\wt(\zb)=2$ and $u$ is a path completing the chordless cycle.

\textup{(b)} 
\[ \xymatrix{&\cdot\ar[d]_\zg\\
\cdot\ar@/^16pt/@{.>}[ru]^v&
k\ar[l]^\za\ar[r]_\zb&
\cdot\ar[lu]_\zs\ar@/_16pt/@{<.} [lu]_u
}\]
 where $\za,\zb$ are boundary arrows, $u$ is a path that consists entirely of boundary arrows, and $v$ is a path completing the chordless cycle.  Moreover, $v$ is not a boundary arrow. 
\end{lemma}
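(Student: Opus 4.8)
The plan is to verify the two hypotheses (i) and (ii) of Theorem~\ref{thm BHL} for the vertex $k$ in each of the two configurations, then separately check that the total weight is preserved and that the mutated quiver with potential again satisfies Definition~\ref{def Q} (so that $\mu_k B$ is again a dimer tree algebra). Throughout I will use Proposition~\ref{prop schurian} (nonzero parallel paths are equal, cyclic paths are zero) and the characterizations of extendability of nonzero paths from Lemma~\ref{lem 1} and Lemma~\ref{lem 1 dual}, which is exactly why the weight/coweight conditions were imposed.

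For case (a): here the only arrow ending at $k$ is $\za$, so the ``at most one arrow ending in $k$'' part of (i) is immediate; for the second part of (i), a nonzero path $v$ ending at $k$ must have the form $v = v'\za$ (since $\za$ is the unique arrow into $k$), and then $v'$ is a nonzero path ending at $s(\za)$. Because $\cwt(\za)=1$, Lemma~\ref{lem 1 dual} applied to the boundary arrow $\za$ (reading it as the last arrow of a cocycle path) gives that $\za$ can be post-composed—more precisely I need the statement in the form: a nonzero path ending at $s(\za)$ extends past $\za$, which follows from the coweight-$1$ condition together with the fact that $u$ completes the chordless cycle through $\za,\zb$. After mutation at $k$, the arrows at $k$ are reversed: $\za$ becomes $\bar\za\colon s(\za)\leftarrow k$ and $\zb$ becomes $\bar\zb\colon k\leftarrow t(\zb)$, and a new arrow $[\za\zb]\colon s(\za)\to t(\zb)$ is introduced, which (being the composite across the $2$-path $\za\zb$ that was a subpath of the chordless cycle $\za\zb u$) simply closes up with $u$; the new potential term is $[\za\zb]\,\bar\zb\,\bar\za$, and after reduction this absorbs the old cycle. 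One checks that in $\mu_k B$ the only arrow starting at $k$ is $\bar\za$, giving the first half of (ii); for the second half, a nonzero path starting at $k$ must begin with $\bar\za$, and one must show it can be pre-composed by some arrow—this is where $\wt(\zb)=2$ enters, via Lemma~\ref{lem 1} applied to $\zb$ (or rather to the arrow created by reversal), since weight $2$ is precisely the condition that forces the relation structure making the path extendable on the left.

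Case (b) is analogous but with two arrows $\za,\zb$ \emph{leaving} $k$ and one arrow (the one hidden in the path $v$, or rather: the hypothesis is arranged so that before mutation there is at most one arrow into $k$) — so I will unwind the local picture carefully, identifying which arrows are the source/target arrows at $k$, apply Lemma~\ref{lem 1}/\ref{lem 1 dual} to $\za$ and $\zb$ using that $u$ consists entirely of boundary arrows and $v$ is a non-boundary path completing the chordless cycle, and verify (i) and (ii) of Theorem~\ref{thm BHL} exactly as above. The weight bookkeeping: mutation at $k$ changes the chordless cycles incident to $k$ and their cycle paths; one tracks each boundary arrow whose cycle path passes through a modified cycle and checks the parity of the cycle-path length is unchanged, hence each $\wt(\cdot)$ is unchanged, hence the total weight is unchanged. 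Finally one checks (Q1) and (Q2): no arrow of $\mu_k Q$ is orphaned from all chordless cycles, and the dual graph is still a tree—this is essentially because a single quiver mutation of the described local type performs a ``flip'' of one chordless cycle against its neighbor, which on the level of the dual graph is a local move that preserves the tree property.

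The main obstacle I expect is not any single step but the careful case analysis of the \emph{reduction} of the mutated potential: mutation generically introduces $2$-cycles and a non-reduced potential, and to identify $\mu_k(Q,W)$ with an honest dimer tree algebra one must exhibit an explicit right-equivalence of the completed path algebra that cancels the $2$-cycles and recovers a potential of the form $\sum_C (-1)^{d(C)}C$ for the new quiver. Verifying that the signs work out (so that the mutated potential is again an alternating sum over chordless cycles with respect to a valid choice of base cycle $C_0$ and distance function) is the delicate bookkeeping; the hypotheses that $u$ is a boundary path and $v$ is not a boundary arrow are there precisely to guarantee that only the expected $2$-cycle appears and that it can be cancelled without creating new relations, so the burden is to confirm this rather than to discover it.
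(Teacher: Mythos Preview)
Your overall strategy matches the paper's: verify the hypotheses of Theorem~\ref{thm BHL}, then separately check total weight and the dimer-tree conditions. However, several of your concrete claims are wrong and would not go through.

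First, your reading of Lemma~\ref{lem 1} is inverted. You write that ``weight $2$ is precisely the condition that forces the relation structure making the path extendable on the left,'' but Lemma~\ref{lem 1} says the opposite: weight~$1$ is equivalent to every nonzero path being extendable past the arrow. The hypothesis $\wt(\zb)=2$ in case~(a) is not used to guarantee extendability directly; rather, the paper computes the cycle path of $\zabar$ in $\mu_k Q$ explicitly and shows that $\wt(\zb)=2$ in $Q$ forces $\wt(\zabar)=1$ in $\mu_k Q$. It is this weight computation in the \emph{mutated} quiver that makes Lemma~\ref{lem 1} applicable there. You never identify this step, and without it your verification of~(ii) collapses.

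Second, your total-weight argument is incorrect. You claim one checks ``the parity of the cycle-path length is unchanged, hence each $\wt(\cdot)$ is unchanged.'' In fact the individual weights \emph{do} change under mutation: in case~(a), $\wt(\za)=2,\ \cwt(\za)=1,\ \wt(\zb)=2$ become $\wt(\zabar)=1,\ \cwt(\zabar)=2,\ \cwt(\zbbar)=2$. The paper tabulates these and observes the \emph{sum} is preserved, not each term.

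Third, for case~(b) the verification after mutation cannot be done by Lemmas~\ref{lem 1} and~\ref{lem 1 dual} alone. In $\mu_k Q$ there are two arrows $\zabar,\zbbar$ ending at $k$, so one must show that for every nonzero path $w$ starting at $k$, at least one of $\zabar w,\ \zbbar w$ is nonzero. The paper does this by a direct argument using the hypothesis that $u$ is a boundary path: if $\zbbar w=0$ then the only relation forcing this is $\partial_\zd W'$ (where $\zd$ is the last arrow of $u$), which pins down $w=\zgbar u'$, and then one checks $\zabar\zgbar u'\ne 0$. Your plan to ``apply Lemma~\ref{lem 1}/\ref{lem 1 dual} to $\za$ and $\zb$'' does not capture this.
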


\begin{proof}
 (a)   Locally, the mutated quiver  $\mu_k Q$ is one of the following  
  \[ \xymatrix{&k\ar@{<-}[rd]^{\overline{\zb} }\\
\cdot\ar@/^0pt/[rr]^{[\za\zb]}\ar@{<-}[ru]^{\overline{\za}}&&\cdot\ar@/^10pt/@{.>} [ll]^u} \qquad \qquad
\xymatrix{&k\ar@{<-}[rd]^{\overline{\zb}}\\
\cdot\ar@{<-}[ru]^{\overline{\za}}&&\cdot}  \] 
where the left picture corresponds to the case where the path $u$ has length at least 2 and the right picture  to the case where $u$ is an arrow. 
In both cases, the quiver $\mu_k Q$ satisfies the conditions of Definition~\ref{def Q}. 

Now we show that the mutation of the potential also satisfies our definition. We need to treat the case where the path $u$ is a single arrow separately. Suppose first that $u$ is not an arrow. Then we write the potential  as
$W=\pm \za\zb u+ W'$, where none of the terms of $W'$ goes through the vertex $k$, hence $W'$ does not change under the mutation. 
Therefore the mutated potential is \[\mu_k W= 
\pm \,[\za\zb] u \,\pm \,[\za\zb]\zbbar\zabar +W'.\]
 To match our sign conventions in Definition~\ref{def B}, it suffices to change bases by replacing $\zabar$ by $-\zabar$, so we get the desired potential
\[\pm\, [\za\zb] u \,\mp\,[\za\zb]\zbbar\zabar +W'.\]
On the other hand, if $u$ is an arrow then we can write the potential
as
$W=\pm(\za\zb u-uv)+W''$, where
$v$ is the unique path forming the second chordless cycle with $u$ and $W''$ does not contain $\za,\zb$ or $u$.
Then the mutated potential is
\[\mu_k W=\pm([\za\zb]u +[\za\zb]\zbbar\zabar - uv)+W''.\]
Note that the first term on the right hand side is a 2-cycle. 
Using the relations $[\za\zb]=v$ and $u=-\zbbar\zabar$, which are obtained from the cyclic derivatives in the arrows $u$ and  $[\za\zb]$, we see that this potential is equivalent to the desired potential $\mp\zbbar\zabar v+W''$.

Next we want to show that the mutation is a derived equivalence. Note that the weight of $\overline{\za}$   in $\mu_k Q$ is opposite to the weight of $\zb$ in $Q$, independent of the length of $u$.  Indeed, the cycle path of $\zabar$ is
\[ \mathfrak{c}(\zabar) = \left\{
\begin{array}{ll}
  \zabar\,[\za\zb]\,\zd\dots &\textup{if the length of $u$ is at least 2 and $\zd$ is the first arrow in $u$;}      \\
  \zabar\ze \dots &\textup{if the length of $u$ is one, and $\ze$ is the first arrow in $v$,}
\end{array}
\right.
\]
and the cycle path of $\zb$ in $Q$ is 
\[ \mathfrak{c}(\zb) = \left\{
\begin{array}{ll}
  \zb\,\zd\dots &\textup{if the length of $u$ is at least 2 and $\zd$ is the first arrow in $u$;}      \\
  \zb \, u\,\ze \dots &\textup{if the length of $u$ is one, and $\ze$ is the first arrow in $v$.}
\end{array}
\right.
\]
Hence, since $\wt(\zb)=2$, we have
$\wt(\overline{\za})=1.$
By Theorem~\ref{thm BHL}, we conclude that the mutation at $k$ is a derived equivalence because
  \begin{itemize}
\item [(i)] before mutating, there is precisely one arrow ending in $k$ and, if $v$ is a nonzero path in $Q$ starting at $k$, then  Lemma~\ref{lem 1 dual} implies that $\za v\ne 0$, since $\cwt(\za)=1$.
\item[(ii)] after mutating, there is precisely one arrow starting in $k$ and, if $v$ is a nonzero path in $\mu_k Q$ ending at $k$,  then Lemma~\ref{lem 1} implies that the composition  $v\overline{\za}\ne 0$ because $\wt(\overline{\za})=1$. \ 
\end{itemize}
It remains to check that the total weight remains unchanged. 
By our assumptions, we have
\[\wt(\za) = 2,\ \cwt(\za)=1,\  \wt(\zb)=2, \] 
while on the other hand
\[\wt(\overline{\za}) = 1,\ \cwt(\overline{\za})=2,\  \cwt(\overline{\zb})=\cwt(\za)+1=2,\]
and the result follows.
Note that the cocycle path of $\zb$ is equal to the cycle path of $\za$, so the contribution of $\cwt(\zb)$ to the total weight of $Q$ is already counted in $\wt(\za)$. Similarly, the cycle path $\zbbar$ is equal to the cocycle path of $\zabar$, so the contribution of $\wt(\zbbar)$ to the total weight of $\mu_k Q$ is already counted in $\cwt(\zabar)$.

(b) 
 After mutating at $k$, the quiver is one of the following
 
 \[\xymatrix{
 &\cdot\ar@{<-}[d]^{\zgbar}\\
\cdot\ar@{<-}[ru]^(0.7)*!/u-16pt/{\rotatebox{45}{\scriptsize$[\zg\za]$}}\ar@/^16pt/@{.>}[ru]^v&
k\ar@{<-}[l]^{\zabar}\ar@{<-}[r]_{\zbbar}&
\cdot\ar@/_16pt/@{<.} [lu]_u
 }
 \qquad\qquad
 \xymatrix{
 &\cdot\ar@{<-}[d]^{\zgbar}\\
\cdot&
k\ar@{<-}[l]^{\zabar}\ar@{<-}[r]_{\zbbar}&
\cdot\ar@/_16pt/@{<.} [lu]_u
 }
 \] 
 where the left picture corresponds to the case where the path $v$ has length at least 2 and the right picture  to the case where $v$ is an arrow.
 Denote the potential of the original quiver by
 \[ W= \pm( \zg\za v - \zg\zb\zs + \zs u) +W'.  
 \]
 Its mutation is 
 \[\begin{array}{rcl}
  \mu_k W&=& \pm( [\zg\za] v+ [\zg\za] \zabar\zgbar -[\zg\zb]\zs - [\zg\zb]\zbbar\zgbar + \zs u) +W'\\
  &\cong& \pm( [\zg\za] v+ [\zg\za] \zabar\zgbar -\zbbar\zgbar  u) +W',\\
  \end{array}
 \]
 where the last equivalence is the reduction of the potential obtained by replacing $\zs$ by $-\zbbar\zgbar$, which removes the 2-cycle $[\zg\zb]\zs$. To match our sign conventions in Definition~\ref{def B}, it suffices to change bases by replacing $\zabar $ and $\zbbar$ by their negatives. Moreover, if $v$ has length 1 then $\mu_k W$ can be reduced further to $\mp\,\zbbar\zgbar  u +W'$.
 
 Next we want to show that the mutation is a derived equivalence.
 Let $\zd$ denote the last arrow in the path $u$ and write $u=u'\zd$. Note that $u'$ is a non-constant path, since $Q$ does not contain 2-cycles.
If $w$ is a nonzero path in $\mu_k Q$ starting at $k$ such that $\zbbar w=0$ then $\zbbar$ and a prefix of $w$ must lie in a relation. Since $\zbbar$ is a boundary arrow, the only such relation is $\partial_\zd W=\zbbar\zgbar u'$.  Thus $w$ is equivalent to a path $\zgbar u' w'$. However, since $u$ is a boundary path, the only arrow starting at the endpoint of $u'$ is $\zd$. Then $w\ne 0$ implies $w=\zgbar u'$. In this situation, $\zabar w =\zabar \zgbar u'$ is a nonzero path.

We have thus shown that for every nonzero path $w$ starting at $k$ in $\mu_k Q$ there is an arrow $\ze=\zabar$ or $\zbbar$ such that $\ze w\ne 0$. 
Moreover, note that $\wt(\zb)=1$, since $u$ is a boundary path. Thus Lemma~\ref{lem 1} implies that 
before mutating, if $w$ is a nonzero path in $Q$ ending at $k$ then $w\zb\ne 0$.
This shows that $\mu_k$ is a derived equivalence by Theorem~\ref{thm BHL}.

To check that the total weight is preserved, note that in $Q$
\[\wt(\za)=a, \ \cwt(\za)=2,\ \wt(\zb)=1,\ \cwt(\zb)=b,\]
with $a,b\in\{1,2\}$, and in $\mu_k Q$
\[\wt(\zabar)=1, \ \cwt(\zabar)=b,\ \wt(\zbbar)=\wt(\za)=a,\ \cwt(\zbbar)=2,\]
independent of the length of $v$. Thus the sum of the weights and coweights of the arrows, and hence the total weight, remains unchanged.
 \end{proof}

\subsection{Singular equivalences}
Let $B=\textup{Jac}(Q,W)$ be a dimer tree algebra. In this subsection, we describe two specific situations where a local change in the quiver with potential produces a singular equivalent algebra.

\begin{lemma} \label{lem sing}
 If $Q$ contains the subquiver on the left below, then replacing that subquiver with the quiver on the right induces a singular equivalence on the corresponding algebras that preserves the total weight of the quivers. 
 \[\xymatrix@C30pt@R10pt{
1\ar[r]^\rho &2\ar@/_16pt/@{.>}[l]_v \ar[dd]^\zs\\
&&3\ar[lu]_\za\\
5\ar@{.>}[uu] ^u & 4\ar[l]^\zg \ar[ru]_\zb
}
\qquad\qquad
\xymatrix@C30pt@R10pt{
1\ar[r]^\rho &2\ar@/_16pt/@{.>}[l]_v \ar@{<-}[dd]^{\zsbar}\ar[ldd]_(0.38)*!/u-16pt/{\rotatebox{45}{\scriptsize$[\zs\zg]$}}\\
&&3'\ar@{<-}[lu]_{\zebar}\\
5\ar@{.>}[uu] ^u & 4\ar@{<-}[l]^{\zgbar} \ar@{<-}[ru]_{\zdbar}
}
\]
Here $\za,\zb,\zg$ are boundary arrows and $u,v$ are paths that complete the chordless cycles. The path $u$ is allowed to be of length 0; in that case the vertices 1 and 5 are the same and the quivers become
 \[\xymatrix@C30pt@R10pt{
1\ar[r]^\rho&2\ar@/_16pt/@{.>}[l]_v \ar[ddl]_\zs\\
\\
4\ar[uu]^\zg\ar[r]^\zb & 3 \ar[uu]_\za
}
\qquad\qquad
\xymatrix@C30pt@R10pt{
1&2\ar@/_16pt/@{.>}[l]_v \ar@{<-}[ddl]_{\zsbar}\\
\\
4\ar@{<-}[uu]^{\zgbar}\ar@{<-}[r]^{\zdbar} & 3' \ar@{<-}[uu]_\zebar
}
\]

\end{lemma}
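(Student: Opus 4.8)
The plan is to recognise the passage from the left-hand quiver to the right-hand quiver as the \emph{reduced mutation} $\mu_k(Q,W)$ of the quiver with potential at the vertex $k$ which is the target of the interior arrow $\zs$ and the source of the two boundary arrows $\zg,\zb$ (the vertex labelled $4$ in the picture), and then to prove that this particular mutation induces a singular equivalence. Unlike the situations in Lemma~\ref{lem der} it is \emph{not} a derived equivalence in general: the length-$3$ chordless cycle through $\zs,\zb,\za$ forces the zero relations $\zs\zb=0$ and $\za\zs=0$ in $B$, and these destroy the path-extension hypotheses of Theorem~\ref{thm BHL}. So the singular equivalence has to be produced by hand from Chen's Theorem~\ref{thm chen}, using those same relations as the crucial input.

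First I would compute $\mu_k(Q,W)$ explicitly along the recipe of Section~\ref{sect QP}: $\zs$ is reversed to $\zsbar$, the arrows $\zg,\zb$ out of $k$ are reversed to $\zgbar,\zbbar$, the composites $[\zs\zg]$ and $[\zs\zb]$ are introduced, and the new potential is $[W]+\zD_k$. The cubic term $\zs\zb\za$ of $W$ becomes the $2$-cycle $[\zs\zb]\za$; using the cyclic derivative $\partial_{[\zs\zb]}$ of $[W]+\zD_k$ one expresses $\za$ in terms of $\zbbar\,\zsbar$, substitutes, and thereby eliminates $\za$ together with the $2$-cycle. In the generic case this is the only reduction, and after renaming the vertex $3$ to $3'$, the arrows $[\zs\zb],\zbbar$ to $\zebar,\zdbar$, and applying the sign-normalising change of basis $\bar\eta\mapsto\pm\bar\eta$, one obtains precisely the right-hand quiver with potential of the statement; in particular one reads off that $Q'$ satisfies Definition~\ref{def Q} (it has one chordless cycle more than $Q$, still with a tree as dual graph), so $B'$ is again a dimer tree algebra. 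When the path $u$ has length $0$, so that $5=1$, the composite $[\zs\zg]$ and $\rho$ form a second $2$-cycle, which is reduced in the same way (eliminating $\rho$), producing the degenerate right-hand quiver.

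For the singular equivalence I would peel off the vertex $3$ from $B$ via Theorem~\ref{thm chen}. Writing $e=1-e_3$ and $A=eBe$, the schurian property (Proposition~\ref{prop schurian}) gives $e_3Be_3=\kb$, so $B$ is the matrix algebra $\bigl(\begin{smallmatrix}A & M\\ N & \kb\end{smallmatrix}\bigr)$ with $M=eBe_3$, $N=e_3Be$ and $\phi$ the multiplication of $B$. Here the module of paths reaching $3$ is spanned by $\zb$ alone, because every longer such path factors through $\zs\zb=0$; the relations $\zs\zb=0=\za\zs$ are exactly the annihilation hypotheses $N(\im\phi)=0$ and $(\im\phi)M=0$ of Theorem~\ref{thm chen}, and the schurian property, together with the fact that any detour returning to a vertex produces a cyclic (hence zero) path, is what makes $\phi$ a monomorphism. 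The relation $\zb\za=\pm\zg u\rho$ coming from $\partial_\zs W$ then identifies $\im\phi$ with the two-sided ideal of $A$ generated by $\zg u\rho$, and Theorem~\ref{thm chen} gives that $B$ is singular equivalent to $A/(\zg u\rho)$, which one checks is the dimer tree algebra supported on $Q\setminus\{3\}$, now with $\zs$ a boundary arrow. The symmetric computation at $3'$ shows $B'$ is singular equivalent to the dimer tree algebra on $Q'\setminus\{3'\}$, and the latter is obtained from $A/(\zg u\rho)$ by exactly the reduced mutation at the vertex $4$. Since that smaller mutation involves one fewer chordless cycle, it is then handled either directly by Theorem~\ref{thm BHL} (via Lemmas~\ref{lem 1} and~\ref{lem 1 dual}), or by induction, or as a case of Lemma~\ref{lem der}(a) with $\za=\zs$, $\zb=\zg$; composing the equivalences gives $\scmp\,B\cong\scmp\,B'$. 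Finally, preservation of the total weight is a direct bookkeeping check: one writes down the cycle and cocycle paths of $\za,\zb,\zg$ in $Q$ and of $\zsbar,\zgbar,[\zs\zg],\zebar,\zdbar$ in $Q'$ and compares the sums of weights and coweights.

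The step I expect to be the main obstacle is the latter half of the third paragraph: matching $A/(\zg u\rho)$ with the reduced-mutated algebra and verifying that the induced smaller mutation is again a singular equivalence, which is where one must either set up a clean induction on the number of chordless cycles or re-derive, inside the smaller algebra, the weight condition needed to invoke Lemma~\ref{lem der}(a). The explicit computation of $\mu_k$ — the sign normalisation and, above all, the degenerate case $\ell(u)=0$ — is the other delicate point, whereas the Chen's-theorem application itself is essentially mechanical once the matrix decomposition is set up and the vanishing of $\zs\zb$ and $\za\zs$ is in hand.
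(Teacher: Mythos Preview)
Your identification of the passage $Q\to Q'$ as the reduced mutation $\mu_4(Q,W)$ is incorrect, and this undermines the whole framework. When you reduce the $2$-cycle $[\zs\zb]\za$, standard QP reduction removes \emph{both} arrows $\za$ and $[\zs\zb]$: after the change of basis $\za\mapsto\za+\zbbar\zsbar$ the term $[\zs\zb]\zbbar\zsbar$ is absorbed and the trivial summand $[\zs\zb]\za$ splits off entirely. The reduced mutated quiver therefore has vertex $3$ as a \emph{source} with a single outgoing arrow $\zbbar\colon 3\to 4$, whereas the target quiver $Q'$ has vertex $3'$ with one incoming arrow $\zebar\colon 2\to 3'$ and one outgoing arrow $\zdbar\colon 3'\to 4$. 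A quick arrow count confirms the mismatch: locally $Q$ has $5$ arrows, reduced $\mu_4 Q$ still has $5$, but $Q'$ has $6$. So you cannot simply rename $[\zs\zb]$ to $\zebar$, since $[\zs\zb]$ no longer exists after reduction.

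Your fallback strategy in the third paragraph---peel off $3$ from $B$ and $3'$ from $B'$ via Theorem~\ref{thm chen}, then match the smaller algebras by mutation at $4$---also has a gap. For Chen's theorem to apply at vertex $3$ you need $\phi$ to be a monomorphism, which here means $\zb(\za q)\ne 0$ whenever $\za q\ne 0$. By Lemma~\ref{lem 1 dual} this is equivalent to $\cwt(\zb)=1$, but Lemma~\ref{lem sing} imposes no such hypothesis, and indeed the paper's own weight computation records $\cwt(\zb)=a\in\{1,2\}$ as a free parameter. The schurian property alone does not force injectivity. The symmetric peeling at $3'$ has the same defect (it would require $\wt(\zdbar)=1$, again not guaranteed), and the intermediate mutation at $4$ in $Q\setminus\{3\}$ is not covered by Lemma~\ref{lem der}(a) without a weight hypothesis on $\zg$.

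The paper's actual route sidesteps all of these weight conditions. It first performs a one-point \emph{coextension} at vertex $4$, introducing a genuinely new vertex $3'$ and a free arrow $\zd\colon 4\to 3'$; this is a singular equivalence by Corollary~\ref{cor Chen 1} with no hypotheses. With $\zd$ in place, every nonzero path ending at $4$ extends along $\zd$ unconditionally, so the mutation at $4$ now satisfies Theorem~\ref{thm BHL} and is a derived equivalence. After this mutation the old vertex $3$ has become a source with only $\zbbar$ out, and its removal is again the trivial case of Corollary~\ref{cor Chen 1}. The coextension is precisely the device that makes the mutation step go through without any control on $\cwt(\zb)$ or $\wt(\zg)$.
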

\begin{proof}
 The proof has three steps. First we perform a one-point coextension at vertex 4, see the left quiver in Figure~\ref{fig lem sing}. This is a singular equivalence by Corollary~\ref{cor Chen 1}. Call the new vertex $3'$.
   Then the mutation at vertex 4, which produces the quiver  shown in the right picture of Figure~\ref{fig lem sing}, is a derived equivalence by Theorem~\ref{thm BHL}, because
 \begin{itemize}
\item [(i)] before mutating, there is exactly one arrow ending at 4 and every nonzero path ending at 4 can be composed with the new arrow  $\zd$, thereby producing a nonzero path,
\item[(ii)] after mutating,  there is exactly one arrow starting at 4 and every nonzero path starting at 4 can be precomposed with the arrow $\zbbar$, producing a nonzero path. 
\end{itemize}
Finally, this last quiver is the one-point extension of the  quiver on the right in the statement of the lemma. Hence the two algebras are singular equivalent. 

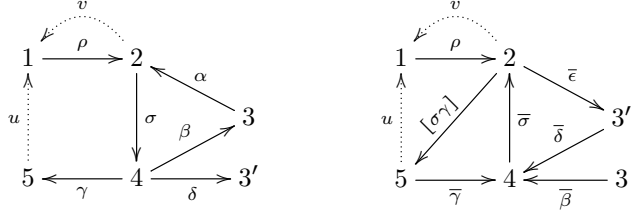
\begin{figure}
\begin{center}
\[\xymatrix@C30pt@R10pt{
1\ar[r]^\rho &2\ar@/_16pt/@{.>}[l]_v \ar[dd]^\zs\\
&&3\ar[lu]_\za\\
5\ar@{.>}[uu] ^u & 4\ar[l]^\zg \ar[ru]^\zb\ar[r]_\zd&3'
}
\qquad\qquad
\xymatrix@C30pt@R10pt{
1\ar[r]^\rho&2\ar[dr]^{\zebar}\ar[ldd]_(0.38)*!/u-16pt/{\rotatebox{45}{\scriptsize$[\zs\zg]$}}\ar@/_16pt/@{.>}[l]_v \ar@{<-}[dd]^{\zsbar}\\
&&3'\\
5\ar@{.>}[uu] ^u & 4\ar@{<-}[l]^{\zgbar} \ar@{<-}[ru]^{\zdbar}\ar@{<-}[r]_{\zbbar}&3
}
\]
\caption{Proof of Lemma \ref{lem sing}. The quiver on  the left is the quiver of  the one-point coextension of $B$ at vertex 4. The quiver on the right is then obtained by mutation at vertex $4$. Note that the vertices $3$ and $3'$ have exchanged their position.}
\label{fig lem sing}
\end{center}
\end{figure}

Clearly, the new quiver $Q'$ satisfies the conditions in Definition~\ref{def Q}. In order to check that the new potential satisfies the condition in Definition~\ref{def B}, let us denote the original potential by
\[W = \pm(\za\zs\zb-\rho\zs\zg u +\rho v)+\widetilde W.\]
The one-point coextensions do not change the potential. On the other hand, the mutation at 4 changes the potential to
\[ 
\begin{array}
 {rcl}
 W'&=& \zebar\zdbar\zsbar\, \pm([\zs\zb] \zbbar\zsbar+ \za[\zs\zb]   - {[\zs\zg]}\zgbar\zsbar   -\rho{[\zs\zg]} u+\rho v)+\widetilde W\\
 &\cong&  \zebar\zdbar\zsbar\, \pm(   - {[\zs\zg]}\zgbar\zsbar  -\rho{[\zs\zg]} u +\rho v)+\widetilde W,
 \end{array}\]
 where $\zebar $ is the arrow $[\zs\zd]$. If the path $u$ has length at least 1 then this potential does not contain any 2-cycles, and our sign conventions can be achieved by a change of bases, replacing $\zsbar$ (and $\zebar$ if necessary) by its negative. 
 
 On the other hand, if $u$ has length 0 then we also need to remove the 2-cycle $\rho[\zs\zg]$ from $W'$. This is done using the relations $[\zs\zg]=v$ and $\rho=-\zgbar\zsbar$, which are obtained from the derivatives in the arrows $\rho$ and $[\zs\zg]$, and we get the desired potential
 \[W'\ \cong \   \zebar\zdbar\zsbar\, - v\,\zgbar\zsbar +\widetilde W,\]
using the change of bases replacing $\zgbar$ by $-\zgbar$, if necessary.

To show that the total weight is preserved under this operation, we compute the weights and coweights in $Q$
\[ \wt(\za) =1, \ \wt(\zb)=2,\ \cwt(\zb)=a,\ \wt(\zg)=b,\ \cwt(\zg)=1\]
with $a,b \in\{1,2\}$. On the other hand, in the new quiver, we have
\[ \wt(\zebar) =2, \ \cwt(\zebar)=1,\ \wt(\zdbar)=b,\ \wt(\zgbar)=1,\ \cwt(\zgbar)=a\]
where $a$ and $b$ are the same as above. This completes the proof.
\end{proof}

\subsubsection{Removing a 3-cycle} 
In this subsection, we study when the removal of a boundary 3-cycle induces a singular equivalence.
\begin{lemma}\label{lem 3cycle}
 If $\za,\zb$ are boundary arrows that lie in a 3-cycle $\za\zb\zg$, and $b,b'$ are paths such that $b\za\ne 0$, $\zb b'\ne 0$ and  $b\za\zb b'=0$
then $\za\zb b'=0$ or $b\za\zb=0$.
\end{lemma}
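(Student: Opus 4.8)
I would work directly in $B=\textup{Jac}(Q,W)$, exploiting that $\za$ and $\zb$ each lie in only one chordless cycle. Write the $3$-cycle as $\za\colon x_0\to x_1$, $\zb\colon x_1\to x_2$, $\zg\colon x_2\to x_0$, so $C_0=\za\zb\zg$ is the unique chordless cycle through $\za$ and through $\zb$. Then $\partial_\za W=\pm\zb\zg$ and $\partial_\zb W=\pm\zg\za$, hence $\zb\zg=0$ and $\zg\za=0$ in $B$. If $\zg$ is itself a boundary arrow then $\partial_\zg W=\pm\za\zb$, so $\za\zb=0$ and the conclusion $\za\zb b'=0$ is immediate; so I may assume $\zg$ is an interior arrow, lying in exactly one other chordless cycle $C_1=\zg\,p$, where $p\colon x_0\to x_2$ is a path involving none of $\za,\zb,\zg$. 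The relation $\partial_\zg W=\pm(\za\zb-p)$ then gives $\za\zb=p$ in $B$.

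I would then argue by contraposition: assume $b\za\zb\neq 0$ and $\za\zb b'\neq 0$ and deduce $b\za\zb b'\neq 0$; the standing hypotheses $b\za\neq 0$, $\zb b'\neq 0$ are automatic since these are subpaths. Because $\zg\za=0$ and $b\za\neq 0$, the path $b$ is not equivalent to a path ending in $\zg$; since $B$ is schurian a nonzero path cannot repeat a vertex, so in fact $b$ contains none of $\za,\zb,\zg$ (using $b\za\neq 0$), and dually neither does $b'$. As $\za\zb=p$, it suffices to show $b\,p\,b'\neq 0$, knowing $bp\neq 0$ and $pb'\neq 0$; note that $p$, being $C_1$ with the single arrow $\zg$ deleted, also involves none of $\za,\zb,\zg$, hence so does $b\,p\,b'$.

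The combinatorial core is to locate where a zero relation could sit. By the description of nonzero paths in a dimer tree algebra in \cite{SS}, $b\,p\,b'=0$ forces some path equivalent to $b\,p\,b'$, under the commutativity relations $\partial_\eta W$ at interior arrows $\eta$, to contain a \emph{zero relation} $C\setminus\xi$ (a chordless cycle with one of its boundary arrows removed) as a subpath. I would show that such a subpath must lie inside a path equivalent to $bp=b\za\zb$ or inside a path equivalent to $pb'=\za\zb b'$, contradicting one of the two assumptions. The mechanism is that two distinct chordless cycles share at most one arrow (Proposition~\ref{prop Q}), so no chordless cycle other than $C_1$ contains two consecutive arrows of $p$; and $C_1\setminus\xi$ contains $\zg$ for every boundary arrow $\xi$ of $C_1$, while no path equivalent to $b\,p\,b'$ contains $\zg$. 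Thus a zero relation can meet the middle block in at most a single arrow, so it cannot span from (a commutation-translate of) an arrow of $b$ to one of $b'$ and must be confined to one side.

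The step I expect to be the main obstacle is precisely this localization: one must rule out that a rippling sequence of commutativity moves propagates across the middle $\za\zb$ (equivalently $p$) block and conspires to produce a zero relation genuinely straddling both $b$ and $b'$. Here I would lean on the structural results of \cite{SS} on the shape of nonzero paths in dimer tree algebras and on how the relations $\partial_\eta W$ interact with the chordless-cycle decomposition of $Q$, together with the ``at most one shared arrow'' property, which together pin any such zero relation to a single side.
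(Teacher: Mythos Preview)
Your outline follows the same overall strategy as the paper: dispose of the case where $\zg$ is boundary, pass to the parallel path $p=C_1\setminus\zg$ via $\za\zb=p$, and argue that any zero relation forcing $bpb'=0$ must already sit on one side. The gap is exactly the localization step you flag.

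Your proposed mechanism---a chordless cycle $C\ne C_1$ meets $p$ in at most one arrow, while $C_1\setminus\xi$ always contains $\zg$---controls only the literal path $bpb'$. The zero relation, however, lives in a path merely \emph{equivalent} to $bpb'$ under commutativity rewrites, and after such rewrites the middle block need not be $p$. In particular your assertion that no equivalent path contains $\zg$ is not immediate: $\zg$ can be reintroduced via $\partial_{\zd_1}W$ or $\partial_{\ze_1}W$ when those arrows are interior. An arrow-overlap count on $p$ does not obviously survive this rippling.

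The paper supplies the missing piece in two parts. First it shows, by a short case analysis using the tree structure of the dual graph, that no path equivalent to $b\za\zb b'$ contains a cycle; this keeps the equivalent paths under control and is a step absent from your sketch. Second, and decisively, it localizes in the \emph{quiver} rather than on the path: removing $Q(u)\cup\{k,\zg\}$ (where $u$ is the interior of your $p$) disconnects $Q$ into two components $Q(\zd)$ and $Q(\ze)$, which carry $b$ and $b'$ respectively. Any equivalent path $w$ must still cross from one side to the other through $\za\zb$ or through $u$, and after ruling out $\eta\in\{\za,\zb\}$ and $\eta$ an arrow of $u$ or of $Q(u)$, the boundary arrow $\eta$ with $\partial_\eta W\subset w$ is forced into $Q(\zd)$ or $Q(\ze)$. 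In either case the same relation already kills $b\za\zb$ or $\za\zb b'$. So the localization works because the $3$-cycle together with $Q(u)$ separates $Q$, not because of a local overlap count with $p$; that separation argument is what your sketch is missing.
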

\begin{proof} If $\zg$ is a boundary arrow then $Q$ is a single 3-cycle and the conclusion follows. Suppose now that $\zg$ is not a boundary arrow.
 Let $\ze_1 \zg \zd_1 u$ be the other chordless cycle at $\zg$, where $u$ is a path of length $\ge 0$, see Figure~\ref{fig lem 3cycle}. 
 \begin{figure}
\begin{center}
\[\xymatrix@R15pt@C35pt{
&&&k\ar[rd]^\zb 
\\
&&\cdot\ar[ru]^\za \ar[d]^{\zd_1} 
&&\cdot \ar@{.>}[rd]^{v_1} 
\ar[ll]_{\zg}\\
&\cdot \ar@{.>}[ru] ^{u_1} \ar[d]_{\zd_3}
& \cdot \ar[l]_{\zd_2} \ar@{.>}[rr]^u
&&\cdot\ar[u]^{\ze_1} \ar@{.>}[rd]_{v_2}
&\cdot \ar[l]_{\ze_2}
\\
\dots&\cdot\ar@{.>}[ru]_{u_2}
&&&& \cdot\ar[u]_{\ze_3} 
&\dots
}\]
\caption{Proof of Lemma \ref{lem 3cycle}.}
\label{fig lem 3cycle}
\end{center}
\end{figure}
The figure also shows the beginning of the cycle path $\mathfrak{c}(\zb)=\zb\zg\zd_1\zd_2\zd_3\dots$ and the end of the cocycle path $\overline{\mathfrak{c}}(\za)=\dots\ze_3\ze_2\ze_1\zg\za$, as well as the paths $u_i,v_i$ that complete the chordless cycles along these paths.
 
 Let $\widetilde{b}$ denote an arbitrary path equivalent to $b$ and $\widetilde{b'}$ a path equivalent to $b'$. First we show that $\widetilde{b}\za\zb \widetilde{b'}$ does not contain a cycle. Indeed, $\widetilde{b}\za$ and $\zb \widetilde{b'}$ do not contain cycles because they are nonzero paths. If the composition  $\widetilde{b}\za\zb \widetilde{b'}$ contains a cycle, then $\widetilde{b}$ and $\widetilde{b'}$ must share a vertex,
and since the dual graph is a tree, the only possible ways are
\begin{enumerate}
\item the path $\widetilde{b}$ or $\widetilde{b'}$ uses $\zg$ (but this is impossible, because $\zg\za=\zb\zg=0$).

\item  one of $\widetilde{b},\widetilde{b'}$ contains a segment that runs antiparallel to the path $u$. 
In this case, either $\widetilde{b}$ is equivalent to a path that contains $\zd_2 u_1$ as a subpath, which is impossible because $\zd_2 u_1= u \ze_1 \zg$ and $\zg\za =0$; 
or $\widetilde{b'}$ is equivalent to a path that contains $v_1 \ze_2$, which is impossible because $v_1 \ze_2=\zg \zd_1 u$ and $\zb\zg=0$.  


\item $u$ is a constant path and $\widetilde{b},\widetilde{b'}$ both contain its vertex. Then again $\widetilde{b}$ is equivalent to a path that contains $\zd_2 u_1$, and we have the same contradiction as in case (2) above.

\end{enumerate}
 
 So 
 $\widetilde{b}\za\zb \widetilde{b'}$ does not contain a cycle, for any paths $\widetilde{b}\cong b$ and $\widetilde{b'}\cong b'$.
 Also note that $\widetilde{b'}$ cannot contain $v_1\ze_2$ as a subpath, because $\zb v_1 \ze_2=\zb\zg\zd_1 u=0$, since $\zb\zg=0$. 
 Then $b\za\zb b'=0$ implies that there is a path $w$ that is equivalent to $b\za\zb b'$ and there exists a boundary arrow $\eta$ such that $w$ contains $\partial_\eta W$ as a subpath. 
 
 We first show that the path $w$ must contain either $\za\zb$ or $u$ as a subpath. Indeed, while we may replace $\za \zb$ by $\zd_1 u \ze_1$, and also $u_1 \zd_1$ by $\zd_3 u_2$, the resulting paths $u_2$ and $u$ are not in a relation because otherwise there would be a cycle in the dual graph formed by the chordless cycles around the vertex $s(u)$. A similar argument at the vertex $t(u)$ shows that we cannot remove both $u$ and $\za \zb$  from the path $w$.
 Thus we need one of the paths $\za\zb$ or $u$ to go from the left part of the quiver to the right.
 
Also note that the boundary arrow $\eta$ cannot be $\za $ or $\zb$ because, otherwise,  $b\za\zb b'$ would contain the cycle $(\partial_\za W) \za$ or $\zb(\partial _\zb W)$. 

  Furthermore $\eta$ cannot be an arrow of $u$,
because if $u=u'\eta u''$ then $\partial_\eta W =u'' \ze_1\zg\zd_1u'$ 
and $w $ must go through the starting point $s(u)$ of $u$ first in order to get to the starting point of $u''$.
Then $w$ would contain a subpath $w'=u\ze_1\zg\zd_1u$, since it has to reach $t(u)$ in order to join the path $b'$; otherwise $t(u)$ would be an interior point of $Q$, contradicting the assumption that the dual graph is a tree. 
Moreover, $w$ must be equivalent to a path that contains $\zd_1w'$ or $w'\ze_1$ as a subpath, because $w$ is equivalent to the path $b\za\zb b'$. Thus $w'\cong \za\zb\zg\zd_1 u$ and hence $b'$ is equal to a path using $\zg$, which is a contradiction to (1).

Let $Q(u)$ be the subquiver given as the connected component of $Q\setminus\{s(u),t(u)\}$ containing the remaining vertices of $u$.
Then the same argument also shows that $\eta$ cannot be an arrow in the subquiver $Q(u)$.

Finally, let $Q(\zd)$ and $Q(\ze)$ be the two connected components of $Q\setminus(Q(u)\cup\{k,\zg\}).$ We have shown that the arrow $\eta$ lies in $Q(\zd)$ or in $Q(\ze)$. In the first case, we see that the path $b\za\zb$ is equivalent to a path that contains the same zero relation $\partial_\eta W$ because the dual graph is a tree. Thus $b\za\zb=0$. In the second case, the path  $\za\zb b'$ is equivalent to a path that contains  $\partial_\eta W$. Thus $\za\zb b'=0$. 
\end{proof}

\begin{prop}
\label{prop 3cycle}
Let $B=\textup{Jac}(Q,W)$ be a dimer tree algebra. Suppose $Q$ contains a subquiver of the form
\[\xymatrix{&k\ar[rd]^\zb 
\\
\cdot\ar[ru]^\za  
&&\cdot  
\ar[ll]_{\zg}\\
}\]
with $\za,\zb$ boundary arrows. Let $(Q',W')$ be the quiver with potential obtained from $(Q,W)$ by removing the vertex $k$ and the arrows $\za$ and $\zb$ from the quiver and the cycle $\za\zb \zg$ from the potential. Let $B'=\textup{Jac}(Q',W')$ be the corresponding Jacobian algebra.
If  $\cwt(\za)=1$ and $\wt(\zb)=1$ then the algebras $B$ and $B'$ are singular equivalent and both have the same total weight.
\end{prop}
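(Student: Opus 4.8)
\emph{Plan.} The plan is to exhibit $B$ as a triangular matrix algebra and apply Chen's theorem (Theorem~\ref{thm chen}). Write the $3$-cycle as $a\xrightarrow{\alpha}k\xrightarrow{\beta}c\xrightarrow{\gamma}a$. Since $\alpha,\beta$ are boundary arrows, $\alpha$ is the unique arrow into $k$ and $\beta$ the unique arrow out of $k$; moreover the hypotheses $\cwt(\alpha)=1$, $\wt(\beta)=1$ exclude $Q$ being a single $3$-cycle, so $\gamma$ is an interior arrow and lies in a second chordless cycle $C'=\gamma v$, where $v\colon a\to c$ is the path completing $C'$ (note $v$ avoids $k$). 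Taking cyclic derivatives of $W$ gives the relations $\beta\gamma=0$, $\gamma\alpha=0$ and $\alpha\beta=\pm v$ in $B$. In particular $\beta\gamma\alpha=0$, so $e_kBe_k=\kb e_k$; hence, writing $e=1-e_k$, $A=eBe$, $M=eBe_k$, $N=e_kBe$, we have $B\cong\bigl(\begin{smallmatrix}A&M\\N&\kb\end{smallmatrix}\bigr)$ with $\phi\colon M\otimes_\kb N\to A$ the multiplication of $B$. Because every path into $k$ ends in $\alpha$ and every path out of $k$ begins with $\beta$, we get $M=A\alpha$ and $N=\beta A$, and therefore $\im\phi=MN=A(\alpha\beta)A=\langle v\rangle$, the two-sided ideal of $A$ generated by $v$.

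Two of the three hypotheses of Theorem~\ref{thm chen} are immediate: $NM\subseteq e_kBe_k\cap(\rad B)^2=0$, so $(\im\phi)M=M(NM)=0$ and $N(\im\phi)=(NM)N=0$. The real content is that $\phi$ is a monomorphism, and here all three hypotheses of the proposition are used. As $B$ is schurian, $M$ has a $\kb$-basis given by the essentially unique nonzero paths $p\colon i\to k$, and $N$ one given by the nonzero paths $q\colon k\to j$; the corresponding $\phi(p\otimes q)=pq$ lie in the distinct spaces $e_iBe_j$, so $\phi$ is injective iff $pq\ne0$ in $B$ for all such $p,q$. Write $p=b\alpha$ and $q=\beta b'$; if $b\alpha\beta b'=0$ then Lemma~\ref{lem 3cycle} forces $\alpha\beta b'=0$ or $b\alpha\beta=0$. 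But $\alpha\beta b'=\alpha(\beta b')\ne0$ because $\beta b'$ is a nonzero path starting at $k$ and $\cwt(\alpha)=1$ (Lemma~\ref{lem 1 dual}), and $b\alpha\beta=(b\alpha)\beta\ne0$ because $b\alpha$ is a nonzero path ending at $k$ and $\wt(\beta)=1$ (Lemma~\ref{lem 1})---a contradiction. So $\phi$ is a monomorphism, and Theorem~\ref{thm chen} yields a triangle equivalence between the singularity categories of $B$ and of $A/\im\phi=A/\langle v\rangle$.

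Next I would identify $A/\langle v\rangle$ with $B'$. The quiver of $A=eBe$ is $Q'$ (the only potential new arrow $a\to c$ would be $\alpha\beta=\pm v\in(\rad B)^2$, which is not an arrow), and its relations are $I_A=\langle\,\partial_\eta W:\eta\in Q_1',\ \eta\ne\gamma\,\rangle$, because $\partial_\alpha W=\pm\beta\gamma$, $\partial_\beta W=\pm\gamma\alpha$ and $\partial_\gamma W=\pm(\alpha\beta-v)$ all vanish in $A$. On the other hand $B'=\kb Q'/I_{B'}$, where $W'$ is obtained from $W$ by deleting the term $\alpha\beta\gamma$, so $\partial_\eta W'=\partial_\eta W$ for $\eta\ne\gamma$ while $\partial_\gamma W'=\mp v$; hence $I_{B'}=I_A+\langle v\rangle$ and $B'\cong A/\langle v\rangle\cong A/\im\phi$. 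This gives the singular equivalence.

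Finally, for the total weight: deleting $C$ removes the boundary arrows $\alpha,\beta$ and turns the former interior arrow $\gamma$ into a boundary arrow of $Q'$. From Definition~\ref{def cycle path 2} one checks that $\mathfrak c(\alpha)=\alpha\beta$, so $\wt(\alpha)=2$; that $\mathfrak c_{Q'}(\gamma)$ equals $\mathfrak c_Q(\beta)$ with its initial letter $\beta$ removed, so $\wt_{Q'}(\gamma)=2$ since $\wt_Q(\beta)=1$; and that, writing $\alpha^\sharp$ for the boundary arrow whose cycle path ends at $\alpha$ (so $\wt_Q(\alpha^\sharp)=\cwt(\alpha)=1$), $\mathfrak c_{Q'}(\alpha^\sharp)$ equals $\mathfrak c_Q(\alpha^\sharp)$ with its final letter $\alpha$ removed, so $\wt_{Q'}(\alpha^\sharp)=2$; the weights of all other boundary arrows are unchanged. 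Hence the total weight changes by $-\wt(\alpha)-\wt(\beta)+\wt_{Q'}(\gamma)+\bigl(\wt_{Q'}(\alpha^\sharp)-\wt_Q(\alpha^\sharp)\bigr)=-2-1+2+1=0$. The main obstacle is the monomorphism of $\phi$: that is exactly where the arithmetic hypotheses $\cwt(\alpha)=1$ and $\wt(\beta)=1$ enter, in combination with the technical Lemma~\ref{lem 3cycle}; the identification of $A/\im\phi$ with $B'$ and the weight count are essentially bookkeeping.
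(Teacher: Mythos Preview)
Your proof is correct and follows essentially the same strategy as the paper: realize $B$ as a matrix algebra over the corner $A$ at the idempotent $e=1-e_k$, verify the hypotheses of Chen's theorem (the crucial injectivity of $\phi$ coming from Lemma~\ref{lem 3cycle} together with Lemmas~\ref{lem 1} and~\ref{lem 1 dual} under the weight hypotheses), identify $A/\im\phi$ with $B'$, and finish with the weight count. Your choice $A=eBe$ is in fact the correct Peirce corner needed for the matrix decomposition, and your identification $A/\langle v\rangle\cong B'$ is carried out cleanly; the paper's wording $A=B/Be_kB$ is slightly loose, but the argument is the same.
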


\begin{proof} Our first step is to use
 Theorem~\ref{thm chen} to show that $B$ and $B'$ are singular equivalent if and only if for all paths $b,b'$ such that $b\za$ and $\zb b'$ are nonzero paths, the composition $b\za\zb b'$ is nonzero. 
 Indeed, we can write $B$ as a matrix algebra
 \[B= 
\begin{pmatrix}
 A& A \za \\ \zb A &\kb
\end{pmatrix}\]
where $A=B/Be_k B$ is the quotient of $B$ by the two-sided ideal generated by all paths that contain the vertex $k$,  $A\za$ is the  left $A$-module generated by $\za$, and $\zb A$ is the right $A$-module generated by $\zb$. The multiplication can be seen in matrix form as shown below, where we use  the fact that $\zb c_1 b_2 \za=0$, because every non-constant cyclic path is zero in $B$, in the lower right corner.
\[
\begin{pmatrix}
 a_1& b_1\za \\ \zb c_1 &\zl_1
\end{pmatrix}
\begin{pmatrix}
 a_2& b_2\za \\ \zb c_2 &\zl_2 
\end{pmatrix}
=
\begin{pmatrix}
 a_1a_2 +b_1\za\zb c_2& a_1b_2\za +\zl_2 b_1\za \\ 
 \zb c_1 a_2+\zl_1\zb c_2  &\zl_1\zl_2
\end{pmatrix}
\]
where $a_i,b_i,c_i\in A$ and $\zl_i\in \kb$.

Let us now check that in this situation the conditions of Theorem~\ref{thm chen} are satisfied.
 The morphism $\phi\colon A\za\otimes_\kb\zb A \to A$ is given by concatenation
 $\phi (b\za\otimes \zb c) =b\za\zb c$. We have $(\im\, \phi) (A\za) = 0$ and $(\zb A)(\im\, \phi)=0$, because every non-constant cyclic path is zero in $B$. The only remaining condition is that $\phi $ is mono. 
 Since any two nonzero parallel paths are equal in $B$ it suffices to show that if $b \za$ and $\zb c$ are nonzero paths then $b\za\zb c$ is a nonzero path. 
 
  Because of Lemma~\ref{lem 3cycle}, it suffices to show that $b\za\zb$  and $\za\zb c$ are both nonzero. For the first path this follows directly from Lemma~\ref{lem 1}, because $b\za\ne 0$ and $\wt(\zb)=1$.
For the second path, it follows from  Lemma~\ref{lem 1 dual}, because $\zb b'\ne 0$ and $\cwt(\za)=1$.

Therefore Theorem~\ref{thm chen} implies that $B$ and $A/\im\, \phi$ are singular equivalent. Note that $\im\, \phi$ is the ideal in $A$ generated by all paths that have $\za\zb$ as a subpath. In particular, if we let $ u\ne \za\zb$ be the unique other path  in $Q$ that 
forms a chordless cycle $\zg u$ with $\zg$, then in $B$ the path $u$ is equal to the path $\za\zb$. In particular, the path $u$ is nonzero in $A$, and it is a generator of the ideal $\im\, \phi$. Thus in $A/\im\, \phi$ we have the additional relation $u=0$ which is equal to the derivative $\partial_\zg W'$ of the potential $W'$. Thus $B'\cong A/\im\, \phi$. 

To show that the total weight remains unchanged, observe that in $Q$ we have 
\[\wt(\za) + \wt(\zb)+\cwt(\za)=2+1+1=4.\] 
In $Q'$, the arrow $\zg$ becomes a boundary arrow with weight $\wt(\zg)=\wt(\zb)+1=2$  
and coweight $\cwt(\zg)=\cwt(\za)+1=2$. Thus the total weight is preserved. 
We point out that we do not  need to add the coweight of $\zb$ in this situation, because its contribution to the total weight is already counted in the weight of $\za$, since the cycle path of $\za$ is equal to the cocycle path of $\zb$ (both are $\za\zb$).
\end{proof}

\begin{remark}
The converse of  Proposition~\ref{prop 3cycle} is also true, meaning that if the weight conditions do not hold then the algebras are not singular equivalent. This can be proved using the main result of this paper. Indeed, if $\wt(\zb)=2$ in $Q$ then $\wt(\zg)=1$ in $Q'$, and so the total weight changes. Dually, if $\cwt(\za)=2$ in $Q$ then the first arrow in the cocycle path of $\za$ will change weight from $2$ to 1, and again the total weight is not preserved.
\end{remark}

\subsection{The algorithm}
We are now ready to prove the main result of this section. It generalizes Lu's theorem on simple polygonal-tree algebras.
\begin{thm}\label{thm algo}
Let $B=\textup{Jac}(Q,W)$ be a dimer tree algebra.  Denote by $2N$ the total weight of $Q$, and let $S$ be a polygon with $2N$ vertices. Then there is a triangle equivalence of categories 
\[\scmp \,B \cong \diags.\]
\end{thm}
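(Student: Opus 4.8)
The strategy is an induction on the number of chordless cycles of $Q$, reducing the general case to the case of a single chordless cycle by means of the derived and singular equivalences established in Lemmas~\ref{lem der}, \ref{lem sing} and Proposition~\ref{prop 3cycle}. Recall that derived equivalences preserve the singularity category (hence $\scmp\,B$) and singular equivalences do so by definition; and each of these operations preserves the total weight $2N$, hence the target polygon $S$ is unchanged. The base case is when $Q$ has exactly one chordless cycle: then $B$ is a Nakayama algebra (a cyclic quiver with one zero relation coming from the potential together with the boundary-arrow relations), and the statement $\scmp\,B\cong\diags$ follows from the known description of syzygies over self-injective Nakayama algebras together with the identification of $\diags$ with the $2$-cluster category of type $\mathbb A_{N-2}$ in Theorem~\ref{thm BM}; alternatively one invokes Kalck's description \cite{K} of the singularity category of a gentle algebra, since a quiver with a unique chordless cycle gives a gentle algebra.

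For the inductive step, I would argue as follows. Since the dual graph $G$ of $Q$ is a tree with at least two vertices coming from chordless cycles, choose a chordless cycle $C$ at maximal distance $d(C)$ from $C_0$; then $C$ is a ``leaf'' of $G$, i.e.\ it shares exactly one interior arrow $\zg$ with the rest of the quiver, and all its other arrows are boundary arrows. The key combinatorial point is that one can always find, among the boundary arrows of $C$, a consecutive pair $\za,\zb$ (a path $\za\zb$ inside $C$, or the interior arrow adjacent to a boundary arrow) to which one of Lemmas~\ref{lem der}(a), \ref{lem der}(b), \ref{lem sing}, or Proposition~\ref{prop 3cycle} applies — the applicable case being dictated by the weights $\wt(\cdot),\cwt(\cdot)$ of the arrows of $C$, which are governed by the parities of the relevant cycle paths. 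One checks that after performing the corresponding operation (a mutation at a degree-two vertex, or the one-point (co)extension trick of Lemma~\ref{lem sing}, or the removal of a boundary $3$-cycle), the resulting quiver $Q'$ again satisfies Definition~\ref{def Q}, defines a dimer tree algebra $B'$ with $\scmp\,B\cong\scmp\,B'$ and the same total weight, and has \emph{strictly fewer} chordless cycles (or at least a strictly smaller leaf cycle $C$, so that the process terminates). Applying the inductive hypothesis to $B'$ gives $\scmp\,B'\cong\diags$, and composing equivalences yields $\scmp\,B\cong\diags$.

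The main obstacle — and the part requiring genuine care rather than routine verification — is the \emph{case analysis} that produces, for every possible leaf cycle $C$ and every distribution of weights on its arrows, a reduction step that simultaneously (i) is a derived or singular equivalence, (ii) preserves the dimer tree condition and the total weight, and (iii) genuinely simplifies the quiver, so that the induction is well-founded. In particular one must handle chordless cycles of length $3$ (where Proposition~\ref{prop 3cycle} removes the cycle outright, but only under the weight hypotheses $\cwt(\za)=\wt(\zb)=1$) separately from longer cycles (where Lemma~\ref{lem der} and Lemma~\ref{lem sing} shorten $C$ by peeling off one boundary arrow at a time while rotating weights, eventually creating a $3$-cycle with the right weights). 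Verifying that the weight bookkeeping always works out — i.e.\ that the hypotheses of whichever lemma one wants to apply are actually met, and that iterating the shortening steps cannot stall — is the technical heart of the argument; once that is in place, the termination and the assembly of the equivalences are straightforward. I would organize this as a sequence of sub-cases indexed by $\ell(C)$ and by the weights of the two boundary arrows of $C$ adjacent to the interior arrow $\zg$, reducing in each sub-case to a case already treated.
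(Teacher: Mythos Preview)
Your overall strategy matches the paper's: induct on the number of chordless cycles, and in the inductive step pick a leaf cycle $C$ (one with a single interior arrow) and reduce by a sequence of derived/singular equivalences preserving the total weight.

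However, there is a genuine gap in your reduction. You assert that for any leaf cycle $C$ one of Lemma~\ref{lem der}(a), Lemma~\ref{lem der}(b), Lemma~\ref{lem sing}, or Proposition~\ref{prop 3cycle} always applies. This fails in exactly one case. Write $C$ as $1\to 2\to 3\to\cdots\to m\to 1$ with $\gamma\colon 1\to 2$ the unique interior arrow, $\alpha\colon 2\to 3$, and $\epsilon\colon m\to 1$. Every boundary arrow of $C$ other than $\alpha$ and $\epsilon$ automatically has weight~$2$ and coweight~$2$, since its cycle and cocycle paths stay inside $C$ and have length~$2$. Hence if in addition $\cwt(\alpha)=2$ and $\wt(\epsilon)=2$, then \emph{every} boundary arrow of $C$ has weight and coweight~$2$, and none of your four tools applies: Lemma~\ref{lem der}(a) needs a coweight-$1$ incoming arrow, Proposition~\ref{prop 3cycle} needs $\cwt=\wt=1$, and the local configurations of Lemma~\ref{lem der}(b) and Lemma~\ref{lem sing} require a vertex with two outgoing boundary arrows, which no vertex of a leaf cycle possesses. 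The paper treats exactly this situation (its Case~(2)) by first performing a bare one-point coextension at vertex~$3$ via Corollary~\ref{cor Chen 1}, then a mutation at~$3$ justified directly by Theorem~\ref{thm BHL} (not via Lemma~\ref{lem der}); only after this preprocessing do Lemma~\ref{lem der}(b)-type mutations become available, and the cycle is not shortened but rather brought into the case $\cwt(\alpha')=1$. Without this extra ingredient your induction stalls, so you should add Corollary~\ref{cor Chen 1} and Theorem~\ref{thm BHL} to your toolkit and carry out this sub-case explicitly.

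A minor point on the base case: when $Q$ is a single $n$-cycle the Jacobian relations are the paths of length $n-1$, so for $n>3$ the algebra is \emph{not} gentle and Kalck's result does not apply. Your primary argument via self-injective Nakayama algebras is the right one; the paper identifies $B$ as self-injective cluster-tilted of type~$\mathbb{D}_n$ and verifies $\underline{\textup{mod}}\,B\cong\diags$ directly in its appendix.
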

\begin{proof}
We proceed by induction on the number of chordless cycles in $Q$. If this number is 1, then $B$ is a selfinjective cluster-tilted algebra of Dynkin type $\mathbb{D}_n$ and $\scmp\,B=\underline{\textup{mod}}\,B$. It is well-known that this category is equivalent to the category of 2-diagonals of a polygon with $2n$ vertices. For convenience, we include a proof in the appendix. On the other hand,  $Q$ has $n$ arrows, each a boundary arrow and each having weight 2. So the total weight is $2n$. Thus our result holds in this case.

Now suppose $Q$ has more than one chordless cycle. Since the dual graph is a tree, we may choose a chordless cycle $C_0$ that has exactly one interior arrow $\zg$. Denote by $1,2,\dots,m$ the vertices of $C_0$ in order, where $\zg$ is the arrow from $1$ to $2$. Let $\za$ denote the arrow that follows $\zg$ in $C_0$ and let $\zb$ be the arrow following $\za$, see the first quiver in Figure~\ref{fig algo 1}. Let $u$ be the unique path such that $C_1=\zg u$ is the second chordless cycle that contains $\zg$.
\begin{figure}
\begin{center}
\[
\begin{array}
 {ccccccc}
\xymatrix@C\quiversize@R\quiversize{
&1\ar[r]_\zg & 2 \ar[rd]^\za\ar@{.>}@/_16pt/[l]_u
\\
m\ar[ru] &&&3\ar[dl]^\zb
\\
&5\ar@{.>}[lu]&4\ar[l]
}
& \stackrel{\mu_3}{\longrightarrow}
&
\xymatrix@C\quiversize@R\quiversize{
&1\ar[r]_\zg & 2 \ar@{<-}[rd]^{\zabar}\ar[dd]\ar@{.>}@/_16pt/[l]_u
\\m\ar[ru] &&&3\ar@{<-}[dl]^{\zbbar}
\\
&5\ar@{.>}[lu]&4\ar[l]
}
&\stackrel{\textup{Lemma \ref{lem sing}}}{\longrightarrow}&
\xymatrix@C\quiversize@R\quiversize{
&1\ar[r]_\zg & 2 \ar[ldd]\ar@{->}[rd]^{\zebar}
\ar@{.>}@/_16pt/[l]_u
\\
m\ar[ru] &&&3'\ar@{->}[ld]^{\zdbar}
\\
&5\ar[r]\ar@{.>}[lu]&4\ar[uu]
}
&\stackrel{\mu_5}{\longrightarrow}
\\
\\
\xymatrix@C\quiversize@R\quiversize{
&1\ar[r]^\zg & 2 \ar[ddll]\ar@{<-}[ldd]\ar@{->}[rd]^{\zebar}
\ar@{.>}@/_16pt/[l]_u
\\
m\ar[ru] &&&3'\ar@{->}[ld]^{\zdbar}
\\
6\ar[r]\ar@{.>}[u]&5\ar@{<-}[r]&4
}
&\stackrel{\mu_6\dots\mu_{(m-1)}}{\longrightarrow}
&
\xymatrix@C\quiversize@R\quiversize{
&1\ar[r]^\zg & 2 \ar[dll]\ar@{->}[rd]^{\zebar}
\ar@{.>}@/_16pt/[l]_u
\\
m\ar[ru]\ar[d] &&&3'\ar@{->}[ld]^{\zdbar}
\\
{\begin{smallmatrix} (m-1)\end{smallmatrix}}\ar[rruu]&&4\ar@{.>}[ll]
}

&\stackrel{\mu_m}{\longrightarrow}
&
\xymatrix@C\quiversize@R\quiversize{
&1 & 2 \ar@{<-}[dll]\ar@{->}[rd]^{\zebar}
\ar@{.>}@/_16pt/[l]_u
\\
m\ar@{<-}[ru]\ar@{<-}[d] &&&3'\ar@{->}[ld]^{\zdbar}
\\
{\begin{smallmatrix} (m-1)\end{smallmatrix}}&&4\ar@{.>}[ll]
}

\end{array}
\]
\caption{Proof of Theorem \ref{thm algo}. A sequence of equivalences that reduces the length of the cycle by 1.}
\label{fig algo 1}
\end{center}
\end{figure}
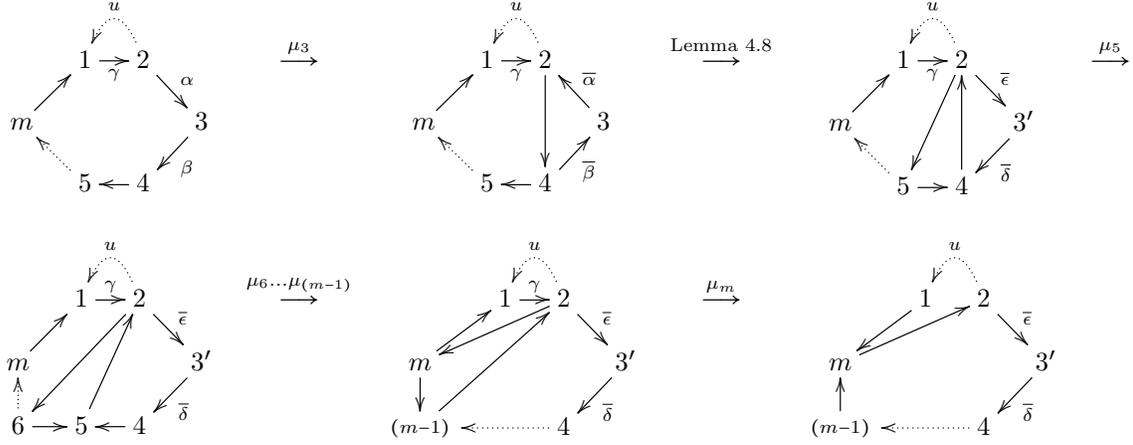
We consider two cases depending on the coweight of the arrow $\za$.

(1) Assume first that $\cwt(\za)=1$. The idea is to successively shorten the chordless cycle $C_0$ until it will be completely absorbed into the cycle $C_1$. Assume first that $m>4$. Then the weight of the arrow $\zb$ is 2. We will now apply a sequence of derived and singular equivalences that preserve the total weight in order to reduce the length of the cycle $C_0$ from $m$ to $m-1$. The corresponding quivers are shown in Figure~\ref{fig algo 1}, and the steps are justified below.

Step 1.  Apply the mutation at 3, which is a derived equivalence that preserves the total weight, by  Lemma~\ref{lem der}(a).

Step 2. Apply the singular equivalence of Lemma \ref{lem sing}.

Steps 3, 4 and 5. Apply the mutations at $5, 6, 7, \dots, m$. They are all derived equivalences,  by  Lemma~\ref{lem der}(b). Note that the last mutation at $m$ makes the arrow $\zg$ disappear.

This sequence of equivalences has reduced the $m$-cycle $C_0$ to an $(m-1)$-cycle. The cocycle path of the arrow $\zebar$ in the new quiver is $\overline{\mathfrak{c}}(\zebar)=
1\to m\to 2\to 3'$.
Thus $\cwt(\zebar)=1$, and we can repeat the sequence of equivalences again.

After $m-4$ rounds, we have reduced our cycle to length 4, and we obtain a subquiver isomorphic to  the first quiver in  Figure~\ref{fig algo 2}. Moreover the coweight of the arrow $\za$ is 1. 
We now apply another sequence of derived and singular equivalences that preserve the total weight. The corresponding quivers are shown in Figure~\ref{fig algo 2}, and the steps are justified below.
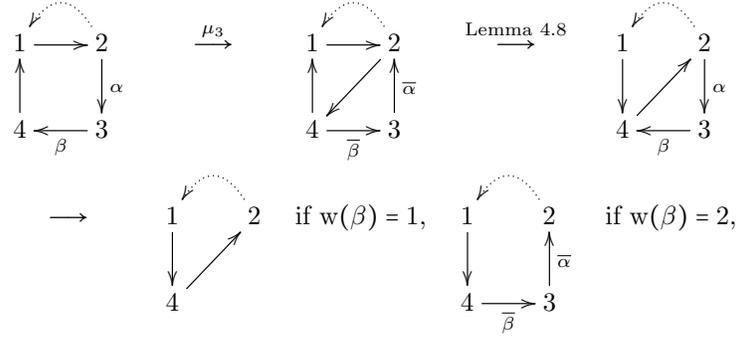
\begin{figure}
\begin{center}
\[
\begin{array}
 {ccccccc}
\xymatrix@C20pt@R20pt{
1\ar[r] & 2 \ar[d]^\za\ar@{.>}@/_16pt/[l]
\\
4\ar[u] &3\ar[l]^\zb
}
&\stackrel{\mu_3}{\longrightarrow}
&
\xymatrix@C20pt@R20pt{
1\ar[r] & 2 \ar@{<-}[d]^{\zabar}\ar[ld]\ar@{.>}@/_16pt/[l]
\\
4\ar[u] &3\ar@{<-}[l]^{\zbbar}
}
&\stackrel{\textup{Lemma~\ref{lem sing}}}{\longrightarrow}
&
\xymatrix@C20pt@R20pt{
1& 2 \ar[d]^{\za}\ar@{<-}[ld]\ar@{.>}@/_16pt/[l]
\\
4\ar@{<-}[u] &3\ar[l]^{\zb}
}
\\
\\
{\longrightarrow}
&
\xymatrix@C20pt@R20pt{
1& 2 \ar@{<-}[ld]\ar@{.>}@/_16pt/[l]
\\
4\ar@{<-}[u]}  &\textup{if $\wt(\zb)=1$,}
&
\xymatrix@C20pt@R20pt{1& 2 \ar@{<-}[d]^{\zabar}\ar@{.>}@/_16pt/[l]
\\
4\ar@{<-}[u] &3\ar@{<-}[l]^{\zbbar}
} &\textup{if $\wt(\zb)=2$,}
\end{array}
\]
\caption{Proof of Theorem \ref{thm algo}. The second sequence of equivalences}
\label{fig algo 2}
\end{center}
\end{figure}

Step 6. Apply  the mutation at $3$, which is a derived equivalence, by Lemma \ref{lem der}(a), since $\cwt(\za)=1$ and $\wt(\zb)=2$. 

Step 7. 
Apply the singular equivalence of Lemma~\ref{lem sing} in the special case where the path $u$ has length 0.

Step 8. Now our cycle has length 3 and $\cwt(\za)=1$. If $\wt(\zb)=1$, we remove $\za,\zb$ and the vertex 3, which is a singular equivalence, by Proposition~\ref{prop 3cycle}. The resulting quiver has one less cycle, and so we are done by induction.

On the other hand, if $\wt(\zb)=2$, we apply the mutation at 3, which is a derived equivalence, by Lemma~\ref{lem der}(a). 
Again the resulting quiver has one less cycle, and we are done by induction.

This completes the proof in the case where the arrow $\za$ in the first quiver in Figure~\ref{fig algo 1} has coweight one.

(2) Now assume that the arrow $\za$ has coweight 2. This case is illustrated in Figure~\ref{fig algo 3}.
If the arrow $\ze$ at the other end of the cycle has weight 1, then we can use the dual argument of case (1). So we may assume without loss of generality that $\wt(\ze)=2$. 
We will apply a sequence of derived and singular equivalences that will reduce to case (1).   The corresponding quivers are shown in Figure~\ref{fig algo 3}, and the steps are justified below.
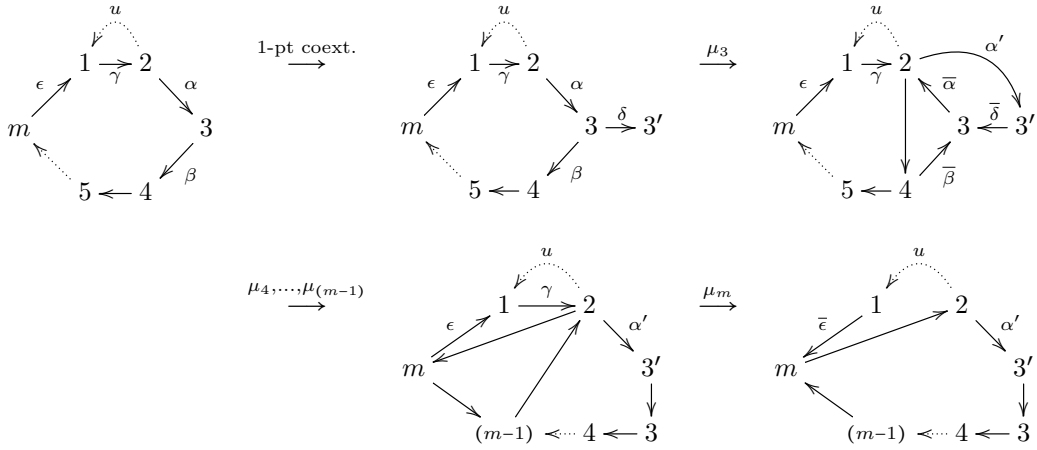
\begin{figure}
\begin{center}
\[
\begin{array}
 {ccccccc}
\xymatrix@C12pt@R12pt{
&1\ar[r]_\zg & 2 \ar[rd]^\za\ar@{.>}@/_16pt/[l]_u
\\
m\ar[ru]^\ze &&&3\ar[dl]^\zb
\\
&5\ar@{.>}[lu]&4\ar[l]
}
& \stackrel{\textup{1-pt coext.}}{\longrightarrow}
&\xymatrix@C\quiversize@R\quiversize{
&1\ar[r]_\zg & 2 \ar[rd]^\za\ar@{.>}@/_16pt/[l]_u
\\
m\ar[ru]^\ze &&&3\ar[dl]^\zb&3'\ar@{<-}[l]_\zd
\\
&5\ar@{.>}[lu]&4\ar[l]
}
& \stackrel{\mu_3}{\longrightarrow}
&\xymatrix@C\quiversize@R\quiversize{
&1\ar[r]_\zg & 2\ar[dd] \ar@{<-}[rd]^\zabar\ar@{.>}@/_16pt/[l]_u\ar@/^16pt/[drr]^{\za'}
\\
m\ar[ru]^\ze &&&3\ar@{<-}[dl]^\zbbar&
3'\ar[l]_\zdbar
\\
&5\ar@{.>}[lu]&4\ar[l]&
}
&
\\
\\
&
\stackrel{\mu_4,\dots,\mu_{(m-1)}}{\longrightarrow}
&
\xymatrix@C\quiversize@R\quiversize{
&1\ar[r]^\zg & 2\ar[dll] \ar@{<-}[ldd]\ar@{.>}@/_16pt/[l]_u\ar@/^0pt/[dr]^{\za'}
\\
m\ar[ru]^\ze\ar[rd] &&&3'\ar[d]
\\
&
{\begin{smallmatrix}
 (m-1)
\end{smallmatrix}}
&4\ar@{.>}[l]&3\ar[l]
}
&\stackrel{\mu_m}{\longrightarrow}
&
\xymatrix@C\quiversize@R\quiversize{
&1 & 2\ar@{<-}[dll] \ar@{.>}@/_16pt/[l]_u\ar@/^0pt/[dr]^{\za'}
\\
m\ar@{<-}[ru]^\zebar\ar@{<-}[rd] &&&3'\ar[d]
\\
&
{\begin{smallmatrix}
 (m-1)
\end{smallmatrix}}
&4\ar@{.>}[l]&3\ar[l]
}

\end{array}
\]
\caption{Proof of Theorem \ref{thm algo}, second case.}
\label{fig algo 3}
\end{center}
\end{figure}

Step 1. Apply a one-point coextension at vertex 3. This is a singular equivalence by Corollary~\ref{cor Chen 1}.  Call the new vertex $3'$ and the new arrow $\zd$.

Step 2. Apply the mutation at vertex 3. This is a derived equivalence by Theorem~\ref{thm BHL}, because
\begin{itemize}
\item [(i)] before mutating, there is exactly one arrow ending at 3, and if $v$ is a nonzero path ending in $3$ then $v\zd\ne 0$;
\item[(ii)] after mutating, there is exactly one arrow starting at 3. Moreover, note that the arrow $\zbbar\colon4\to 3$ has coweight $\cwt(\zbbar)=1$, because before the mutation $\cwt(\za)=2$. Thus if $v$ is a nonzero path starting at $3$ then $\zbbar v\ne 0$, by Lemma~\ref{lem 1 dual}.  
\end{itemize}

Step 3. Apply mutations at $4, 5, \dots, (m-1)$. These are all derived equivalences, by Lemma~\ref{lem der}(b). The resulting quiver is the first quiver in the second row of Figure~\ref{fig algo 3}.

Step 4. Apply the mutation at $m$, which also is a derived equivalence by  Lemma~\ref{lem der}(b).
\smallskip

Note that in the resulting quiver, the first arrow $\za'$ of our chordless cycle now has coweight 1, because $\zebar$ is a boundary arrow. 
Since $\cwt(\za')=1$, we can use the argument of case (1) and the proof is complete.
\end{proof}


\section{Construction of the checkerboard pattern}\label{sect 2}
Throughout this section, $B=\textup{Jac}(Q,W)$ is a dimer tree algebra. In this section, we show that, knowing that the stable syzygy category of $B$ is equivalent to the category of 2-diagonals of a polygon, there exists a unique checkerboard pattern on the polygon that realizes the homological structure of the category, see Theorem~\ref{thm poly}. 
\subsection{Extensions between syzygies and radicals of projectives}

In this subsection, we show that extensions between a syzygy $M$ and the radicals $\rad P(i)$ are determined by the projective resolution of $M$.   We start with a preliminary lemma about the radicals.

\begin{lemma}\label{lem:rad}
The radical $\rad P(i)$ is an indecomposable non-projective syzygy. 
\end{lemma}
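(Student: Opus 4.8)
The plan is to prove the statement in two halves: first that $\rad P(i)$ is a syzygy, then that it is indecomposable and non-projective. Both facts should follow from standard structure theory of dimer tree algebras combined with what the excerpt already sets up.

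For the syzygy part, recall that $B$ is Iwanaga-Gorenstein of Gorenstein dimension one (Keller–Reiten). The key observation is that $\rad P(i)$ is always a first syzygy: indeed, the inclusion $\rad P(i)\hookrightarrow P(i)$ exhibits $\rad P(i)$ as a submodule of a projective, and since $\textup{top}\,P(i)=S(i)$ we have a short exact sequence $0\to\rad P(i)\to P(i)\to S(i)\to 0$, so $\rad P(i)=\Omega\,S(i)$. Thus $\rad P(i)$ is a syzygy by construction. To conclude that it lies in $\scmp\,B$ — i.e. that it is projectively Cohen–Macaulay — I would invoke the fact that over an Iwanaga–Gorenstein algebra of Gorenstein dimension one, every submodule of a projective module is Cohen–Macaulay (equivalently, $\Omega(\textup{mod}\,B)\subseteq\cmp\,B$; this is essentially Buchweitz's theorem together with $\textup{injdim}\,B_B\le 1$). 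Alternatively, one can argue directly: any self-extension or extension of $\rad P(i)$ with a projective would, via the short exact sequence above, force $\Ext^{\geq 2}_B(S(i),B)$ to be nonzero, contradicting Gorenstein dimension one after a dimension shift. Either way, $\rad P(i)\in\cmp\,B$.

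For indecomposability and non-projectivity, I would use the schurian property (Proposition~\ref{prop schurian}) and the explicit quiver structure of $Q$. Non-projectivity is immediate: $\rad P(i)$ is projective iff $P(i)$ has a projective radical, which by schurianness and the structure of $Q$ (every vertex lies on a chordless cycle, so every $P(i)$ has Loewy length $\ge 2$ and non-semisimple, non-projective socle behaviour) does not occur — more cleanly, $\rad P(i)\cong P(j)$ would give $S(i)$ of projective dimension one, but $B$ has no simple modules of finite projective dimension since $\scmp\,B$ contains all $\rad P(i)$ and $B$ is not hereditary. For indecomposability, the cleanest route is to show $\End_B(\rad P(i))$ is local. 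Using the schurian condition, $\dim\Hom(P(j),P(i))\le 1$ for all $j$, and one analyzes $\Hom_B(\rad P(i),\rad P(i))$ via the projective cover $P_0(\rad P(i))\to \rad P(i)$: the structure of $Q$ near vertex $i$ (Proposition~\ref{prop Q}: $i$ lies in exactly one or two chordless cycles) forces $\rad P(i)$ to have simple top, namely $\bigoplus_{\alpha:i\to j}S(j)$ is the top and in fact the analysis shows this top is either simple or, when it is a sum of two simples, the two "arms" of $\rad P(i)$ are glued along a common composition factor so no idempotent splits it. Concretely, I would show that a nontrivial idempotent $e\in\End(\rad P(i))$ would lift to an idempotent decomposition incompatible with the connectedness of the support of $\rad P(i)$ as a representation of $Q$ — and that support is connected because it is a union of (intervals along) the chordless cycles through $i$, which are themselves connected subquivers meeting at $i$'s neighbors.

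The main obstacle I expect is the indecomposability argument in the case where $i$ lies in \emph{two} chordless cycles, so that $\rad P(i)$ has two arrows out of the top and a priori could split as a direct sum of two uniserial-ish pieces. One must use the relations coming from the potential (the derivatives $\partial_\alpha W$) to show these two pieces share composition factors deeper down — this is exactly where the "tree" hypothesis on the dual graph and the schurian property do real work, guaranteeing the two branches reconverge rather than staying disjoint. I would handle this by explicitly tracking the radical series of $P(i)$ using the cycle-path combinatorics of Definition~\ref{def cycle path 2}, or by citing the explicit description of indecomposable projectives over dimer tree algebras from \cite{SS}. Once indecomposability is in hand, together with $\rad P(i)\in\cmp\,B$ and non-projectivity, the module lies in $\textup{ind}\,\scmp\,B$ as claimed.
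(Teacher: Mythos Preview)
Your overall shape is right, and the syzygy part via $\rad P(i)=\Omega\,S(i)$ is exactly what the paper does (the paper does not even argue Cohen--Macaulay-ness separately, since by the equivalence from \cite{GES} recalled in Section~\ref{sect CM}, an indecomposable non-projective syzygy is automatically in $\scmp\,B$). However, there are two concrete problems.

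First, your non-projectivity argument is circular: you write that $B$ has no simple of finite projective dimension ``since $\scmp\,B$ contains all $\rad P(i)$'', but that is precisely what the lemma is asserting. The paper's argument is much simpler and does not go through projective dimensions of simples at all: by condition (Q1) every arrow of $Q$ lies in a chordless cycle, hence in a relation coming from $\partial W$, and this immediately prevents $\rad P(i)$ from being projective.

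Second, and more seriously, you misread Proposition~\ref{prop Q}(3): it says every \emph{arrow} lies in at most two chordless cycles, not every \emph{vertex}. A vertex $i$ can lie in arbitrarily many chordless cycles, so there may be $k\ge 2$ arrows $i\to a_1,\ldots,i\to a_k$ and the top of $\rad P(i)$ is $S(a_1)\oplus\cdots\oplus S(a_k)$ with $k$ not bounded by $2$. Your ``two arms that reconverge'' picture is therefore incomplete. The paper's argument is the natural extension of your connected-support idea to general $k$: one orders the chordless cycles through $i$ so that consecutive ones share an arrow $b_j\to i$; then $\rad P(i)$ is supported on the walks $a_j\to\cdots\to b_j\leftarrow\cdots\leftarrow a_{j+1}$ for $j=1,\ldots,k-1$, which glues all $k$ summands of the top into a single indecomposable. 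No endomorphism-ring computation or cycle-path combinatorics is needed.
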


\begin{proof}
Since $\rad P(i) = \Omega \,S(i)$, we see that $\rad P(i)$ is a syzygy.  Note that it is nonzero, because every vertex lies in an oriented cordless cycle in $Q$ so there are no sinks in $Q$.  Moreover, since every arrow of $Q$ lies in a relation, it follows that $\rad P(i)$ is non-projective.  
To show that $\rad P(i)$ is indecomposable consider the top of $\rad P(i)$.  If the top is simple, then we are done.  Otherwise, the top is semi-simple and isomorphic to $S(a_1)\oplus \dots \oplus S(a_k)$ for some $k\geq 2$, where $i\to a_j \to \dots \to b_j \to i$  and $i \to a_{j+1} \to \dots \to b_j \to i$ are oriented chordless cycles in $Q$ that have an arrow in common.   Then the representation $\rad P(i)$ is supported on the path \[a_j\to \dots \to b_j \leftarrow \dots \leftarrow a_{j+1}\] for $j = 1, \dots, k-1$, which shows that it is indecomposable.    
\end{proof}

Now, we consider extensions in the module category between syzygies and radicals of projectives.

\begin{lemma}\label{lem:ext}
Let $M$ be an indecomposable non-projective syzygy in $\textup{mod}\,B$, and let $P_0$ be the projective cover of $M$.   Then the following are equivalent.

\begin{itemize}
\item[(a)] $\textup{Ext}^1_B(M, \rad P(x))\not=0$. 
\item[(b)] $P(x)$ is a direct summand of $P_0$. 
\end{itemize}
\end{lemma}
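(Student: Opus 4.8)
The plan is to compute $\Ext^1_B(M,\rad P(x))$ directly from the short exact sequence
\[0\to \rad P(x)\to P(x)\xrightarrow{\ \pi\ } S(x)\to 0.\]
Applying $\Hom_B(M,-)$ yields a long exact sequence containing the piece
\[\Hom_B(M,P(x))\xrightarrow{\ \pi_*\ }\Hom_B(M,S(x))\to \Ext^1_B(M,\rad P(x))\to \Ext^1_B(M,P(x)).\]
Since $M$ is an indecomposable non-projective syzygy, it is projectively Cohen-Macaulay (see Section~\ref{sect CM}), so $\Ext^1_B(M,P(x))=0$ because $P(x)$ is a direct summand of $B$. Hence $\Ext^1_B(M,\rad P(x))\cong\coker\pi_*$, and everything reduces to understanding the map $\pi_*$.

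Next I would identify $\Hom_B(M,S(x))$. Every homomorphism $M\to S(x)$ annihilates $\rad M$, so it factors through $\textup{top}\,M$; since $\textup{top}\,M$ is semisimple, $\dim\Hom_B(M,S(x))$ equals the multiplicity of $S(x)$ in $\textup{top}\,M$, which is exactly the multiplicity of $P(x)$ in the projective cover $P_0$ of $M$. In particular $\Hom_B(M,S(x))\ne 0$ if and only if $P(x)$ is a direct summand of $P_0$. This already gives the implication (a)$\Rightarrow$(b) by contraposition: if $P(x)$ is not a summand of $P_0$, then $\Hom_B(M,S(x))=0$, hence $\Ext^1_B(M,\rad P(x))=0$.

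For the converse (b)$\Rightarrow$(a) it suffices to show $\pi_*=0$, so that $\Ext^1_B(M,\rad P(x))\cong\Hom_B(M,S(x))\ne 0$. Suppose, for contradiction, that a nonzero $g\colon M\to S(x)$ factors as $g=\pi h$ for some $h\colon M\to P(x)$. Since $S(x)$ is simple, $g$ is surjective, so $\im h+\rad P(x)=P(x)$; as $P(x)$ has simple top, Nakayama's lemma forces $\im h=P(x)$, i.e. $h$ is onto. But $P(x)$ is projective, so $h$ splits and $P(x)$ is a direct summand of $M$; since $M$ is indecomposable this gives $M\cong P(x)$, contradicting that $M$ is non-projective. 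Thus no nonzero element of $\Hom_B(M,S(x))$ lies in the image of $\pi_*$, so $\pi_*=0$.

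The argument is entirely elementary; the one delicate point is the last step, where the simple-top structure of $P(x)$ together with the indecomposability and non-projectivity of $M$ is what rules out any lift along $\pi$ of a map onto $S(x)$. I do not expect to need the $3$-Calabi-Yau structure of $\scmp\,B$ for this lemma, only the Cohen-Macaulay property of $M$ (to kill $\Ext^1_B(M,P(x))$).
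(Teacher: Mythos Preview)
Your argument is correct, and it is somewhat more direct than the paper's proof. You apply $\Hom_B(M,-)$ to the short exact sequence $0\to\rad P(x)\to P(x)\to S(x)\to 0$, use the Cohen--Macaulay property of $M$ to kill $\Ext^1_B(M,P(x))$, and thereby identify $\Ext^1_B(M,\rad P(x))$ with the cokernel of $\pi_*\colon\Hom_B(M,P(x))\to\Hom_B(M,S(x))$; both implications then become statements about this single map. The paper instead treats the two directions by separate devices: for (a)$\Rightarrow$(b) it works with the projective resolution of $M$ and a lifting argument from $P_1$ to $P(x)$, and for (b)$\Rightarrow$(a) it builds an explicit extension as the pullback of $P(x)\to S(x)$ along $M\to S(x)$ and argues via Krull--Schmidt that it cannot split. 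Your observation that $\pi_*=0$ (because any $h\colon M\to P(x)$ with $\pi h\ne0$ would be a split surjection onto $P(x)$) subsumes the paper's pullback analysis and makes (a)$\Rightarrow$(b) a one-line contrapositive. The tradeoff is minimal: the paper's argument exhibits the extension concretely, while yours is shorter and makes the role of the Cohen--Macaulay hypothesis more transparent.
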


\begin{proof}
Suppose (a). Consider the following diagram where the first row is a minimal projective resolution of $M$ in $\text{mod}\,B$. 
\[\xymatrix{\dots \ar[r]& P_2\ar[r]^{p_2} & P_1\ar[r]^{p_1}\ar[d]^{g}&P_0\ar[r]^{p_0}\ar@{-->}[ddl]^{h}& M \ar[r] & 0\\
&&\rad P(x) \ar[d]^{i}\\
&&P(x)
}\]
Let the map $g: P_1\to \rad P(x)$ induce a nonzero element of $\textup{Ext}^1_B(M, \rad P(x))$.    Then $gp_2=0$ and $g$ does not factor though the map $p_1$.  Let $i:\rad P(x)\to P(x)$ be the inclusion morphism of the radical into its corresponding projective module.  Since $M$ is a syzygy then $\textup{Ext}^1_B(M,  P(x))=0$.  In particular, the equivalence class of the map $ig$ is zero in $\textup{Ext}^1_B(M,  P(x))$, and so $igp_2\not=0$ or $ig$ factors through $p_1$.  Since $gp_2=0$, we must have that $ig$ factors through $p_1$.  Thus, there exists $h: P_0\to P(x)$ such that 
\[hp_1=ig.\]   
Now, suppose to the contrary that $P(x)$ is not a direct summand of $P_0$.  Then $h$ factors through $i$, so there exists $h':P_0\to \rad P(x)$ such that $ih'=h$.  Therefore, 
\[ig=hp_1=ih'p_1.\]  
Because $i$ is injective we conclude that $g=h'p_1$.  Therefore, $g$ factors through $p_1$ which is a contradiction to it being nonzero in $\textup{Ext}^1_B(M, \rad P(x))$.  This shows that (a) implies (b).

Now suppose (b).  Consider the following diagram. 
\[
\xymatrix{&&\text{ker}\,\pi'\ar[r] \ar@{^{(}->}[d]&\text{ker}\,\pi \ar@{^{(}->}[d]\\
0\ar[r] &\rad P(x) \ar[r] \ar@{=}[d]& X \ar[r] \ar@{->>}[d]^{\pi'}& M \ar[r]\ar@{->>}[d]^{\pi} & 0\\
0\ar[r]&\rad P(x)\ar[r] & P(x)\ar[r]^{p} & S(x)\ar[r] & 0
}
\]
The bottom row of the diagram is a short exact sequence ending in the simple module $S(x)$.   Since $P(x)$ is a direct summand of the projective cover of $M$, then $S(x)$ appears in the top of $M$.  Let $\pi: M\to S(x)$ be the corresponding projection.  The pullback of $S(x)$ along $\pi$ and $p$ yields a short exact sequence appearing in the second row of the diagram.   If this sequence is nonsplit, then we obtain a nonzero element of $\text{Ext}^1_B(M, \rad P(x))$ and we are done.  Otherwise, we have that $X\cong \rad P(x)\oplus M$. 

Since $\pi$ is surjective, then by the snake lemma $\pi'$ is also surjective and $\text{ker}\,\pi'\cong \text{ker}\,\pi$.  Moreover, we obtain another short exact sequence appearing in the second column of the diagram which ends in the projective $P(x)$.  This sequence must split and we conclude $X \cong \text{ker}\,\pi \oplus P(x)$.  

Hence, $\text{ker}\,\pi \oplus P(x) \cong \rad P(x)\oplus M$, and we have that $P(x)$ is a direct summand of $M$ or $\rad P(x)$. The former is not possible because $M$ is non-projective and indecomposable by assumption, and the latter is not possible because the algebra $B$ is finite dimensional.  This shows that (b) implies (a).
\end{proof}

Next, we show that the previous result also holds for extensions in the syzygy category, but first we need the following lemma. 

\begin{lemma}\label{lem:MN}
Let $M, N \in \scmp\,B$. Then $\textup{Ext}^1_B(M, N) = 0$ if and only if $\textup{Ext}^1_{\scmp\,B}(M,N)=0$.
\end{lemma}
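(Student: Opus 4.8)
The proof is purely formal and rests on two observations: that $\cmp\,B$ is closed under extensions inside $\textup{mod}\,B$, and the standard translation between $\Ext^1$ in a Frobenius category and morphisms in its stable category. Here $\Ext^1_{\scmp\,B}(M,N)$ is understood in the usual sense for a triangulated category, namely $\Hom_{\scmp\,B}(M,N[1])$, where $[1]$ is the shift of $\scmp\,B$; recall from Section~\ref{sect CM} that $[-1]=\zO$, so $N[1]=\zO^{-1}N$.

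First I would record that $\cmp\,B$ is extension-closed in $\textup{mod}\,B$. Indeed, given a short exact sequence $0\to N\to E\to M\to 0$ in $\textup{mod}\,B$ with $M,N\in\cmp\,B$, applying $\Hom_B(-,B)$ produces a long exact sequence in which $\Ext^i_B(M,B)$ and $\Ext^i_B(N,B)$ vanish for all $i>0$, forcing $\Ext^i_B(E,B)=0$ for $i>0$, i.e. $E\in\cmp\,B$. Hence any extension of $M$ by $N$ in $\textup{mod}\,B$ already lives in the Frobenius exact subcategory $\cmp\,B$, and the Yoneda group $\Ext^1_B(M,N)$ computed in $\textup{mod}\,B$ coincides with the group of equivalence classes of conflations $0\to N\to E\to M\to 0$ in $\cmp\,B$.

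Second, I would invoke the standard description of the stable category of a Frobenius category (see e.g. Happel or Keller): for a Frobenius category $\cale$ with stable category $\underline{\cale}$, sending a conflation to its connecting morphism gives a natural isomorphism between the group of conflations $0\to N\to E\to M\to 0$ and $\Hom_{\underline{\cale}}(M,N[1])$; moreover a conflation splits precisely when its class, equivalently its connecting map, is zero. Applied to $\cale=\cmp\,B$, whose stable category is $\scmp\,B$ with shift $\zO^{-1}$, this yields a natural isomorphism
\[\Ext^1_B(M,N)\;\cong\;\Hom_{\scmp\,B}(M,\zO^{-1}N)\;=\;\Ext^1_{\scmp\,B}(M,N).\]
In particular one side vanishes if and only if the other does, which is the assertion of the lemma (and in fact proves the stronger statement that the two groups are isomorphic). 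If one prefers to avoid Yoneda-group language, the same argument can be phrased directly: a short exact sequence $0\to N\to E\to M\to 0$ in $\textup{mod}\,B$ is, by the first step, a conflation in $\cmp\,B$ and hence induces a triangle $N\to E\to M\xrightarrow{f}\zO^{-1}N$ in $\scmp\,B$ with $f=0$ iff the sequence splits; conversely every morphism $M\to\zO^{-1}N$ in $\scmp\,B$ arises from such a triangle, hence from such a short exact sequence. The only step that is not completely formal is the extension-closedness of $\cmp\,B$, and that is immediate from the long exact sequence for $\Ext^\ast_B(-,B)$; I do not anticipate any real obstacle.
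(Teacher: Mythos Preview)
Your proof is correct and in fact establishes more than the paper does: you obtain the natural isomorphism $\Ext^1_B(M,N)\cong\Ext^1_{\scmp\,B}(M,N)$ rather than merely the equivalence of vanishing. The route, however, is genuinely different from the paper's.

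The paper argues the two directions separately and by hand. For the forward direction it simply notes that triangles in $\scmp\,B$ lift to short exact sequences in $\textup{mod}\,B$. For the backward direction it takes the short exact sequence $0\to N\to P\to \zO^{-1}N\to 0$ with $P$ projective, applies $\Hom_B(M,-)$, and uses $\Ext^1_B(M,P)=0$ to deduce that a nonzero class in $\Ext^1_B(M,N)$ forces the existence of a map $M\to\zO^{-1}N$ not factoring through $P$, hence a nonzero element of $\underline{\Hom}_B(M,\zO^{-1}N)=\Ext^1_{\scmp\,B}(M,N)$. Your argument instead packages both directions into one stroke by first observing that $\cmp\,B$ is extension-closed (so Yoneda $\Ext^1$ in $\textup{mod}\,B$ agrees with the conflation group in $\cmp\,B$) and then invoking the standard identification of conflations in a Frobenius category with stable morphisms into the shift. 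What you gain is a cleaner, more conceptual proof and a sharper conclusion; what the paper's approach gains is self-containment, avoiding an appeal to the general Frobenius machinery.
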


\begin{proof}
The forward direction follows because every triangle in  $\scmp\,B$ lifts to a short exact sequence in $\text{mod}\,B$.   It remains to show the backward direction, so we suppose that  $\textup{Ext}^1_{B}(M,N)\not=0$.  Since $N\in \scmp\,B$, let $X = \Omega^{-1}N$ be its shift.  Hence, we have $\Omega X = N$ in $\scmp\,B$, and since $\Omega$ is the same in $\scmp\,B$ and $\text{mod}\,B$ we get a short exact sequence with $P$ a projective cover of $X$.
\[0\to N \to P \to X \to 0\]
Applying $\text{Hom}_B(M, - )$ to this sequence, we obtain
\[\text{Hom}_B(M,P)\to\text{Hom}_B(M,X)\to \text{Ext}^1_B(M,N)\to \text{Ext}^1_B(M,P).\]
 The last term above is zero because $M\in \scmp\,B$.  By assumption $\textup{Ext}^1_{B}(M,N)\not=0$, which implies that the map $\text{Hom}_B(M,P)\to\text{Hom}_B(M,X)$ is not surjective.  Since $P$ is the projective cover of $X$, this implies that not every morphism from $M$ to $X$ factors through a projective module.  Thus the lemma follows from the equation below.
\[0 \not= \underline{\text{Hom}}_B(M,X) \cong \text{Hom}_{\scmp\,B}(M,X)\cong \text{Ext}^1_{\scmp\,B} (M, \Omega X) = \text{Ext}^1_{\scmp\,B}(M, N).\]
\end{proof}

\begin{prop}\label{prop:ext}
Let $M$ be an indecomposable non-projective syzygy in $\textup{mod}\,B$, and let
\[\xymatrix{P_1\ar[r]&P_0\ar[r] & M\ar[r] & 0}\]
be the minimal projective resolution of $M$ in $\textup{mod}\,B$. Then 

\begin{itemize}
\item[(i)] $\textup{Ext}^1_{\scmp\,B}(M, \rad P(x))\not=0$ if and only if $P(x)\in\textup{add}\,P_0$.  
\item[(ii)] $\textup{Ext}^2_{\scmp\,B}(M, \rad P(x))\not=0$ if and only if $P(x)\in\textup{add}\,P_1$. 
\end{itemize}
\end{prop}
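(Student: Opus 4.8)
The plan is to reduce both statements to the module-category results already established, namely Lemma~\ref{lem:ext}, together with Lemma~\ref{lem:MN} (which lets one pass between $\textup{Ext}^1$ computed in $\textup{mod}\,B$ and in $\scmp\,B$) and the triangulated structure of $\scmp\,B$ (whose inverse shift is $\Omega$). For part (i) this is immediate: $M$ lies in $\scmp\,B$ by hypothesis and $\rad P(x)$ lies in $\scmp\,B$ by Lemma~\ref{lem:rad}, so Lemma~\ref{lem:MN} gives $\textup{Ext}^1_{\scmp\,B}(M,\rad P(x))\ne 0$ if and only if $\textup{Ext}^1_B(M,\rad P(x))\ne 0$, and Lemma~\ref{lem:ext} identifies the latter with the condition $P(x)\in\add\,P_0$, since $P_0$ is the projective cover of $M$.

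For part (ii), I would first use the triangulated structure to dimension-shift: since $\Omega$ is the inverse shift of $\scmp\,B$, we have $M[-1]=\Omega M$, hence $\textup{Ext}^2_{\scmp\,B}(M,\rad P(x))=\textup{Ext}^1_{\scmp\,B}(M[-1],\rad P(x))\cong\textup{Ext}^1_{\scmp\,B}(\Omega M,\rad P(x))$. Then, exactly as in part (i), Lemma~\ref{lem:MN} reduces this to asking whether $\textup{Ext}^1_B(\Omega M,\rad P(x))\ne 0$, and Lemma~\ref{lem:ext} applied to $\Omega M$ should turn this into the condition $P(x)\in\add(\text{projective cover of }\Omega M)$. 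Finally, truncating the minimal projective resolution $\cdots\to P_2\to P_1\to P_0\to M\to 0$ at $P_0$ shows that $\cdots\to P_2\to P_1\to\Omega M\to 0$ is the minimal projective resolution of $\Omega M$ (minimality of the $P_i$ passes down), so its projective cover is $P_1$, giving $P(x)\in\add\,P_1$ as desired.

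The step requiring care — the main obstacle — is justifying that Lemma~\ref{lem:ext} actually applies to $\Omega M$, i.e.\ that $\Omega M$ is again an \emph{indecomposable non-projective syzygy}. It is a syzygy by construction. For non-projectivity and indecomposability I would use that $\cmp\,B$ is a Frobenius category whose projective--injective objects are exactly the projective $B$-modules: hence the $\textup{mod}\,B$-projective cover $P_0\to M$ is also a projective cover in $\cmp\,B$, and a projective direct summand of $\Omega M=\ker(P_0\to M)$ would split off $P_0$ as an injective object, contradicting minimality of the projective cover of $M$; so $\Omega M$ has no projective summands and is a nonzero object of $\scmp\,B$. Since $\Omega$ is a triangle autoequivalence of $\scmp\,B$ and $M$ is indecomposable in $\scmp\,B$, so is $\Omega M$; combined with the absence of projective summands this forces $\Omega M$ to be indecomposable as a $B$-module. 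One could instead bypass the indecomposability argument by decomposing $\Omega M=\bigoplus_k N_k$ into indecomposables, applying Lemma~\ref{lem:ext} to each $N_k$ (each is a non-projective syzygy, as $\cmp\,B$ is closed under summands and $\Omega M$ has no projective summand), and observing that $P_1=\bigoplus_k(\text{projective cover of }N_k)$; once this bookkeeping is in place the chain of equivalences above completes the proof.
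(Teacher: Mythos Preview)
Your proof is correct and follows essentially the same approach as the paper: part~(i) combines Lemmas~\ref{lem:ext} and~\ref{lem:MN} directly, and part~(ii) reduces to part~(i) for $\Omega M$ via the identification $\textup{Ext}^2_{\scmp\,B}(M,\rad P(x))\cong\textup{Ext}^1_{\scmp\,B}(\Omega M,\rad P(x))$. Your careful verification that $\Omega M$ is again an indecomposable non-projective syzygy (so that Lemma~\ref{lem:ext} indeed applies) is a point the paper leaves implicit.
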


\begin{proof}
Part (i) follows directly from Lemmas~\ref{lem:ext} and ~\ref{lem:MN}. Part (ii) follows from (i) by replacing $M$ with $\Omega\, M$ since $\textup{Ext}^2_{\scmp\,B}(M, \rad P(x))\cong \textup{Ext}^1_{\scmp\,B}(\Omega\, M, \rad P(x))$. 
\end{proof}




As an immediate corollary, we obtain an explicit description of the projective presentation of $\rad P(i)$. 

\begin{corollary}\label{cor:rad}
The minimal projective presentation of the radical $\rad P(x)$ in $\textup{mod}\,B$ is 
\[\bigoplus_{j\to x \in Q_1} P(j)\to \bigoplus_{x\to i \in Q_1} P(i) \to \rad P(x) \to 0.
\]
\end{corollary}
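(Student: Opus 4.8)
The plan is to exploit that $\rad P(x)=\Omega\,S(x)$, so that a minimal projective presentation of $\rad P(x)$ is obtained by truncating a minimal projective resolution $\cdots\to P_2\xrightarrow{d_2}P_1\xrightarrow{d_1}P_0\to S(x)\to 0$ of the simple module: it is $P_2\to P_1\to \rad P(x)\to 0$ with $\rad P(x)=\coker d_2$. The zeroth and first terms are elementary: $P_0=P(x)$, and since the first radical layer $\textup{top}(\rad P(x))=\rad P(x)/\rad^2 P(x)$ is $\bigoplus_{x\to i\in Q_1}S(i)$ (using that $Q$ has no parallel arrows, Proposition~\ref{prop Q}), we get $P_1=\bigoplus_{x\to i\in Q_1}P(i)$. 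So the whole statement reduces to identifying the projective cover $P_2$ of $\Omega^2 S(x)$, equivalently $\textup{add}\,P_2$ together with the fact that each constituent occurs once.

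For this I would apply Proposition~\ref{prop:ext} with $M=\rad P(x)$, which is legitimate since $\rad P(x)$ is an indecomposable non-projective syzygy by Lemma~\ref{lem:rad}. Part (ii) gives $P(j)\in\textup{add}\,P_2$ if and only if $\textup{Ext}^2_{\scmp\,B}(\rad P(x),\rad P(j))\neq 0$. Now I would invoke that $\scmp\,B$ is $3$-Calabi--Yau (Keller--Reiten, \cite{KR}), whose shift is the inverse syzygy $\Omega^{-1}$, to rewrite $\textup{Ext}^2_{\scmp\,B}(\rad P(x),\rad P(j))\cong D\,\textup{Ext}^1_{\scmp\,B}(\rad P(j),\rad P(x))$; since $\rad P(j)$ is again an indecomposable non-projective syzygy, Proposition~\ref{prop:ext}(i) applies to it and says this is nonzero exactly when $P(x)$ is a summand of the projective cover of $\rad P(j)$, which by the computation of the first term above (applied to $j$) is $\bigoplus_{j\to k\in Q_1}P(k)$. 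Hence $P(j)\in\textup{add}\,P_2$ if and only if there is an arrow $j\to x$ in $Q_1$.

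It remains to see the multiplicities are all $1$. The multiplicity of $P(j)$ in $P_2$ equals the $S(j)$-multiplicity of $\textup{top}(\Omega^2 S(x))$, i.e. $\dim\textup{Ext}^2_B(S(x),S(j))$, the number of relations from $x$ to $j$ in a minimal generating set of the relation ideal. Since $B=\textup{Jac}(Q,W)$ with $W$ the signed sum of the chordless cycles, and every arrow lies in at most two chordless cycles while $Q$ has no parallel arrows (Proposition~\ref{prop Q}), every Jacobian relation is a single path or a difference of two parallel paths, and there is at most one minimal relation between any ordered pair of vertices; so the multiplicity is at most $1$, and combined with the previous paragraph this yields $P_2=\bigoplus_{j\to x\in Q_1}P(j)$ and the stated presentation. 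The only delicate points I anticipate are the degree bookkeeping in the $3$-Calabi--Yau duality (remembering that the shift in $\scmp\,B$ is the \emph{inverse} of $\Omega$, so that $\textup{Ext}^2$ pairs with $\textup{Ext}^1$ and not $\textup{Ext}^{1}$ pairs with $\textup{Ext}^{2}$ in the wrong direction) and, if one wants a fully self-contained multiplicity count, checking that the path completing a chordless cycle is not already annihilated in $B$ by a shorter relation, which rests on the facts that two chordless cycles share at most one arrow and that $Q$ has no $2$-cycles (Proposition~\ref{prop Q}).
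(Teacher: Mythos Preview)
Your proof is correct and follows essentially the same route as the paper: both use Proposition~\ref{prop:ext} together with the $3$-Calabi--Yau duality of $\scmp\,B$ to identify the second term. The only difference is that where the paper invokes the external reference \cite[Proposition~3.32]{SS} (arrows $j\to x$ correspond to nonvanishing $\Ext^1_B(\rad P(j),\rad P(x))$), you instead bootstrap by applying Proposition~\ref{prop:ext}(i) to $\rad P(j)$ and use the already-computed first term for $j$; this is a mild self-containment gain. Your explicit multiplicity check is a reasonable addition that the paper leaves implicit (it follows immediately from schurianness, Proposition~\ref{prop schurian}).
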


\begin{proof}
It is easy to see that the projective cover of $\rad P(x)$ is as in the statement.  Next, we consider the second term in the projective resolution of $\rad P(x)$.  By \cite[Proposition 3.32]{SS} there exists an arrow $j\to x$ in the quiver $Q$ if and only if $\text{Ext}^1_{B}(\rad P(j), \rad P(x))\not=0$.  By Lemma~\ref{lem:ext} and Proposition~\ref{prop:ext}(i), this is equivalent to the statement that 
\[\text{Ext}^1_{\scmp\,B}(\rad P(j), \rad P(x))\not=0.\]  
Then 
\[\text{Ext}^1_{\scmp\,B}(\rad P(j), \rad P(x))\cong D\text{Ext}^2_{\scmp\,B}(\rad P(x), \rad P(j))\]
by the 3-Calabi-Yau property of $\scmp\,B$. Then Proposition~\ref{prop:ext}(ii) implies that the last term is nonzero if and only if $P(j)$ is a summand of the second term in the projective presentation of $\rad P(x)$.  This yields the desired result. 
\end{proof}

\subsection{Arrows and extensions}

In this section we study the connection between arrows in the quiver and extensions between the corresponding radicals. 

\begin{lemma}\label{lem:arrow}
If $i\to j$ is an arrow in $Q$, then $\textup{Ext}^1_{\scmp B}(\Omega \,\rad P(i), \rad P(j))=0$.
\end{lemma}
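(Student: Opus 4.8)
The plan is to recognize the group $\Ext^1_{\scmp B}(\Omega\,\rad P(i),\rad P(j))$ as detecting a direct summand of a term in the minimal projective resolution of $\rad P(i)$, and then to reduce the asserted vanishing to the fact that $Q$ has no $2$-cycles. First I would invoke Lemma~\ref{lem:rad} to know that $\rad P(i)$ is an indecomposable non-projective syzygy, so that Corollary~\ref{cor:rad} applies and yields the minimal projective presentation
\[
\bigoplus_{k\to i\in Q_1}P(k)\ \longrightarrow\ \bigoplus_{i\to l\in Q_1}P(l)\ \longrightarrow\ \rad P(i)\ \longrightarrow\ 0 .
\]
In particular, the second term $P_1$ of the minimal projective resolution of $\rad P(i)$ is $\bigoplus_{k\to i\in Q_1}P(k)$.

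Next I would use that $\scmp B$ is triangulated with inverse shift $\Omega$, so that there is a natural isomorphism $\Ext^1_{\scmp B}(\Omega\,\rad P(i),\rad P(j))\cong\Ext^2_{\scmp B}(\rad P(i),\rad P(j))$ — this is exactly the dimension shift already used in the proof of Proposition~\ref{prop:ext}(ii). Applying Proposition~\ref{prop:ext}(ii) to $M=\rad P(i)$, the right-hand group is nonzero if and only if $P(j)\in\add P_1=\add\bigoplus_{k\to i\in Q_1}P(k)$. Since $Q$ has no parallel arrows (Proposition~\ref{prop Q}(1)), the module $P(j)$ is a direct summand of $\bigoplus_{k\to i}P(k)$ precisely when there is an arrow $j\to i$ in $Q$.

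Finally, since $i\to j$ is an arrow of $Q$ and $Q$ contains no $2$-cycles by Definition~\ref{def Q}, there is no arrow $j\to i$; hence $P(j)\notin\add P_1$ and therefore $\Ext^1_{\scmp B}(\Omega\,\rad P(i),\rad P(j))=0$. I do not expect a genuine obstacle here: the only points requiring care are the bookkeeping of which projective term ($P_0$ versus $P_1$, equivalently $\Ext^1$ versus $\Ext^2$) is the relevant one, and checking that Corollary~\ref{cor:rad} and Proposition~\ref{prop:ext}(ii) legitimately apply to $\rad P(i)$, both of which are guaranteed by Lemma~\ref{lem:rad}.
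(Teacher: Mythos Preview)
Your proof is correct and there is no circularity: Corollary~\ref{cor:rad} and Proposition~\ref{prop:ext} are established before Lemma~\ref{lem:arrow} and do not rely on it, so invoking them is legitimate.

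Your route is genuinely shorter than the paper's. Both arguments rest on the same two ingredients, Corollary~\ref{cor:rad} (which identifies $P_1=\bigoplus_{k\to i}P(k)$) and the absence of $2$-cycles in $Q$ (which guarantees $P(j)\notin\add P_1$). The difference is in how the conclusion is drawn from these ingredients. You work entirely in $\scmp B$: the dimension shift $\Ext^1_{\scmp B}(\Omega\,\rad P(i),\rad P(j))\cong\Ext^2_{\scmp B}(\rad P(i),\rad P(j))$ together with Proposition~\ref{prop:ext}(ii) immediately gives the vanishing. The paper instead descends to $\textup{mod}\,B$: it first shows $\Ext^1_B(\rad P(i),S(j))=0$ from the shape of $P_1$, deduces $\Hom_B(\Omega\,\rad P(i),S(j))=0$ via a long exact sequence, then uses the short exact sequence $0\to\rad P(j)\to P(j)\to S(j)\to 0$ to obtain $\Ext^1_B(\Omega\,\rad P(i),\rad P(j))=0$, and finally applies Lemma~\ref{lem:MN} to pass to $\scmp B$. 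In effect the paper unpacks by hand the content of Proposition~\ref{prop:ext}(ii) in this particular case; your argument simply cites that proposition, which is cleaner.
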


\begin{proof}
First we claim that 
\begin{equation} \label{eq:b}
\text{Hom}_B(\Omega \,\rad P(i), S(j))=0.
\end{equation}   
To show the claim consider a short exact sequence in $\text{mod}\,B$
\[ 0\to \Omega \,\rad P(i) \to P \to \rad P(i)\to 0.\]
Apply $\text{Hom}_B(-, S(j))$ to obtain the exact sequence 

\begin{multline*} 
0\to \Hom_B(\rad P(i), S(j))\to \Hom_B(P,S(j))\to \Hom_B(\Omega \,\rad P(i), S(j))\to \\
\to \Ext_B^1(\rad P(i), S(j)) \to \Ext_B^1(P, S(j))
\end{multline*}

The last term is zero since $P$ is projective.  Since there is exactly one arrow $i\to j$ then both vector spaces $\Hom_B(\rad P(i), S(j))$ and $\Hom_B(P,S(j))$ are 1-dimensional.  Therefore, we conclude that 
\begin{equation}\label{eq:a}
\Hom_B(\Omega \,\rad P(i), S(j)) \cong \Ext_B^1(\rad P(i), S(j)).
\end{equation}

By Corollary~\ref{cor:rad}, the second term in the projective resolution of $\rad P(i)$ is $\oplus P(x)$ where the sum runs over all arrows $x\to i$ in the quiver $Q$. Since by assumption there is an arrow $i\to j$ in $Q$, then there is no arrow $j\to i$, and so $P(j)$ is not a summand of the second term in the projective resolution of $\rad P(i)$.  This implies that $\Ext^1_B(\rad P(i), S(j))=0$ and completes the proof of \eqref{eq:b}.

Now, we want to show the following
\begin{equation}\label{eq:c}
\Ext^1_B(\Omega \,\rad P(i), \rad P(j))=0.
\end{equation}
Applying $\Hom_B(\Omega \,\rad P(i), -)$ to the short exact sequence 
\[ 0 \to \rad P(j)\to P(j) \to S(j)\to 0,\]
we obtain 
\[ \Hom_B(\Omega \,\rad P(i), S(j))\to \Ext^1_B(\Omega \,\rad P(i), \rad P(j))\to \Ext^1_B(\Omega \,\rad P(i), P(j)).\]
The first term above is zero by \eqref{eq:b} and the last term is zero because $\Omega \,\rad P(i)\in \scmp\,B$.  This shows \eqref{eq:c}, and the conclusion follows from Lemma~\ref{lem:MN}.
\end{proof}

\begin{prop}\label{prop:barrow} 
If $i\to j$ is a boundary arrow in $Q$, then 
\[\textup{Ext}^1_{\scmp B}(\Omega \,\rad P(i), \rad P(j)) =\textup{Ext}^1_{\scmp B}(\rad P(j), \Omega \,\rad P(i))  =0.\]
\end{prop}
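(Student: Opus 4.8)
The first equality is immediate: since $i\to j$ is in particular an arrow of $Q$, Lemma~\ref{lem:arrow} gives $\textup{Ext}^1_{\scmp B}(\Omega\,\rad P(i),\rad P(j))=0$ with no further hypothesis. So all the content lies in the vanishing of $\textup{Ext}^1_{\scmp B}(\rad P(j),\Omega\,\rad P(i))$, and the boundary hypothesis must be used only there.

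For that second equality the plan is to first move the computation into $\textup{mod}\,B$. Since $\scmp\,B$ is triangulated with inverse shift $\Omega$ and is $3$-Calabi--Yau,
\[\textup{Ext}^1_{\scmp B}(\rad P(j),\Omega\,\rad P(i))\;\cong\; D\,\textup{Ext}^2_{\scmp B}(\Omega\,\rad P(i),\rad P(j)).\]
Applying Proposition~\ref{prop:ext}(ii) to the indecomposable non-projective syzygy $M=\Omega\,\rad P(i)$, the right-hand side is nonzero if and only if $P(j)\in\add P_1$, where $P_1\to P_0\to\Omega\,\rad P(i)\to 0$ is the minimal projective presentation. Splicing this with the resolution of $\rad P(i)$ (whose first two projectives are $R_0=\bigoplus_{i\to a}P(a)$ and $R_1=\bigoplus_{b\to i}P(b)$ by Corollary~\ref{cor:rad}), $P_1$ is exactly the third projective $R_2$ in the minimal projective resolution $R_2\to R_1\to R_0\to\rad P(i)\to 0$. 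Using Proposition~\ref{prop:ext}(i) for $\Omega^2\,\rad P(i)$ together with the $3$-Calabi--Yau property once more, one sees that $P(j)\in\add R_2$ is equivalent to $\underline{\textup{Hom}}_B(\rad P(j),\rad P(i))\neq 0$. Thus the proposition reduces to the statement
\[(\star)\qquad \underline{\textup{Hom}}_B(\rad P(j),\rad P(i))=0\quad\text{whenever }i\to j\text{ is a boundary arrow.}\]

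To prove $(\star)$ I would argue directly in $\textup{mod}\,B$. Because the arrow $i\to j$ exists there is no arrow $j\to i$, so $S(i)$ is not a composition factor of $\textup{top}\,\rad P(j)$ and $\textup{Hom}_B(\rad P(j),S(i))=0$; applying $\textup{Hom}_B(\rad P(j),-)$ to $0\to\rad P(i)\to P(i)\to S(i)\to 0$ then yields $\textup{Hom}_B(\rad P(j),\rad P(i))\cong\textup{Hom}_B(\rad P(j),P(i))$, so it suffices to show every homomorphism $\rad P(j)\to P(i)$ factors through a projective module. Here I would invoke the explicit structure of the radicals of a dimer tree algebra, as in the proof of Lemma~\ref{lem:rad}: $\rad P(i)$ and $\rad P(j)$ are the string-like modules read off from the chordless cycles through $i$ and through $j$, the unique nonzero path from $i$ to $j$ in $B$ is the boundary arrow $\alpha$ itself, and — since $\alpha$ lies in a single chordless cycle $C$ — the complementary path from $j$ to $i$ along $C$ equals $\partial_\alpha W$ and hence is zero in $B$. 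Tracking where $\textup{top}\,\rad P(j)$ can map inside $P(i)$, these vanishing relations force any homomorphism $\rad P(j)\to P(i)$ through the composite $\rad P(j)\hookrightarrow P(j)\to P(i)$ (or through the projective cover $\bigoplus_{j\to b}P(b)$ of $\rad P(j)$), which is a factorization through a projective.

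The main obstacle is exactly this last step. The delicate point is that a homomorphism $\rad P(j)\to P(i)$ need not extend to $P(j)$ — when several chordless cycles pass through $j$ the group $\textup{Ext}^1_B(S(j),\rad P(i))$ can be nonzero — so one must genuinely show that any such map lands in the submodule of $P(i)$ generated by $\alpha$; controlling this requires the combinatorial description of the radical modules together with a case analysis organized by the chordless cycles meeting $i$ and $j$, in which the boundary-arrow hypothesis is precisely what makes the relevant turning paths in $B$ vanish. Equivalently, one can bypass the Hom computation by determining the third term $R_2$ of the (periodic) minimal projective resolution of $\rad P(i)$ explicitly and checking that $P(j)$ is not a summand of its projective cover.
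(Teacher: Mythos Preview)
Your proposal is correct and lands on exactly the same reduction as the paper: the first vanishing is Lemma~\ref{lem:arrow}, and the second is equivalent to $(\star)$, namely $\underline{\Hom}_B(\rad P(j),\rad P(i))=0$. Two remarks on how your route compares with the paper's.

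First, your reduction to $(\star)$ via the 3-Calabi--Yau duality and Proposition~\ref{prop:ext} is valid but roundabout. Since $\Omega$ is the inverse shift in $\scmp\,B$, one has directly
\[
\textup{Ext}^1_{\scmp B}(\rad P(j),\Omega\,\rad P(i))\cong \Hom_{\scmp B}(\rad P(j),\rad P(i))\cong\underline{\Hom}_B(\rad P(j),\rad P(i)),
\]
which is the one-line reduction the paper uses; your detour through $R_2$ is unnecessary (though not wrong).

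Second, for $(\star)$ itself your sketch has the right setup but misses the mechanism the paper actually exploits. The paper writes out $\rad P(j)$ and $\rad P(i)$ explicitly: the top of $\rad P(j)$ is $\bigoplus_r S(a_r)$, and below $j$ in $\rad P(i)$ the \emph{same} vertices $a_r$ appear, linked pairwise through common socle vertices $b_r$. The crucial observation is a \emph{connectivity} argument: if a morphism $f\colon\rad P(j)\to\rad P(i)$ is nonzero at one $a_r$, then compatibility along the zigzag $a_r\to\cdots\to b_r\leftarrow\cdots\leftarrow a_{r\pm1}$ (present in both modules) forces it to be nonzero --- indeed the same scalar --- at every $a_s$. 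Hence $f$ acts as a scalar on the entire top of $\rad P(j)$, and such a map visibly factors as $\rad P(j)\hookrightarrow P(j)\to\rad P(i)$. This is where the boundary hypothesis enters: it pins down the shape of $\rad P(i)$ near $j$ so that these zigzags in $\rad P(j)$ have matching counterparts in $\rad P(i)$. Your sketch instead emphasizes the vanishing of $\partial_\alpha W$ and tracking the top of $\rad P(j)$ into $P(i)$; that relation is relevant, but it does not by itself rule out a morphism that is nonzero on some $a_r$'s and zero on others. The connectivity argument is what closes the case, and it replaces the ``case analysis'' you anticipate with a single propagation step.
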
 

\begin{proof}
By Lemma~\ref{lem:arrow} it suffices to show that $\textup{Ext}^1_{\scmp B}(\rad P(j), \Omega \,\rad P(i))  =0$. This is equivalent to \[\underline{\text{Hom}}_B(\rad P(j), \rad P(i))=0\] which we show below 
using the particular structure of these quiver representations. 

Consider the local configuration around vertex $j$ shown below.  Here each dotted arrow is either a single arrow or a sequence of arrows such that each bounded region is an oriented chordless cycle in $Q$.  The arrow $i\to j$ is a boundary arrow as in the statement of the proposition, and, without loss of generality, we may assume that the other boundary arrow $j\to a_k$ at vertex $j$ is oriented away from $j$.  The other case follows similarly. 

\[
\xymatrix{a_k\ar@/^5pt/@{..>}[r]&b_k\ar[d] & a_2\ar@/^5pt/@{..>}[d]\ar@{.}[l]\\
i\ar[r] & j \ar[dl] \ar[dr]\ar[ur]\ar[ul]& b_2\ar[l]\\
a_0\ar@/^5pt/@{..>}[u] \ar@/_5pt/@{..>}[r] & b_1\ar[u] & a_1\ar@/^5pt/@{..>}[l] \ar@/_5pt/@{..>}[u]}
\]

Then the representation $\rad P(j)$ has the following structure, where each simple representation $S(a_r)$, $r=1,2,\dots,k,$ appears in the top of $\rad P(j)$.  Moreover, $\rad P(j)$  is supported on the paths $a_r \to \dots \to b_{r+1} \leftarrow \dots \leftarrow a_{r+1}$ for $r<k$. 

\[\xymatrix@R=4pt@C=4pt{&&&a_0 \ar[dl]\ar[dr]&&&& a_1 \ar[dl]\ar[dr]&&&& a_2 \ar[dl]\ar[dr]&&&& a_k\ar[dl] \ar@{..}[dr]\\
\rad P(j)= &&\udots &&\ddots\ar[dr]&&\udots\ar[dl]&&\ddots\ar[dr]&&\udots\ar[dl]&&\cdots\ar[dr]&&\udots\ar[dl]&& \\
&&&&&b_1 &&&& b_2 &&&& b_k &&&&\\
&&&&&\vdots &&&& \vdots &&&& \vdots &&&&}
\]

The representation $\rad P(i)$ has the following structure.  Note that the top of $\rad P(i)$ is not generally $S(j)$ and corresponds to all vertices that have an arrow from $i$, but here we only depict the relevant vertices.  Moreover, $\rad P(i)$ may not be supported on the path $\dots \leftarrow a_0\to \dots \to b_1$ if $i\to j \to a_0\to i$ is a 3-cycle and the arrow $a_0\to i$ is a boundary arrow.   We consider the most general situation below, because the proof in this special case follows similarly. 

\[
\xymatrix@R=4pt@C=4pt{&&&&&&&&&j \ar[drr]\ar[dll]\ar[drrrrrr]\ar[dllllll]\\
&&&a_0 \ar[dl]\ar[dr]&&&& a_1 \ar[dl]\ar[dr]&&&& a_2 \ar[dl]\ar[dr]&&&& a_k\ar[dl] \ar@{..}[dr]\\
\rad P(i)= &&\udots &&\ddots\ar[dr]&&\udots\ar[dl]&&\ddots\ar[dr]&&\udots\ar[dl]&&\cdots\ar[dr]&&\udots\ar[dl]&& \\
&&&&&b_1 &&&& b_2 &&&& b_k &&&&\\
&&&&&\vdots &&&& \vdots &&&& \vdots &&&&
}
\]

Let $f\in \Hom_B(\rad P(j), \rad P(i))$ be nonzero.  Then $f=(f_x)_{x\in Q_0}$ is a collection of linear maps that map each vector space at vertex $x$ in the representation $\rad P(j)$ to the vector space at $x$ in the representation $\rad P(i)$.  Note that by Proposition~\ref{prop schurian}, the dimension of the vector space at each vertex of the radicals $\rad P(i), \rad P(j)$ is at most one.  Therefore, each $f_x$ is multiplication by a scalar.   Moreover, the linear maps $\varphi_{\alpha}, \varphi'_{\alpha}$ on the arrows $\alpha\in Q$ in the representations $\rad P(i), \rad P(j)$ respectively are identity.  

Since $f$ is nonzero, then the restriction of $f$  to the top of $\rad P(j)$ is also nonzero.  Hence, there exists some $a_r$ such that $f_{a_r}\not=0$, and without loss of generality we may assume that $f_{a_r}=1$.  Since $f$ is a map of quiver representations, and, by the structure of the representations depicted above,  $\varphi_{\alpha}, \varphi'_{\alpha}$ equal 1 for all arrows $\alpha$ in the support of $\rad P(i), \rad P(j)$, respectively,  it follows that the map $f$ is also 1 on every vertex along the paths
\[b_r \leftarrow \dots \leftarrow a_r\to \dots \to b_{r+1}.\]
Similarly, since $f_{b_{r+1}}=f_{b_r}=1$, then we conclude that the map $f$ is 1 on every vertex along the paths $a_{r+1}\to \dots \to b_{r+1}$ and $a_{r-1}\to \dots \to b_{r}$.  In particular, $f_{a_{r+1}}=f_{a_{r-1}}=1$.  Continuing in this way we conclude that $f_{a_r}=1$ for all $r$.  Hence, $f$ maps via identity the top of $\rad P(j)$ to $\rad P(i)$.  It is easy to see that such $f$ factors through the projective $P(j)$ so $f: \rad P(j) \hookrightarrow P(j)\to \rad P(i)$.  This shows that $f$ is zero in $\underline{\Hom}_B(\rad P(j), \rad P(i))$.  Therefore, $\underline{\Hom}_B(\rad P(j), \rad P(i))=0$ which completes the proof.  
\end{proof}

\subsection{Construction of the checkerboard polygon}
Given a quiver $Q$ that satisfies Definition~\ref{def Q} and of total weight $2N$, we provided a combinatorial construction of a checkerboard polygon $\mathcal{S}$ of size $2N$ in \cite[Section 3.1]{SS}. That construction only depends on the structure of the quiver $Q$ and a priori does not take into account the representation theory of the corresponding dimer tree algebra $B$.  At the time, we were only able to show that our construction yields a model for the syzygy category of $B$ in the special case where every chordless cycle in $Q$ is of length 3. By Theorem~\ref{thm algo}, we now know that the syzygy category of $B$ is always modeled by a polygon $S$ of size $2N$. In this section, we construct the checkerboard pattern of that polygon  using the representation theory of $B$, and we show that it agrees with the combinatorial construction in \cite{SS}.

First, we will need the following result.  By Theorem~\ref{thm algo}, every indecomposable object in $\scmp\,B$ corresponds to a 2-diagonal in $S$.  By Lemma~\ref{lem:rad}, each radical is an indecomposable non-projective syzygy, and we denote by $\rho(i)$  the 2-diagonal corresponding to $\rad P(i)$.  The following result is a reformulation of Proposition~\ref{prop:barrow} in the geometric setting.   Recall that $R$ denotes the clockwise rotation in $S$.  

\begin{prop}\label{prop:cross}
If $i\to j$ is a boundary arrow in $Q$, then the 2-diagonals $R\rho(i)$ and $\rho(j)$ do not cross.  
\end{prop}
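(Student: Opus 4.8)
The plan is to translate the vanishing of extensions from Proposition~\ref{prop:barrow} into a geometric statement about non-crossing, using the dictionary established in Theorem~\ref{thm algo} together with Lemma~\ref{lem left to right}. The key point is that Theorem~\ref{thm algo} gives a triangle equivalence $\scmp\,B\cong\diags$, and under this equivalence the shift functor $\Omega$ corresponds to the clockwise rotation $R$. Hence the syzygy $\Omega\,\rad P(i)$ corresponds to the 2-diagonal $R\rho(i)$, while $\rad P(j)$ corresponds to $\rho(j)$.

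First I would recall that in the category $\diags$ the extension groups between objects are governed entirely by crossings of 2-diagonals: by Theorem~\ref{thm BM} and the remarks following it, $\textup{Ext}^1(M_\zg,M_{\zg'})\neq 0$ or $\textup{Ext}^1(M_{\zg'},M_\zg)\neq 0$ if and only if $\zg$ and $\zg'$ cross. More precisely, Lemma~\ref{lem left to right} says that if two oriented 2-diagonals cross, then exactly one of the two $\textup{Ext}^1$ spaces is nonzero (depending on whether the crossing is from right to left or from left to right), so that crossing is equivalent to at least one of the two extension groups being nonzero. Now apply Proposition~\ref{prop:barrow}: since $i\to j$ is a boundary arrow, both $\textup{Ext}^1_{\scmp B}(\Omega\,\rad P(i),\rad P(j))$ and $\textup{Ext}^1_{\scmp B}(\rad P(j),\Omega\,\rad P(i))$ vanish. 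Transporting this along the equivalence $F\colon\diags\to\scmp\,B$ of Theorem~\ref{thm algo}, we get $\textup{Ext}^1(M_{R\rho(i)},M_{\rho(j)})=\textup{Ext}^1(M_{\rho(j)},M_{R\rho(i)})=0$ in $\diags$. By the equivalence with the 2-cluster category and Lemma~\ref{lem left to right}, this forces $R\rho(i)$ and $\rho(j)$ not to cross, which is the claim.

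The only genuine subtlety is bookkeeping around degenerate cases: a priori one should check that $R\rho(i)$ and $\rho(j)$ are honest (distinct, nonzero) 2-diagonals and that the statement ``do not cross'' is well-posed when, say, $R\rho(i)=\rho(j)$ or when they share an endpoint. Lemma~\ref{lem:rad} guarantees that each $\rad P(i)$, hence each $\rho(i)$ and each rotate $R\rho(i)$, is a genuine indecomposable object, i.e.\ an actual 2-diagonal; and two 2-diagonals that coincide or merely share an endpoint do not cross in the sense of Definition~\ref{def 2pivot} and the conventions of Section~\ref{sect 23}, so those cases are consistent with the conclusion. The main (and essentially only) obstacle is therefore making sure the translation between $\textup{Ext}^1$-vanishing and non-crossing is applied in the correct direction — that is, invoking the ``if'' direction of the crossing/extension dictionary (crossing $\Rightarrow$ some extension is nonzero) in its contrapositive form (both extensions zero $\Rightarrow$ no crossing) — which is exactly what Lemma~\ref{lem left to right} provides. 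Once that is in place the proof is immediate.
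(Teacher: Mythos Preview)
Your proposal is correct and follows essentially the same approach as the paper: invoke Proposition~\ref{prop:barrow} to get vanishing of $\textup{Ext}^1$ in both directions, transport along the triangle equivalence of Theorem~\ref{thm algo} (under which $\Omega$ corresponds to $R$), and then apply the crossing/extension dictionary from Lemma~\ref{lem left to right} in contrapositive form. The paper's proof is more terse and does not spell out the degenerate cases you address, but the argument is the same.
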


\begin{proof}
By Proposition~\ref{prop:barrow}, if $i\to j$ is a boundary arrow then the syzygies $\Omega\, \rad P(i)$ and $\rad P(j)$ do not have extensions in either direction in $\scmp\,B$.  From Lemma~\ref{lem left to right}, we know that extensions in $\scmp\,B$ correspond to crossings in the polygon, therefore $R\rho(i)$, the 2-diagonal corresponding to $\Omega\, \rad P(i)$, and $\rho(j)$ do not cross. 
\end{proof}

\begin{construction}\label{const}(Checkerboard polygon)
Given the algebra $B$ we construct a polygon $S$ together with a certain checkerboard pattern. We place line segments in the plane corresponding to 2-diagonals $\rho(i)$ and then add boundary vertices and segments of $S$.  

{\it Step 1: Construct the checkerboard patter.}  Start with a boundary arrow $i\to j$ in $Q$, then by \cite[Proposition 3.32]{SS} the corresponding 2-diagonals $\rho(i)$ and $\rho(j)$ cross in $S$.  Hence, we draw two line segments in the plane labeled $\rho(i)$ and $\rho(j)$ that cross.  Moreover, by Proposition~\ref{prop:cross} the clockwise rotation $R\rho(i)$ of $\rho(i)$ and the line $\rho(j)$ do not cross.   Hence, an endpoint of $\rho(i)$ and an endpoint of $\rho(j)$ are connected by a boundary segment of $S$ as in Figure~\ref{fig:fig1} on the left.  We also shade the triangular bounded region on the boundary of $S$. We label the lines in such a way that when going around the shaded region in the counterclockwise direction starting on the boundary edge, one passes on $\rho(i)$ first and then on $\rho(j)$, see Figure~\ref{fig:fig1}.

Next, since $i\to j$ is a boundary arrow there exists a unique chordless cycle 
\[\xymatrix@C=9pt{i\ar[r] & j\ar[r] & k \ar[r] & \dots \ar[r] & s\ar@/_9pt/[llll]}\]
containing this arrow.  For every new vertex in this cycle $k, \dots, s$, draw a line segment $\rho(k), \dots, \rho(s)$ such that two lines cross if and only if there is an arrow between their corresponding vertices.  Moreover, we draw these lines so that they bound an interior polygonal region with as many edges as vertices in the cycle.  Finally, we shade the region enclosed by these lines, see Figure~\ref{fig:fig1} on the right.

\begin{figure}
\centerline{{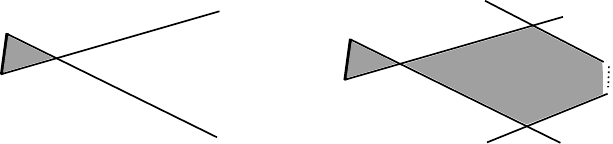}}
\caption{Construction~\ref{const} of the checkerboard pattern.}
\label{fig:fig1}
\end{figure}

If the arrow $s\to i$ is a boundary arrow, then again by Proposition~\ref{prop:cross}, we draw a boundary segment of $S$ between the two neighboring endpoints of $\rho(s)$ and $\rho(i)$ and shade the resulting triangular region on the boundary.  Otherwise,  $s\to i$ is an interior arrow, and then there exists a unique other cordless cycle containing this arrow. Similarly to the above, we add an interior shaded region whose edges correspond to the vertices in the cordless cycle.   Continue in this way until all the cycles and boundary arrows of $Q$ are exhausted, see for example Figure~\ref{fig:fig2}.

\begin{figure}
\begin{minipage}[c]{0.3\linewidth}
\[\xymatrix@R=10pt@C=10pt{
i\ar[dd]_{\color{red} 4} & s\ar[l]_{\color{red} 3}\\
&&l\ar[ul]_{\color{red} 2}\\
j\ar[r]^{\color{red} 8}&k\ar[d]^{\color{red} 5}\ar[ur]_{\color{red} 1}\\
r\ar[u]^{\color{red} 7}&t\ar[l]^{\color{red} 6}
}\]
\end{minipage}\hfill
\begin{minipage}[c]{0.7\linewidth}
\centerline{{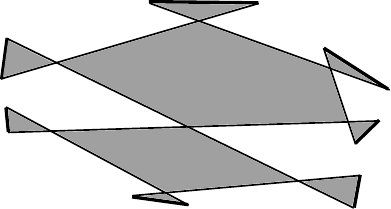}}
\end{minipage}
\caption{An example of a quiver and its checkerboard pattern after completing Step 1 of Construction~\ref{const}.}
\label{fig:fig2}
\end{figure}

Now, all 2-diagonals corresponding to radicals are complete, it remains to construct the rest of the boundary of $S$.

{\it Step 2: Orient the 2-diagonals.}  If we were to label the vertices of $S$ from 1 to $2N$ in order, then the two endpoints of each $\rho(i)$ will have opposite parities, since $\rho(i)$ is a 2-diagonal.  Therefore, we can choose to orient $\rho(i)$ so that it goes from an odd to an even labeled endpoint as follows. 
Choose an endpoint of $\rho(i)$ on the boundary of $S$ and label it odd.  Orient the line $\rho(i)$ away from this endpoint. Label the other endpoint of $\rho(i)$ even.   The odd labeled endpoint of $\rho(i)$ lies in a shaded triangular region that contains a boundary segment of $S$ and another diagonal $\rho(j)$.  Label the remaining boundary vertex of this region as even.  Then the other endpoint of $\rho(j)$ has an odd label, and we orient $\rho(j)$ from its odd to its even endpoint.  For an example see Figure~\ref{fig:fig3} on the left. 

By construction, these triangular shaded regions on the boundary correspond to boundary arrows in $Q$. Since every vertex connects to exactly two boundary arrows, it follows that the odd labeled endpoint of $\rho(j)$ also lies in a such a region together with another diagonal $\rho(r)$.  Label this endpoint of $\rho(r)$ even and orient the diagonal towards it.  Continuing in this way, we walk around the boundary of the quiver $i, j, r, \dots $ until we reach $i$ again.  Since every vertex lies on the boundary of $Q$, this determines the orientation of every radical line and labels the vertices of $S$.

\begin{figure}
\centerline{\scalebox{.9}{{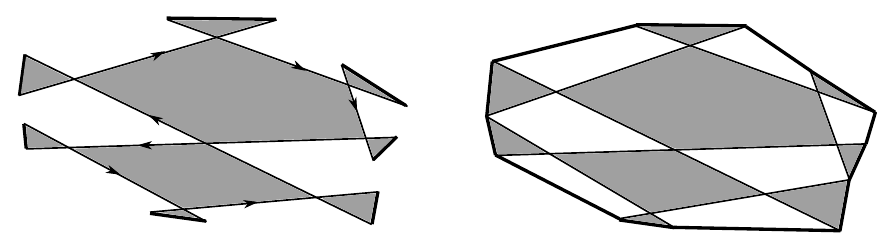}}}
\caption{The checkerboard pattern after completing Steps 2 and 3 respectively of Construction~\ref{const} of the quiver in Example~\ref{fig:fig2}.}
\label{fig:fig3}
\end{figure}

{\it Step 3: Construct the boundary of $S$.}  Every two shaded triangular regions next to each other are separated by a white region $W$.  Consider two vertices $x,y$ of $W$ on the boundary of $S$, see Figure~\ref{fig:fig4}.  If the labels of $x$ and $y$ have the same parity, then we identify them. Otherwise, if the labels of $x$ and $y$ have opposite parity, then we add a boundary segment of $S$ that connects these endpoints.   This gives a polygon with a checkerboard pattern, see Figure~\ref{fig:fig3} on the right for an example.  Note that this is the unique minimal way to close up the white regions to get a polygon given the parity of the vertices.
\end{construction} 

\begin{figure}
\centerline{\scalebox{.9}{{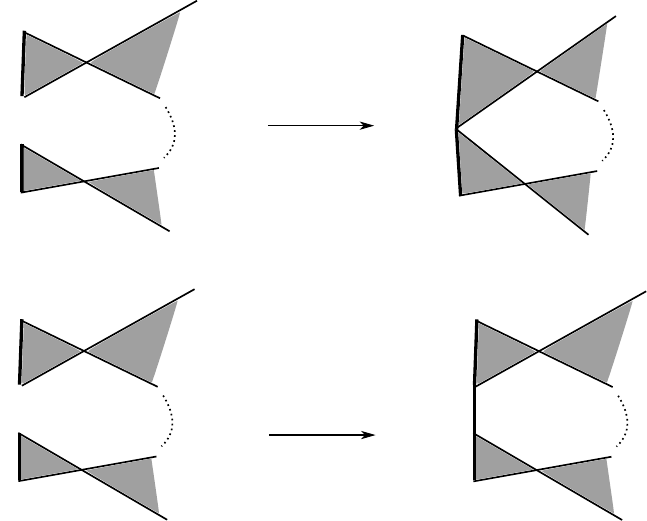}}}
\caption{Closing white regions in Step 3 of Construction~\ref{const}.}
\label{fig:fig4}
\end{figure}

\bigskip

The next result is the main theorem of this section. It says that the above construction gives an alternate way to obtain the checkerboard polygon of \cite{SS}.

Let $B=\textup{Jac}(Q,W)$ be a dimer tree algebra of total weight $2N$. Denote by $\phi\colon\diags\to\scmp\,B$ the triangle equivalence of Theorem~\ref{thm algo}, where $S$ is a polygon with $2N$ vertices. For $i\in Q_0$, let $\rho (i)=\phi^{-1}(\rad P(i))$ be the 2-diagonal in $S$ corresponding to the radical of $P(i)$. We call the $\rho(i)$  \emph{radical lines}.
\begin{thm}\label{thm poly}
With the notation above, the positions of the radical lines $\rho(i)$ in the polygon $S$ are uniquely determined up to rotation. The resulting checkerboard pattern on $S$ is the same as the checkerboard pattern defined in \cite{SS}.
\end{thm}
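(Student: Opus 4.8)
The plan is to distill from the earlier results a short list of combinatorial constraints that the radical lines $\rho(i)=\phi^{-1}(\rad P(i))$ must satisfy inside $S$, to show that these constraints pin down the whole configuration $\{\rho(i)\}_{i\in Q_0}$ uniquely up to a rotation of $S$, and finally to check that the radical lines of the checkerboard polygon of \cite{SS} satisfy exactly the same constraints. For the first part, Theorem~\ref{thm algo} identifies $\rad P(i)$ with the $2$-diagonal $\rho(i)$, and combining \cite[Proposition~3.32]{SS} with Lemma~\ref{lem:MN} and Lemma~\ref{lem left to right} shows that $\rho(i),\rho(j)$ cross in $S$ if and only if there is an arrow between $i$ and $j$ in $Q$; moreover, if $i\to j$ is an arrow then $\rho(j)$ crosses $\rho(i)$ from right to left and not conversely, since $Q$ has no $2$-cycles. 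Applied to the arrows of a chordless cycle $C$, and using that the full subquiver on $C$ is $C$, this makes the crossing graph of the $|C|$ radical lines attached to $C$ a $|C|$-cycle, so these lines bound a $|C|$-gon region of $S$; we fix the checkerboard colouring by declaring such regions shaded. Finally, Proposition~\ref{prop:cross} gives the boundary non-crossing condition: for a boundary arrow $i\to j$ the rotated line $R\rho(i)$ does not cross $\rho(j)$. These are precisely the inputs that drive Construction~\ref{const}.

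For the rigidity part I would show that, run on $Q$, Construction~\ref{const} produces the unique (up to rotation of $S$) configuration of a $2N$-gon with $2$-diagonals indexed by $Q_0$ satisfying the constraints above, by inducting on the distance in the dual graph $G$ from a fixed chordless cycle $C_0$ with a single interior arrow. The $|C_0|$ radical lines around $C_0$ bound a $|C_0|$-gon region; placing it is the only remaining ambiguity and is absorbed by a rotation, so fix it. Assume all radical lines and boundary segments coming from the chordless cycles and boundary arrows at distance $\le d$ have been placed. A chordless cycle $C$ at distance $d+1$ is glued to the placed part along the unique interior arrow $\alpha=x\to y$ it shares with its parent; the lines $\rho(x),\rho(y)$ already cross, and the new lines $\rho(z)$ for $z\in C\setminus\{x,y\}$ must cross $\rho(x),\rho(y)$ exactly according to $C$, must cross the remaining placed lines exactly as dictated by the arrows of $Q$, and — by planarity of $Q$ and the tree structure of $G$ — must do so on the side of $\rho(x)\cup\rho(y)$ opposite the parent region, which forces the new shaded $|C|$-gon and its position as in Figure~\ref{fig:fig1}. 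For a boundary arrow $i\to j$ on an already placed cycle, the non-crossing of $R\rho(i)$ and $\rho(j)$ from Proposition~\ref{prop:cross}, together with the crossing orientation, forces a boundary edge of $S$ between prescribed endpoints of $\rho(i)$ and $\rho(j)$ and forces the intervening triangle to be shaded. After $G$ is exhausted all radical lines and shaded regions are determined, and Steps~2--3 of Construction~\ref{const} close the white regions in the unique minimal way compatible with the parities coming from the crossing orientations; since the ambient polygon has $2N$ vertices by Theorem~\ref{thm algo} and Proposition~\ref{cor size of S}, the configuration is pinned down, so $\{\phi^{-1}(\rad P(i))\}$ is, up to rotation, the output of Construction~\ref{const} on $Q$.

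It remains to compare with \cite{SS}. The checkerboard polygon of \cite{SS} is built from $Q$ by the same recursion over $G$ — an $\ell$-gon region for each chordless $\ell$-cycle, a triangle for each boundary arrow, glued along shared radical lines — only with the combinatorial constraints in place of their representation-theoretic counterparts used above; it has $2N$ boundary edges by Proposition~\ref{cor size of S}, its radical lines cross exactly along the arrows of $Q$ (which is how they are defined there), and they satisfy the boundary non-crossing property. Hence that polygon is also an output of Construction~\ref{const} on $Q$, so by the uniqueness just established it agrees, up to rotation, with the configuration $\{\rho(i)\}$. Since the whole checkerboard pattern is recovered from the radical lines together with the prescription that the regions cut out by the chordless cycles are shaded, the two checkerboard patterns coincide, which is the assertion of the theorem.

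The decisive step, and the one I expect to be the real obstacle, is the rigidity in the middle paragraph: verifying that \emph{each} gluing step of Construction~\ref{const} is forced rather than merely consistent, i.e. that the crossing data, the crossing orientations, and the boundary non-crossings leave no freedom beyond the initial rotation. This rests on a careful use of planarity of $Q$ and of the tree structure of $G$ to exclude spurious crossings between radical lines from different branches of $G$ and to determine on which side of an already placed region a newly glued region must be drawn.
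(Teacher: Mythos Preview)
Your proposal follows the same two-step strategy as the paper: first argue that Construction~\ref{const} has a unique output up to rotation, then identify that output with the checkerboard polygon of \cite{SS}. On the uniqueness side you are more explicit than the paper, which dispatches it in a single sentence by observing that Construction~\ref{const} is a deterministic procedure once the first pair $\rho(i),\rho(j)$ is placed; your induction over the dual tree $G$ unpacks exactly this determinism, and the caveat you flag in your final paragraph is real but mild, since at each gluing step the ``side'' is forced by the requirement that the new lines not cross the already placed lines from the parent cycle except at the shared arrow.

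The genuine difference is in the comparison with \cite{SS}. You propose to show that the \cite{SS} polygon satisfies the same abstract constraints (crossings along arrows, boundary non-crossing for boundary arrows) and then invoke uniqueness. The paper instead matches the two constructions directly, region by region: it recalls that in \cite{SS} the checkerboard is the medial graph of the \emph{twisted} dual graph of $Q$, observes that each chordless cycle produces the same shaded polygon in both pictures, and then explains that the twist in \cite{SS} is precisely what makes two adjacent shaded regions meet with the correct relative orientation (both read counterclockwise in $\cals$, although in $Q$ adjacent chordless cycles have opposite orientations). Your route can be made to work, but it obliges you to verify from the \cite{SS} construction that $R\rho(i)$ and $\rho(j)$ do not cross for boundary arrows $i\to j$, which is not entirely immediate from the medial-graph description; the paper's direct region-by-region matching avoids this extra check.
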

\begin{proof}
 The relative position of each radical line in the construction~\ref{const} is determined as soon as the first two radical lines $\rho(i), \rho(j)$ are drawn in step 1, see the left picture in Figure~\ref{fig:fig1}. Furthermore, the positions of $\rho(i)$ and $\rho(j)$ are determined up to rotation.
 
 Now we show that the construction \ref{const} produces the same checkerboard pattern as the one in \cite{SS}.  Recall that the construction in \cite{SS} realizes the pattern using the medial graph of the twisted dual graph of the quiver $Q$. In \ref{const} each chordless cycle of $Q$ gives rise to an interior shaded region in $S$. In \cite{SS}, this region is realized by the medial graph of the dual graph of the chordless cycle, see the pictures on the left in Figure~\ref{fig medial}.
\begin{figure}[htbp]
\begin{center}
\scriptsize\scalebox{1.2}{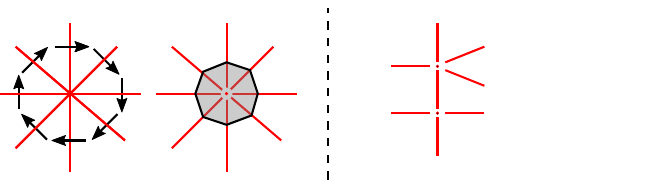}
\caption{Proof of Theorem~\ref{thm poly}. On the left, the shaded region of a chordless cycle of $Q$ is realized in the medial graph of the the dual graph of $Q$. On the right, the twisted dual graph of the quiver in Figure~\ref{fig:fig2} and the corresponding shaded regions of the medial graph.}
\label{fig medial}
\end{center}
\end{figure}

If two chordless cycles share an arrow $j\to k$ as in Figure~\ref{fig:fig2}, the corresponding shaded regions meet at the intersection of the radical lines $\rho(j)$ and $\rho(k)$. In the quiver $Q$, the two chordless cycles always have opposite orientations, one clockwise and the other counterclockwise, because they share the arrow $j\to k$.  However, in the checkerboard pattern, both cycles are recovered by going around the corresponding shaded regions in the same counterclockwise direction.

This is exactly the reason why in \cite{SS} the dual graph is twisted. For example, in the situation of Figure~\ref{fig:fig2}, the  twisted dual graph and its medial graph are shown in the two pictures on the right in Figure~\ref{fig medial}. The twisting occurs along the edge corresponding to the arrow $j\to k$ (labeled 8 in the figure), and it is the reason why the boundary arrows $5$ and $7$ switch their relative positions when going from the dual graph to the twisted dual graph. The medial graph is shown in the rightmost picture in Figure~\ref{fig medial}. It agrees with the interior shaded regions in Figure~\ref{fig:fig2}.

This shows that the two constructions produce the same checkerboard patterns in the interior of the polygon. The fact that the checkerboard patterns also agree at the boundary follows from the uniqueness in step 3 of the construction~\ref{const}.
\end{proof}

\begin{corollary}
The checkerboard pattern obtained in construction \ref{const} has the following properties.
\begin{itemize}
\item [(a)] The interior shaded regions correspond to the chordless cycles in $Q$, the boundary shaded regions to the boundary arrows in $Q$ and the crossings between the radical lines to the arrows in $Q$.
\item [(b)] The orientations of the radical lines are such that the segments that bound a shaded region are oriented in the same way (either clockwise or counterclockwise) and the segments that bound  a white region have alternating orientation.
\item [(c)] Every white region has either a single vertex on the boundary or a boundary segment.
\item [(d)] Every white region has an even number of sides.
\item[(e)] If $\za$ is a boundary arrow and $\cals_\za$ the corresponding shaded region then the cycle path $\mathfrak{c}(\za)$ and the cocycle path $\overline{\mathfrak{c}}(\za)$ of $\za$ are given by the bounding edges of the white regions adjacent to $\cals_\za$.
\end{itemize}
\end{corollary}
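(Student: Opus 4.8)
The plan is to derive each of the five properties directly from Construction~\ref{const}, invoking Theorem~\ref{thm poly} only to guarantee that the combinatorial object we have drawn really is a checkerboard polygon modelling $\scmp\,B$ in the sense of Theorem~\ref{thm algo} (so in particular it carries a genuine checkerboard colouring, with opposite sectors at each crossing of the same colour). Properties (a), (c) and (e) will be read off from the individual steps of the construction; properties (b) and (d) require a short local analysis at the crossing points using the odd/even labelling introduced in Step~2, and may alternatively be quoted from \cite{SS} via Theorem~\ref{thm poly}.

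For (a), Step~1 of Construction~\ref{const} creates exactly one interior shaded region for each chordless cycle of $Q$, namely an $m$-gon for a cycle of length $m$, and exactly one triangular boundary shaded region for each boundary arrow; by Proposition~\ref{prop Q} the process terminates precisely when every chordless cycle and every boundary arrow has been used. For the crossings, recall that under the equivalence $\phi$ of Theorem~\ref{thm algo} the line $\rho(i)$ corresponds to $\rad P(i)$; by \cite[Proposition 3.32]{SS} an arrow $i\to j$ (resp.\ $j\to i$) exists in $Q$ if and only if $\Ext^1_B(\rad P(i),\rad P(j))\neq 0$ (resp.\ $\Ext^1_B(\rad P(j),\rad P(i))\neq 0$), by Lemma~\ref{lem:MN} this equals the nonvanishing of the corresponding $\Ext^1$ in $\scmp\,B$, and by Lemma~\ref{lem left to right} the latter holds if and only if $\rho(i)$ and $\rho(j)$ cross. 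Thus crossings of radical lines correspond bijectively to arrows of $Q$, and combined with the previous sentence this gives (a). Property (c) is then immediate from Step~3: any two adjacent shaded regions are separated by a white region with two distinguished boundary endpoints $x,y$, and Step~3 either identifies them (yielding a single boundary vertex) or joins them by a single boundary segment.

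For (e) I would unwind Definition~\ref{def cycle path 2}. The cycle path $\mathfrak{c}(\za)=\za_1\cdots\za_{\ell}$ of a boundary arrow $\za=\za_1$ runs through a sequence of pairwise distinct chordless cycles $C_1,\dots,C_{\ell-1}$, where $C_i$ contains the length-two subpath $\za_i\za_{i+1}$ and $C_i, C_{i+1}$ share the interior arrow $\za_{i+1}$. By part (a) this translates into a chain of interior shaded regions $\cals_{C_1},\dots,\cals_{C_{\ell-1}}$ in which $\cals_{C_i}$ and $\cals_{C_{i+1}}$ meet at the crossing point of the radical lines indexed by the endpoints of $\za_{i+1}$, with $\cals_{C_1}$ adjacent to the boundary region $\cals_{\za}$. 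Consecutive members of this chain are separated by white regions, and inspecting the local picture drawn in Steps~1--2 (compare Figures~\ref{fig:fig1} and~\ref{fig:fig3}) shows that the bounding edges of these white regions are exactly the radical lines $\rho(k)$ for the vertices $k$ traversed by $\mathfrak{c}(\za)$; the reading along $\overline{\mathfrak{c}}(\za)$ is the dual statement, giving (e).

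The main work is in (b), and with it the evenness in (d). In Step~2 each radical line $\rho(i)$ is oriented from its odd endpoint to its even endpoint, which makes sense since $\rho(i)$ is a 2-diagonal and hence has endpoints of opposite parity. At a crossing point $p$ of $\rho(i)$ and $\rho(j)$, the two lines cut a neighbourhood of $p$ into four sectors, alternately shaded and white, and each of $\rho(i),\rho(j)$ has one half-edge at $p$ pointing toward $p$ and one pointing away. A direct case check of the four sectors, using that the shading convention fixed in Step~1 and the orientation convention of Step~2 are set up compatibly (equivalently, that the pattern agrees with \cite{SS}), shows that the two half-edges bounding a \emph{shaded} sector are one incoming and one outgoing, whereas the two bounding a \emph{white} sector are both incoming or both outgoing. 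Hence, traversing the boundary of a shaded region, each bounding segment is oriented consistently with a single cyclic orientation of that boundary, which is (b) for shaded regions; traversing the boundary of a white region, the orientations of successive bounding segments alternate, which is (b) for white regions, and an alternating cyclic pattern forces an even number of segments, which is (d). The one genuinely delicate point I anticipate is checking that this local bookkeeping is globally coherent, i.e.\ that the parity labels produced by the walk around $\partial Q$ in Step~2 induce at \emph{every} crossing the compatible configuration just described; I would handle this by verifying the base case (the first boundary shaded region) and propagating along the tree structure of the dual graph, or else by simply appealing to Theorem~\ref{thm poly} and the corresponding statements in \cite{SS}.
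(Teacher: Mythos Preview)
Your proposal is correct in spirit but takes a genuinely different route from the paper. The paper's proof is a single sentence: since Theorem~\ref{thm poly} shows that the checkerboard pattern of Construction~\ref{const} coincides with the one defined combinatorially in \cite{SS}, all five properties are simply quoted from specific results already established there (Lemma~3.1 for (a), Lemma~3.24 for (b), Lemma~3.11 for (c), Lemma~3.20 for (d), Remark~3.14 for (e)). In other words, the corollary is a transport of known properties across the identification of Theorem~\ref{thm poly}, with no new argument needed.

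By contrast, you attempt to re-derive each property directly from the steps of Construction~\ref{const}, using the representation-theoretic ingredients (Lemma~\ref{lem left to right}, Lemma~\ref{lem:MN}, \cite[Proposition~3.32]{SS}) for (a), reading (c) off Step~3, unwinding Definition~\ref{def cycle path 2} for (e), and doing a local sector analysis at crossings for (b) and (d). This is more self-contained and illuminating as to \emph{why} the properties hold, but it is considerably longer, and you yourself flag the delicate point in (b)--(d): global coherence of the parity labelling around the tree requires an inductive argument you only sketch. You sensibly note that one can fall back on Theorem~\ref{thm poly} and \cite{SS} at that point---and that fallback is precisely the entirety of the paper's proof. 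So your argument is sound, but the paper buys brevity by treating the corollary as a pure citation, whereas your approach buys independence from the details of \cite{SS} at the cost of reproving material already available there.
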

\begin{proof} 
These results were shown in \cite{SS}.
Part (a) is Lemma~3.1, (b) is Lemma~3.24, (c) is Lemma~3.11, (d) is Lemma~3.20, and (e) is Remark~3.14  in that paper.
\end{proof}

\section{Proof of Theorem~\ref{thm main}}\label{sect 3}
Theorem~\ref{thm algo} implies that there exists a triangle equivalence 
$ \phi \colon\textup{Diag}(S)\to \scmp\,B,$
where $S$ is a polygon with $2N$ vertices and $2N$ is the total weight of the quiver $Q$. Let $\cals$ be the checkerboard polygon of $Q$ constructed in \cite{SS}. Then $\cals$ and $S$ have the same size. By Theorem~\ref{thm poly}, there exists a triangle equivalence $\pi\colon \diag\to \textup{Diag}(S)$ such that 
$F=\phi\circ\pi \colon \diag \to \scmp\,B$ 
maps the radical line $\rho(i)$ to $\rad P(i)$, for all $i$.
In fact, up to rotation, $\pi$ is simply given by forgetting the checkerboard pattern.

Moreover, $F(R(\zg))=\zO\,F(\zg)$, because $\zO$ is the negative shift in $\scmp\,B$ by \cite{KR} and $R$ is the negative shift in $\diag$ by \cite{BM}. 
Therefore we also have $F(R^2(\zg))=\zO^2 F(\zg)$, and hence
$F(R^2(\zg))=\tau^{-1} F(\zg)$, because  $\tau^{-1}=\zO^2$ in $\scmp\,B$ by \cite{KR}.

It remains to show that the projective resolutions in $\scmp\,B$ are determined by the crossing patterns in $\diag$. Let $M\in\scmp\,B$ be indecomposable, and let $\zg=F^{-1}(M)$ denote the corresponding 2-diagonal in $ \diag$. Let 
\[\xymatrix{P_1\ar[r]^f&P_0\ar[r]&M\ar[r]&0}\] be a minimal projective presentation of $M$. Then part (a) of Proposition~\ref{prop:ext} implies that $P_0=\oplus_x P(x)$, where the sum is  over all $x\in Q_0$ such that $\Ext^1_{\scmp\,B} (M, \rad P(x))\ne 0$. Now using the equivalence $F$ together with Lemma~\ref{lem left to right}(a), we see that 
$P_0=\oplus_x P(x)$, where the sum is over all $x\in Q_0$ such that $\rho(x)$ crosses $\zg$ from right to left.
Similarly, the parts (b) of Proposition~\ref{prop:ext} and Lemma~\ref{lem left to right} imply that 
$P_1=\oplus_y P(y)$, where the sum is over all $y\in Q_0$ such that $\rho(x)$ crosses $\zg$ from  left to right.
Thus the morphism $f\colon P_1\to P_0$ satisfies the conditions of the map $f_\zg$ in the statement. 

Finally, the fact that $F$ maps 2-pivots to irreducible morphisms and meshes to Auslander-Reiten triangles follows directly from \cite{BM}.

\appendix 
\section{}
Here we give a proof of the following well-known result.
\begin{prop}
 Let $Q$ be the quiver given by a single chordless cycle of length $N$ and  $B=\textup{Jac}(Q,W)$ be its dimer tree algebra. Let $S$ be a polygon with $2N$ vertices. Then $\scmp \, B\cong \diags$.   
\end{prop}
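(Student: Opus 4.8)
The plan is to identify $B$ explicitly as a selfinjective cluster-tilted algebra of Dynkin type $\mathbb{D}_N$ and then invoke the known combinatorial model for its stable module category. First I would observe that when $Q$ is a single oriented $N$-cycle with the potential $W$ equal to that cycle, the Jacobian relations $\partial_\za W$ are exactly the "all paths of length $N-1$ obtained by deleting one arrow'' relations; since the cycle has a single chordless cycle, every arrow is a boundary arrow of weight $2$, so the total weight is $2N$, which matches the size of $S$. This is the selfinjective Nakayama-type algebra, and it is well known (see e.g.\ the work on cluster-tilted algebras of Dynkin type) that $B$ is the cluster-tilted algebra of type $\mathbb{D}_N$ associated to a certain triangulation of the once-punctured $N$-gon; in particular $B$ is selfinjective. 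Hence every module is a syzygy, $\cmp\,B=\textup{mod}\,B$, and $\scmp\,B=\underline{\textup{mod}}\,B$.

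Next I would invoke Theorem~\ref{thm BM} together with Theorem~\ref{thm chen}-type reductions already available: the stable module category of this selfinjective algebra is triangle equivalent to the $2$-cluster category of type $\mathbb{A}_{N-2}$. One clean route is via the $3$-Calabi-Yau property of $\scmp\,B$ from \cite{KR} and the classification of $3$-Calabi-Yau triangulated categories with finitely many indecomposables in the relevant rank; alternatively, one can use the explicit description of $\underline{\textup{mod}}\,B$ for selfinjective cluster-tilted algebras of type $\mathbb{D}$ (its AR-quiver is a stable tube / $\mathbb{Z}\mathbb{A}$-shape quotient whose mesh category one recognizes directly). In either case one gets $\underline{\textup{mod}}\,B\cong\calc^2$ of type $\mathbb{A}_{N-2}$. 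Combining with Theorem~\ref{thm BM}, which says $\diags\cong\calc^2$ of type $\mathbb{A}_{N-2}$ for a $2N$-gon $S$, yields the desired triangle equivalence $\scmp\,B\cong\diags$.

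Concretely, the steps in order would be: (1) write down $B=\textup{Jac}(Q,W)$ for the $N$-cycle and check it is selfinjective, identifying the indecomposable projectives-injectives and the syzygy functor $\Omega$ as a rotation; (2) conclude $\scmp\,B=\underline{\textup{mod}}\,B$; (3) compute the AR-quiver of $\underline{\textup{mod}}\,B$ and recognize it as the translation quiver $(\zG,R^{-2})$ of $2$-diagonals in $S$, with $\Omega$ corresponding to the rotation $R$ and $\Omega^2=\tau^{-1}$ matching $R^2$; (4) check that the mesh relations on both sides agree, so the two mesh categories are equivalent; equivalently, factor through Theorem~\ref{thm BM} and the identification $\underline{\textup{mod}}\,B\cong\calc^2_{\mathbb{A}_{N-2}}$.

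The main obstacle I expect is step (3)--(4): matching the AR-structure precisely, i.e.\ verifying that irreducible maps in $\underline{\textup{mod}}\,B$ correspond bijectively to $2$-pivots and that the mesh relations coincide, rather than merely matching the number of indecomposables and the shift. One must be careful that the equivalence is triangulated and not just $\kb$-linear; the cleanest way to secure this is to route everything through the $2$-cluster category using Theorem~\ref{thm BM} and a reference identification of $\underline{\textup{mod}}\,B$ with $\calc^2$ of type $\mathbb{A}_{N-2}$, so that the triangulated structure is transported automatically. The bookkeeping of vertex labels (that a $2$-diagonal of the $2N$-gon has endpoints of opposite parity, matching the two "halves'' of an indecomposable $B$-module) is routine but needs to be written carefully.
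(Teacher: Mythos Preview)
Your proposal is correct and follows the same overall arc as the paper: identify $B$ as the selfinjective cluster-tilted algebra of type $\mathbb{D}_N$, deduce $\scmp\,B=\underline{\textup{mod}}\,B$, and then match this stable category with $\diags$. The difference is in how the final matching is executed. You propose to route through the 2-cluster category of type $\mathbb{A}_{N-2}$ abstractly (via Theorem~\ref{thm BM} plus a cited identification $\underline{\textup{mod}}\,B\cong\calc^2_{\mathbb{A}_{N-2}}$), or alternatively to compute the AR-quiver of $\underline{\textup{mod}}\,B$ directly and recognize it. The paper instead uses the geometric model of the cluster category of type $\mathbb{D}_N$ as tagged arcs in a once-punctured $N$-gon $P$: the cluster-tilting object $T$ is the fan of plain arcs to the puncture, the projectives are the notched arcs, and hence $\underline{\textup{mod}}\,B$ is realized by the set $\textup{Arcs}_\partial P$ of arcs joining two boundary vertices. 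This yields an explicit bijection $\chi\colon(a,b)\mapsto(a^-,b^+)$ to the 2-diagonals of the $2N$-gon with vertices labeled $1^+,1^-,\ldots,N^+,N^-$, and one checks directly that $\chi$ induces an isomorphism of AR-quivers. The paper's route is more concrete and self-contained (no external identification with $\calc^2$ is invoked), and it resolves exactly the obstacle you flagged in step~(3)--(4) by writing down the bijection explicitly rather than appealing to a classification.
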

\begin{proof}
 The algebra $B$ is cluster-tilted of type $\mathbb{D}_N$. Thus $B=\End_\calc(T)$, where $\calc$ is the cluster category of type $\mathbb{D}_N$ and $T$ is a cluster-tilting object in $\calc$, see \cite{BMRRT}. The cluster category is equivalent to the category of tagged arcs in the punctured polygon $P$ with $N$ vertices \cite{S}. Such an arc is determined by its endpoints if we agree that the arcs $(a,b)$ that connect a boundary point $a$ to a boundary point $b$ go counterclockwise around the puncture, see Figure~\ref{figchi} for an example, and the arcs  that go from a boundary point $a$ to the puncture $p$ come in pairs, a plain arc $(a,p)$ and a notched arc $(a,p)^\bowtie$.
 
   The cluster-tilting object $T$ corresponds to the triangulation consisting of the $N$ plain arcs $(a,p)$ that are incident to the puncture, and by \cite{BMR}, the module category $\textup{mod}\,B$ is equivalent to the category of all arcs that are not in $T$.  
 
 Moreover the algebra $B$ is self-injective and therefore every $B$-module is a syzygy. Thus the stable category $\scmp\, B$ is equal to the stable module category. In the geometric model, the projective modules correspond to the notched arcs $(a,p)^\bowtie$ incident to the puncture $p$. Thus the stable category $\scmp\, B$  corresponds to the category of all arcs $(a,b)$ where $a,b$ are boundary vertices in $P$. We denote this set of arcs by $\textup{Arcs}_\partial P$. Thus
 \[\textup{Arcs}_\partial P =\{(a,b)\mid 1\le a,b\le N,\  a\ne b,\  a+1\nequiv b \pmod N\}.\]
 The irreducible morphisms are given by pivots and the Auslander-Reiten quiver is illustrated in Figure~\ref{fig App} in the case $N=5$.
 \begin{figure}
\begin{center}
\scalebox{0.8}{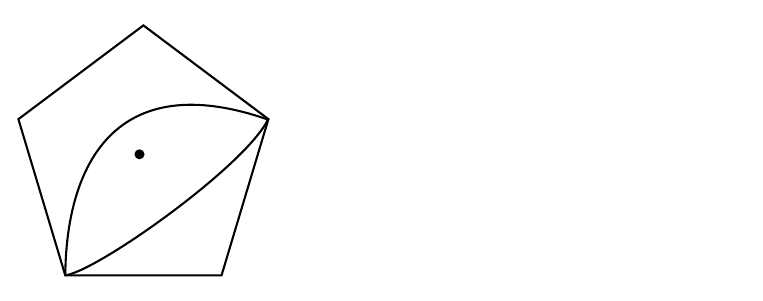}
\caption{The the arcs $(1,3)$ and $(3,1)$ in the punctured 5-gon $P$ and their images $(1^-,3^+)$ and $(3^-, 1^+)$ under the bijection $\chi$ the the 10-gon $\cals$.}
\label{figchi}
\end{center}
\end{figure}

 \begin{figure}
\begin{center}
\small\[\xymatrix@R10pt@C10pt{
(5,4)\ar[dr]&&(1,5)\ar[dr]&&(2,1)\ar[dr]&&(3,2)\ar[dr]&&(4,3)\ar[dr]&&(5,4)
\\
&(1,4)\ar[ru]\ar[rd]&&(2,5)\ar[ru]\ar[rd]&&(3,1)\ar[ru]\ar[rd]&&(4,2)\ar[ru]\ar[rd]&&(5,3)\ar[ru]\ar[rd]
\\
(1,3)\ar[ru]&&(2,4)\ar[ru]&&(3,5)\ar[ru]&&(4,1)\ar[ru]&&(5,2)\ar[ru]&&(1,3)}
\]
\caption{The Auslander-Reiten quiver of $\scmp\,B$ in terms of the arcs that start and end at the boundary in a punctured 5-gon (top) and the Auslander-Reiten quiver of $\diag$ (bottom).}
\label{fig App}
\end{center}
\end{figure}
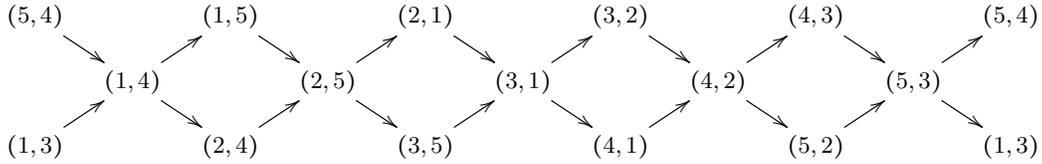

Let $S$ be the polygon with $2N$ vertices labeled $1^+,1^-,2^+,2^-,\dots,n^+,n^-$ in counterclockwise order around the boundary, see Figure~\ref{figchi}. Then there is a bijection  
\[
\begin{array}{ccc}
\chi\colon\textup{Arcs}_\partial P&\longrightarrow &
\textup{Diag}(S)\\
 (a,b)&\longmapsto& (a^-,b^+).
\end{array}
\]
Indeed, $\chi$ is well-defined, since $(a^-,b^+)$ is a 2-diagonal, $\chi$ is clearly injective, and if $(a^-,b^+)$ is a 2-diagonal in $S$, then $ b>a+1$ and hence there exists an arc $(a,b) \in \textup{Arcs}_\partial P$ and thus $\chi$ is surjective.

Furthermore, the map $\phi$ induces an isomorphism between the Auslander-Reiten quivers of $\scmp\,B$ and $\textup{Diag}(S)$. This completes the proof.
\end{proof}

\end{document}